\documentclass{siamart220329}

\usepackage{fullpage,amsmath,amsfonts,amssymb,mathrsfs,hyperref}
\usepackage{mathtools}
\usepackage{makecell}
\usepackage{todonotes}
\usepackage{xcolor}
\usepackage{tikz}
\usepackage{tikz-cd}
\usepackage{pgf}
\usepackage{fancyvrb}   
\usepackage{booktabs}

\numberwithin{equation}{section}
\hyphenation{semi-stable} 

\definecolor{forestgreen(traditional)}{rgb}{0.0, 0.27, 0.13}
\definecolor{forestgreen(web)}{rgb}{0.13, 0.55, 0.13}
\definecolor{airforceblue}{rgb}{0.36, 0.54, 0.66}
\definecolor{bleudefrance}{rgb}{0.19, 0.55, 0.91}
\definecolor{darkorchid}{rgb}{0.6, 0.2, 0.8}
\definecolor{darkorange}{rgb}{1.0, 0.55, 0.0}
\definecolor{darkspringgreen}{rgb}{0.09, 0.45, 0.27}

\hypersetup{
  colorlinks,
  citecolor=forestgreen(traditional),
  linkcolor=magenta,
  urlcolor=airforceblue}



\def\CC{{\mathbb C}}

\def\P{{\mathbb P}}

\def\QQ{{\mathbb Q}}
\def\RR{{\mathbb R}}
\def\R{{\mathbb R}}
\def\ZZ{{\mathbb Z}}

\def\kk{{\Bbbk}}
\def\LT {{\mathrm{LT}}}


\DeclareMathOperator{\Supp}{Supp} 

\DeclareMathOperator{\rank}{rank}

\def\ds{\displaystyle}

\newcommand \ti[1]{\textit{#1}}

\newsiamremark{remark}{Remark}
\newsiamremark{example}{Example}


\newcommand\HF{\mathrm{HF}}
\setcounter{MaxMatrixCols}{20}

\begin{document}
\title{A new bound for the Waring rank of monomials
\thanks{\funding{The first author was supported by the National Research Foundation of Korea (NRF) grant funded by the Korea government (MSIT) (no. 2017R1E1A1A03070765) 
and the second author was supported by a KIAS Individual Grant (MG083101) at Korea Institute of Advanced Study.}}}

\author{Kangjin Han \thanks{School of Undergraduate Studies,
Daegu-Gyeongbuk Institute of Science \& Technology (DGIST),
Daegu 42988,
Republic of Korea(\email{kjhan@dgist.ac.kr}).}
\and Hyunsuk Moon\thanks{Korea Institute for Advanced Study (KIAS), Seoul 02455, Republic of Korea
(\email{hsmoon87@kias.re.kr, mhs@kaist.ac.kr})}}
\maketitle


\begin{abstract}
In this paper we consider the Waring rank of monomials over the real and the rational numbers. We give a new upper bound for it by establishing a way in which one can take a structured apolar set for any given monomial $X_0^{a_0}X_1^{a_1}\cdots X_n^{a_n}$ ($a_i>0$). This bound coincides with the real Waring rank in the case $n=1$ and in the case $\min(a_i)=1$, which are all the known cases for the real rank of monomials. Our bound is also lower than any other known general bounds for the real Waring rank. Since all of the constructions are still valid over the rational numbers, this provides a new result for the rational Waring rank of any monomial as well. Some examples and computational implementation for potential use are presented in the end.
\end{abstract}

\begin{keywords}
Real Waring rank, Rational Waring rank, Waring decomposition, Monomial, Real hyperbolic polynomial
\end{keywords}
\begin{MSCcode}
14P99, 12D05, 13P10, 14A25, 15A21, 14N15
\end{MSCcode}
\tableofcontents \setcounter{page}{1}

\section{Introduction}\label{intro}

A Waring decomposition of a polynomial is an expression of the polynomial as a sum of powers of linear forms. We study the rank of monomials over the reals and the rationals and we give an improved upper bound for it. 

Let $\kk$ be a field, $S=\kk[X_0,\ldots ,X_n]=\bigoplus_{d\ge0} S_d$ be the ring of polynomials over $\kk$ and $S_d$ be the $\kk$-vector space of homogeneous polynomials (or forms) of degree $d$.
For a given $F \in S_d$, we define a Waring decomposition of $F$ over $\kk$ as a sum 
\begin{equation}\label{waring_decomp}
F=\sum_{i=1}^{r} \lambda_i L_i^d~,
\end{equation}
where $\lambda_i \in \kk$ and $L_i$ is a linear form over $\kk$. The smallest number $r$ for which such a decomposition exists is called \textit{Waring rank of $F$ over $\kk$} and we denote it by $\rank_\kk(F)$.

Earlier studies of Waring decomposition and Waring rank, initiated by works of Sylvester and others, go back to the 19th century (see \cite{IK} for a historical background). But, despite their long history, the Waring ranks for \textit{general} forms over the complex numbers, a long-standing conjecture in this field, were determined only in the 1990s by \cite{AH} and the complex Waring rank of monomials, a specific type of polynomial, has been found in recently \cite{BBT, CCG}.
 
In general, it is known that it is very difficult to compute the rank of a form except some known cases, even though some algorithms have been proposed (see e.g. \cite{BGI} and references therein).

The Waring rank over the \textit{real} numbers is even more difficult to compute. For instance, the real Waring rank of a monomial  $X_0^{a_0}X_1^{a_1}\cdots X_n^{a_n}$ with $a_i>0$ is its degree when $n=1$ \cite{BCG} and $\frac{1}{2}\prod_{i=0}^n(a_i+1)$ when $\min(a_i)=1$ \cite{CKOV}. In \cite{CKOV}, there is also an upper bound for the real rank of any monomial $\frac{1}{2a_j}\prod_{i=0}^n(a_i+a_j)$ where $a_j$ is $\min(a_i)$, but it is not tight in general.

We give a new upper bound for the real Waring rank of any monomial as follows:

\begin{theorem}\label{main_thm}
The real Waring rank of a monomial $X_0^{a_0}X_1^{a_1}\ldots X_n^{a_n}$ with each $a_i>0$ is at most
$$ \frac{1}{2}\bigg\{\prod_{i=0}^n(a_i+1)-\prod_{i=0}^n(a_i-1)\bigg\}~.$$
\end{theorem}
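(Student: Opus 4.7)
The natural strategy is apolarity. By the apolarity lemma for real Waring decompositions, to show $\rank_\RR(F)\le r$ it suffices to exhibit a zero-dimensional reduced subscheme $Z\subset \PP^n_\RR$ of $r$ real points whose homogeneous ideal is contained in the apolar ideal
\[
F^\perp=(x_0^{a_0+1},x_1^{a_1+1},\ldots,x_n^{a_n+1})\subset T=\RR[x_0,\ldots,x_n].
\]
The entire problem thus reduces to building a structured real apolar set of cardinality $r=\tfrac{1}{2}\{\prod_i(a_i+1)-\prod_i(a_i-1)\}$.

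I would construct $Z$ by induction on the number of variables $n$. For $n=1$ one recovers the identity $\rank_\RR(X_0^{a_0}X_1^{a_1})=a_0+a_1$: a set of $a_0+a_1$ real points in $\PP^1$ is cut out by a single degree-$(a_0+a_1)$ binary form whose only constraint for lying in $(x_0^{a_0+1},x_1^{a_1+1})$ is the vanishing of the $x_0^{a_0}x_1^{a_1}$-coefficient, and such a form with all roots real and distinct is easy to produce (pick real numbers whose $a_0$-th elementary symmetric function vanishes). For the inductive step, starting from a structured apolar set $Z'\subset \PP^{n-1}_\RR$ for the monomial in the first $n$ variables, I would extend each point of $Z'$ into the new coordinate direction using real roots of a carefully chosen hyperbolic polynomial whose degree and coefficients are forced by membership in $F^\perp$. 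Only the ``boundary'' extensions survive as real points, while ``interior'' ones come in complex conjugate pairs and are discarded.

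The count matches because $\prod_i(a_i+1)-\prod_i(a_i-1)$ equals the number of lattice points on the boundary of the integer box $[0,a_0]\times\cdots\times[0,a_n]$; under the hypothesis $a_i>0$ for each $i$, the antipodal involution $(k_i)\mapsto(a_i-k_i)$ acts freely on this boundary, and the resulting $r$ orbits are intended to index precisely the real linear forms produced by the construction.

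The main obstacle is to produce, at each inductive step, an explicit real polynomial in $F^\perp$ of the required degree that splits completely into distinct real linear factors. This is a constrained hyperbolicity problem: the linear conditions imposed by membership in $F^\perp$ must be compatible with a hyperbolic polynomial of the prescribed degree, and one must exhibit explicit coefficients witnessing this compatibility. Propagating the containment $I_Z\subset F^\perp$ and the reducedness of $Z$ through the induction, while making the ``boundary orbit'' interpretation into a rigorous bijection, is where the bulk of the technical work will lie.
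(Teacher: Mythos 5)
Your framework (apolarity plus a structured real apolar set) and your count are both the right ones --- indeed $\prod_i(a_i+1)-\prod_i(a_i-1)$ is the number of boundary lattice points of the box and the paper's set of points $[\pm t^{b_0}:\cdots:\pm t^{b_n}]$ realizes exactly this cardinality. But the proposal stops precisely where the actual difficulty begins, and the one concrete mechanism you do propose is the step that breaks. The Apolarity Lemma needs the \emph{full homogeneous ideal} $I_Z$ of your real point set to lie in $F^\perp$, not merely that $Z$ is the common real zero locus of some forms taken from $F^\perp$. Your inductive device --- extend each point of $Z'$ by the roots of a constrained hyperbolic polynomial and ``discard'' the extensions that come in complex conjugate pairs --- is exactly the move that destroys this: once complex common zeros are thrown away, the vanishing ideal of the surviving real points is strictly larger than the ideal you constructed, and there is no reason for it to stay inside $F^\perp$. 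The paper flags this failure mode explicitly: the ideal $(x_0^3x_1-x_0x_1^3,\,4x_0^3x_2-x_0x_2^3,\,x_1^3x_2-9x_1x_2^3)$ sits inside $(x_0^3,x_1^3,x_2^3)$ and cuts out $9$ real points, but its saturation acquires $x_0x_1x_2$, which is \emph{not} apolar, so the Apolarity Lemma cannot be invoked; similarly $\langle (x-2y)(2x+y)(x^2+y^2)\rangle$ is not the ideal of its two real points. So ``only boundary extensions survive, interior ones are discarded'' cannot be a step in the proof; the construction must be arranged so that \emph{all} common zeros are already real and reduced.

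The paper does this by choosing generators with a symmetric root pattern, $G_{i,j}^{\mathbf a}(t)$ built from factors $(x_i-t^kx_j)(x_i+t^kx_j)$ for $k$ in a prescribed symmetric range, so that $V(J_{\mathbf a}(t))$ consists entirely of real points of the predicted number. Even then, the containment $I_Z\subset F^\perp$ is not automatic (the ideal is far from a complete intersection once $n\ge 2$), and the bulk of the paper is devoted to proving $J_{\mathbf a}(t)=I_Z$: one computes the initial ideal of $J_{\mathbf a}(t)$ in grevlex by exhibiting, via subresultant manipulations, explicit elements whose leading terms generate a candidate monomial ideal $M_{\mathbf a}$ indexed by lattice points of a simplex, computes $\deg M_{\mathbf a}$ by Stanley decompositions, matches it with $|V(J_{\mathbf a})|$ to force $in(J_{\mathbf a})=M_{\mathbf a}$, and then reads off saturatedness of $J_{\mathbf a}(t)$ from the structure of this initial ideal; Proposition \ref{Prop_FromSatToRad} then upgrades saturated plus correct Hilbert function to $J_{\mathbf a}(t)=I_Z$. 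Your write-up acknowledges that ``propagating the containment $I_Z\subset F^\perp$ and the reducedness of $Z$'' is where the work lies, but offers no method for it; since that is the theorem's entire content beyond the (correct) counting heuristic, the proposal as it stands has a genuine gap rather than an alternative proof.
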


We would like to note that this bound contains the two known cases of real rank, when $n=1$ and one $a_i$ is $1$, and is lower than the general bounds of \cite{CKOV} and of \cite{BT15} (see the discussion in Remark \ref{better bound} and Table \ref{compareTable}). Moreover, the same bound still holds for the rank over $\mathbb{Q}$, the field of rational numbers (see Remark \ref{Qrank}).

The paper is structured as follows. In Section \ref{sect_prelim}, we recall some preliminaries on the subject and introduce our notation. Our basic strategy to obtain the upper bound is to use the Apolarity Lemma. But, to invoke the Apolarity Lemma, one should show that a given set of equations vanishing on  the desired set of points indeed defines the points \ti{ideal-theoretically}. This is in general quite difficult except in some immediate cases such as complete intersection. That is the reason why we have a rather lengthy Section \ref{sect_upp_bd}. In that section, we prove our main result by introducing a family of real hyperbolic polynomials and investigating structures of the ideal generated by those polynomials step by step. Finally, we provide a \textsc{Macaulay2} code for computing a real apolar set and a real Waring decomposition of any given monomial via the method developed in this article as a supplementary file.

\bigskip

\paragraph*{\textbf{Acknowledgements}} The first author is grateful to J. Buczy\'nski and the Institute of Mathematics
Polish Academy of Sciences in Warsaw for inviting him to the workshop `Varieties and Group actions' in 2018 where this project was initiated. He would also like to thank Bernd Sturmfels for his stimulating question at tea time. Both authors are grateful to referees and editors for comments and suggestions to improve the exposition. 

\section{Preliminaries and notations}\label{sect_prelim}

From now on, let $\kk$ be the field of real numbers $\R$ and $S=\R[X_0,\ldots,X_n]$. Let $T=\R[x_0,\ldots,x_n]$ be the dual ring of $S$ which acts on $S$ by the differentiation $x_i\circ X_j:=\partial_i(X_j)$. For any $F\in S_d$ (the degree $d$ piece of $S$), $F^\perp=\{ \Psi\in T : \Psi\circ F=0\}$ is a homogeneous ideal in $T$. We call this $F^\perp$ the \textit{apolar ideal} (or \textit{annihilating ideal}) of $F$. The quotient ring $T/F^\perp$ is called the \textit{apolar algebra} of $F$.

A set of reduced points $\mathbb{X}\subset\P(S_1)=\mathbb{P}^n$ is called an \textit{apolar set of $F$} (or \textit{apolar to $F$}) if the (saturated and radical) defining ideal $I_\mathbb{X}$ of $\mathbb{X}$ is contained in $F^\perp$. A key ingredient to study Waring rank and decomposition of $F$ is the Apolarity Lemma (see e.g. \cite[section 1.1]{IK} for proof). We state the lemma here over the reals.

\begin{lemma}[Apolarity Lemma]\label{real apolarity}
Let $F$ be any form of degree $d$ in $S$. Then, there exist scalars $\lambda_1,\ldots, \lambda_r$ in $\R$ such that $F=\sum_{i=1}^{r} \lambda_i L_i^d$ with $L_i=c_{i0}X_0+\cdots +c_{in}X_n$ ($c_{ij}\in\R$) if and only if $I_\mathbb{X}\subset F^\perp$ (i.e. $\mathbb{X}$ is apolar to $F$), where $\mathbb{X}=\{[c_{10}:\cdots:c_{1n}],\cdots,[c_{r0}:\cdots:c_{rn}]\}$ is a set of reduced real points in $\P(S_1)$.
\end{lemma}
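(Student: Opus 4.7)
The plan is to exploit the non-degenerate apolarity pairing
$$
T_d \times S_d \longrightarrow \R, \qquad (\Psi, G) \mapsto \Psi \circ G,
$$
which is perfect because the monomial bases of $T_d$ and $S_d$ pair to scaled Kronecker deltas under iterated differentiation. The computation driving both directions is the identity
$$
\Psi \circ L^d \;=\; \frac{d!}{(d-k)!}\,\Psi(c_0,\dots,c_n)\,L^{d-k}
$$
for $L = c_0 X_0 + \cdots + c_n X_n$ and $\Psi \in T_k$ with $k \le d$ (the action vanishing for $k > d$), which I would derive by induction from $x_i \circ L = c_i$.

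For the forward implication, I would assume $F = \sum_{i=1}^r \lambda_i L_i^d$ and take an arbitrary homogeneous $\Psi \in I_\mathbb{X}$ of degree $k$. If $k > d$ then $\Psi \circ F = 0$ for degree reasons; otherwise the displayed identity presents $\Psi \circ F$ as a linear combination of terms each proportional to $\Psi(c_{i0},\dots,c_{in})$, all of which vanish since $\Psi$ vanishes on $\mathbb{X}$. Hence $I_\mathbb{X} \subset F^\perp$.

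For the converse, I would set $V := \sspan_\R\{L_1^d,\dots,L_r^d\} \subset S_d$, where $L_i = c_{i0}X_0 + \cdots + c_{in}X_n$ is the linear form attached to a chosen affine representative of $p_i \in \mathbb{X}$. The same identity shows that the annihilator of $V$ inside $T_d$ is exactly the subspace of degree-$d$ forms vanishing at all $(c_{i0},\dots,c_{in})$; by reducedness of $\mathbb{X}$ (so that $I_\mathbb{X}$ is the intersection of the maximal ideals of the points), this subspace is precisely $(I_\mathbb{X})_d$. Dualizing the hypothesis $(I_\mathbb{X})_d \subset (F^\perp)_d$ via the identity $W^{\perp\perp} = W$, valid for any subspace of a finite-dimensional space under a non-degenerate pairing, then gives
$$
\sspan_\R(F) \;=\; ((F^\perp)_d)^\perp \;\subset\; ((I_\mathbb{X})_d)^\perp \;=\; V,
$$
whence $F = \sum_{i=1}^r \lambda_i L_i^d$ for some scalars $\lambda_i \in \R$, as desired.

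The only step requiring genuine care is the identification $V^\perp = (I_\mathbb{X})_d$ in $T_d$, since it is here that reducedness of $\mathbb{X}$ enters and one must verify that no subtlety is introduced by working over $\R$ rather than $\CC$. The content is that a degree-$d$ form vanishes on all chosen affine representatives of $r$ distinct $\R$-points if and only if it lies in the degree-$d$ part of the intersection of the maximal ideals of those points, which holds over any field; consequently the argument transfers verbatim from the classical complex Apolarity Lemma.
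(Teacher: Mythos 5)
Your proof is correct, but it takes a different route from the paper. The paper does not argue the lemma from scratch: it cites the classical Apolarity Lemma over $\CC$ from \cite[section 1.1]{IK} and then, in Remark \ref{existence_real_decomp}, obtains real (indeed $\kk$-rational) coefficients $\lambda_i$ by the observation that a linear system $A\mathbf{x}=\mathbf{b}$ with entries in $\kk$ that is solvable over $\CC$ is solvable over $\kk$. You instead give a self-contained proof over $\R$ via the differentiation pairing: the identity $\Psi\circ L^d=\frac{d!}{(d-k)!}\Psi(c_0,\dots,c_n)L^{d-k}$ gives the forward direction immediately, and for the converse you identify the annihilator of $V=\sspan_\R\{L_1^d,\dots,L_r^d\}$ inside $T_d$ with $(I_\mathbb{X})_d$ (which is tautologically the space of degree-$d$ forms vanishing at the chosen representatives, since $I_\mathbb{X}$ is the vanishing ideal of the real points) and conclude $F\in((I_\mathbb{X})_d)^\perp=V$ by the double-annihilator property of the perfect pairing $T_d\times S_d\to\R$. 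What your argument buys is transparency about the field: the pairing is already non-degenerate over $\R$, only the degree-$d$ graded piece of $I_\mathbb{X}$ is ever used, and no base change to $\CC$ or descent step is needed; what the paper's route buys is brevity, delegating the substance to the literature and isolating the only genuinely field-dependent issue (rationality of the $\lambda_i$) in a one-line linear-algebra remark. A cosmetic point only: in your last display the equality $\sspan_\R(F)=((F^\perp)_d)^\perp$ is true (for $F\neq0$ the kernel of $\Psi\mapsto\Psi\circ F$ has codimension one), but all you need is the containment $F\in((F^\perp)_d)^\perp$, which is immediate from the definition of $F^\perp$.
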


\begin{remark}\label{existence_real_decomp}
In \cite{IK}, The Apolarity Lemma is originally stated over an algebraically closed field. Let $\kk$ be any subfield of $\mathbb{C}$. We can always find the coefficients $\lambda_i$ in $\kk$, once the given $F$ is a form with coefficients in $\kk$ and we restrict ourselves to a set $\mathbb{X}$ of $\kk$-points, because a system of linear equations $A\mathbf{x}=\mathbf{b}$ in which the scalars $b_k$ and the entries $A_{ij}$ are in $\kk$ has a solution indeed in $\kk$ whenever it has a solution in $\CC$. The existence of $\lambda_i$ among the complex numbers is given by the classical Apolarity Lemma.
\end{remark}

%

For any graded $\kk$-algebra $R=\bigoplus_{d\in\mathbb{Z}_{\geq 0} } R_d$, Hilbert function $\HF(R,d)$ is defined as $\dim_{\kk}R_d$. Throughout the paper, for any homogeneous ideal $I\subset T$ we use definitions for (co)dimension and degree of $I \subset T$ by the notions coming from the associated Hilbert polynomial of a $\R$-algebra $T/I$.

For later use, we state

\begin{proposition}\label{Prop_FromSatToRad}
Suppose that $J$ is a saturated ideal in $T$ and $\mathbb{X}:=V(J)$ are distinct $\R$-points in $\P(S_1)$. If $\HF(T/J,d)=|\mathbb{X}|$ for all $d\gg0$, then $J=I_{\mathbb{X}}$ (in particular, $J$ is a radical ideal in $T$).
\end{proposition}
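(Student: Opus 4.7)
The plan is to deduce the equality of ideals $J = I_{\mathbb{X}}$ from three inputs: the set-theoretic containment $J \subseteq I_{\mathbb{X}}$, the standard Hilbert-function value for a reduced finite point set, and the saturatedness of $J$.

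First, since $\mathbb{X} = V(J)$, every element of $J$ vanishes at every point of $\mathbb{X}$ and therefore lies in the reduced vanishing ideal $I_{\mathbb{X}}$; this gives a graded surjection $T/J \twoheadrightarrow T/I_{\mathbb{X}}$. Next, I would invoke the classical observation that for $|\mathbb{X}|$ distinct reduced points in $\mathbb{P}^n$ the evaluation map $T_d \to \mathbb{R}^{|\mathbb{X}|}$ is surjective for all large $d$, so $\mathrm{HF}(T/I_{\mathbb{X}},d) = |\mathbb{X}|$ for $d \gg 0$. Combining this with the hypothesis $\mathrm{HF}(T/J,d) = |\mathbb{X}|$ for $d \gg 0$, the surjection $(T/J)_d \twoheadrightarrow (T/I_{\mathbb{X}})_d$ is an equality of finite-dimensional real vector spaces in every sufficiently large degree, whence $J_d = (I_{\mathbb{X}})_d$ for $d \gg 0$.

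To lift this high-degree equality to an equality of ideals, I would use the saturatedness of $J$ with respect to the irrelevant ideal $\mathfrak{m} = (x_0,\ldots,x_n)$. For any $f \in I_{\mathbb{X}}$ and any $e$ sufficiently large, $x_i^e f \in (I_{\mathbb{X}})_{\deg f + e} = J_{\deg f + e}$ for each $i$, so $\mathfrak{m}^{N} f \subseteq J$ for some $N$. Saturatedness of $J$ then forces $f \in J$, giving the reverse containment $I_{\mathbb{X}} \subseteq J$ and hence $J = I_{\mathbb{X}}$, which in particular is radical.

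The argument is essentially the standard \emph{saturation plus Hilbert-function matching} pattern and presents no serious obstacle. The only point worth flagging is that the reality hypothesis plays no role in the Hilbert function computation: since every point of $\mathbb{X}$ is $\mathbb{R}$-rational, $I_{\mathbb{X}} \subset T$ is a genuine ideal of the real polynomial ring, and its Hilbert function stabilizes to $|\mathbb{X}|$ exactly as over any extension field.
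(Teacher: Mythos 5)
Your proposal is correct and follows essentially the same route as the paper: establish $J\subseteq I_{\mathbb{X}}$, compare Hilbert functions in large degrees (using that $\HF(T/I_{\mathbb{X}},d)=|\mathbb{X}|$ for $d\gg0$ since the points are reduced and $\R$-rational) to get $J_d=(I_{\mathbb{X}})_d$ for $d\gg0$, and then use saturatedness of $J$ with respect to $\mathfrak{m}=(x_0,\ldots,x_n)$ to pull every element of $I_{\mathbb{X}}$ back into $J$. No gaps; the paper's proof is the same argument in slightly more condensed form.
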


\begin{proof}
Obviously, $J\subseteq I_{V(J)}(=I_{\mathbb{X}})$. For all $d\gg0$, $\HF(J,d)=\HF(I_{\mathbb{X}},d)=\dim~T_d-|\mathbb{X}|$, so there exists $d_0$ such that $(J)_d=(I_{\mathbb{X}})_d$ for all $d\ge d_0$. Let $\mathbf{m}=(x_0,x_1,\ldots,x_n)$, the irrelevant maximal ideal in $T$. For any $g\in (I_{\mathbb{X}})_d$ for $d<d_0$, $\mathbf{m}^k\cdot g\in (I_{\mathbb{X}})_{\ge d+k}=(J)_{\ge d+k}$ for a sufficiently large $k$. Since $J$ is saturated, $g\in(J)_d$, which implies $(I_{\mathbb{X}})_d\subseteq(J)_d$. Thus, we have $J=I_{\mathbb{X}}$.
\end{proof}

\begin{remark}\label{sat_not_enough_overR}
Over $\R$, under the assumption that $J$ is saturated and $V(J)$ consists of distinct points in $\P_{\R}^n$, we need the condition `$\HF(T/J,d)=|\mathbb{X}|$ for all $d\gg0$' in Proposition \ref{Prop_FromSatToRad} to guarantee that $J$ is the (vanishing) ideal of the real points which $J$ defines. Indeed, consider an ideal $J=\langle (x-2y)(2x+y)(x^2+y^2) \rangle$ in $T=\R[x,y]$. $J$ is saturated (and radical) in $T$ and defines $\mathbb{X}=\{[2:1],[1:-2]\}$ over $\R$, but $J\neq I_{\mathbb{X}}=\langle (x-2y)(2x+y) \rangle$.

Note also that Hilbert function of any $T$-module $M$ does not change under the base extension $\R\hookrightarrow \CC$. In other words, it holds that $\HF(M,d)=\HF(M\otimes_\R \CC,d)$  for every $d$ (see e.g. \cite[corollary 5.1.20]{KR1}). Being saturated (or radical) also does not change under the extension.
\end{remark}

We also recall the notion of Stanley decomposition of an $\R$-algebra $T/I$ and some basic results on it (see e.g. \cite{SW} for more details).

\begin{definition}\label{Stanley decomposition}
Let $I$ be an ideal in $T$ and $R=T/I$. A \textit{Stanley decomposition of $R$} is a representation of $R$ as the (internal) direct sum of $\R$-vector spaces $R=\bigoplus_{\alpha\in A} \mathbf{x}^\alpha~\mathbb{R}[\sigma_\alpha]$, where $A$ is a finite subset of $\mathbb{Z}_{\geq 0}^{n+1}$, $\mathbf{x}^\alpha$ denotes, by abuse of notation, the image in $R$ of monomial $x_0^{\alpha_0}x_1^{\alpha_1}\cdots x_n^{\alpha_n}$ for $\alpha=(\alpha_0,\alpha_1,\ldots,\alpha_n)$, and $\sigma_\alpha$ is a (possibly empty) subset of the variables $\{x_0,x_1,\ldots,x_n\}$. 
\end{definition}


\begin{proposition}\label{Stanley}
For any homogeneous ideal $I\subset T$, $T/I$ admits a Stanley decomposition. Further, for a given Stanley decomposition $T/I=\bigoplus_{\alpha\in A} \mathbf{x}^\alpha~\mathbb{R}[\sigma_\alpha]$, Hilbert function of $T/I$ is given by 
\begin{equation}\label{hilb_for}
\HF(T/I,t)=\sum_{\alpha\in A} \HF(\mathbb{R}[\sigma_{\alpha}],t-|\alpha|)\quad\textrm{where $|\alpha|=\sum_i \alpha_i$.}
\end{equation}
Finally, for any $t>\max_{\alpha\in A}\{|\alpha|\}$, $\HF(T/I,t)$ agrees with a polynomial function, which is Hilbert polynomial of $T/I$.
\end{proposition}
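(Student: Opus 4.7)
The plan is to prove the three assertions in order, reducing everything to the monomial-ideal case where the combinatorial structure becomes transparent.

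\textbf{Existence.} First I would fix any term order $\prec$ on $T$ and compute a Gr\"obner basis of $I$; the standard monomials (those not lying in the initial ideal $M := \LT_\prec(I)$) form an $\R$-basis of $T/I$, so it suffices to partition this set of monomials into blocks of the form $\{x^\alpha \cdot x^\beta : \mathrm{supp}(\beta) \subseteq \sigma_\alpha\}$. I would carry this out by induction on the number of variables: for a monomial ideal $M \subset \R[x_0,\ldots,x_n]$ one slices the standard monomials according to the exponent of $x_n$ they carry, and each slice reduces to a monomial-ideal problem in $\R[x_0,\ldots,x_{n-1}]$ to which the inductive hypothesis applies. Stitching the resulting partial decompositions together produces a Stanley decomposition of $T/M$, hence of $T/I$ as graded $\R$-vector spaces. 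This step is the main technical obstacle, although it is a standard combinatorial fact that can also be extracted from the theory of standard pairs of monomial ideals.

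\textbf{Hilbert function formula.} Once a Stanley decomposition $T/I = \bigoplus_{\alpha \in A} x^\alpha \, \mathbb{R}[\sigma_\alpha]$ is in hand, the formula is immediate: Hilbert function is additive on direct sums of graded subspaces, and multiplication by $x^\alpha$ gives an isomorphism of $\R$-vector spaces from $\mathbb{R}[\sigma_\alpha]_{t - |\alpha|}$ onto $(x^\alpha \mathbb{R}[\sigma_\alpha])_t$, so $\HF(x^\alpha \mathbb{R}[\sigma_\alpha], t) = \HF(\mathbb{R}[\sigma_\alpha], t - |\alpha|)$; summing over $\alpha \in A$ then yields \eqref{hilb_for}.

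\textbf{Polynomiality for large $t$.} Recall that $\HF(\mathbb{R}[\sigma], m) = \binom{m + |\sigma| - 1}{|\sigma| - 1}$ for every $m \geq 0$ when $|\sigma| \geq 1$, while $\HF(\mathbb{R}, m) = \delta_{m,0}$ when $\sigma = \emptyset$. Once $t > \max_{\alpha \in A} |\alpha|$, each shifted argument $t - |\alpha|$ is strictly positive, so every empty-$\sigma_\alpha$ contribution vanishes and each nonempty one is given by the binomial expression above, viewed as a polynomial in $t$. Summing these binomials across $\alpha \in A$ therefore produces a polynomial in $t$ that agrees with $\HF(T/I, t)$ throughout this range, which is the Hilbert polynomial of $T/I$.
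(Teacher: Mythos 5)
The paper itself does not prove this proposition; it is quoted as a known fact with a pointer to \cite{SW}, so there is no internal proof to compare against, and your argument has to stand on its own. Your second and third parts do: additivity of the Hilbert function over a direct sum of graded subspaces together with the degree shift by $|\alpha|$ gives \eqref{hilb_for}, and for $t>\max_{\alpha\in A}|\alpha|$ each nonempty $\sigma_\alpha$ contributes the binomial $\binom{t-|\alpha|+|\sigma_\alpha|-1}{|\sigma_\alpha|-1}$, a polynomial in $t$, while the empty ones contribute $0$; since the resulting polynomial agrees with $\HF(T/I,t)$ for all such $t$, it is the Hilbert polynomial. The reduction of the existence statement to the monomial case via Macaulay's basis theorem (standard monomials form a basis of $T/I$, so any partition of them into blocks transfers) is also fine.

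The genuine gap is in the induction for the monomial case. As stated --- slice the standard monomials by the exponent of $x_n$, apply the inductive hypothesis to each slice, stitch --- you get one family of blocks for every exponent $k\ge 0$ of $x_n$, hence infinitely many blocks, whereas Definition~\ref{Stanley decomposition} requires the index set $A$ to be \emph{finite}; the final assertion of the proposition even quantifies over $\max_{\alpha\in A}|\alpha|$, which only exists when $A$ is finite. Moreover, none of the blocks produced this way has $x_n\in\sigma_\alpha$, so they cannot cover the standard monomials with unbounded $x_n$-exponent by finitely many pieces. The repair is standard but must be said: the slice ideals $M_k=\{m\in\R[x_0,\dots,x_{n-1}]: x_n^k m\in M\}$ form an ascending chain, which stabilizes at some $k_0$ by Noetherianity; handle the finitely many slices $k<k_0$ as you describe (with $x_n\notin\sigma_\alpha$), and absorb the entire stable tail $k\ge k_0$ into the finitely many blocks $\mathbf{x}^\alpha x_n^{k_0}\,\R[\sigma_\alpha\cup\{x_n\}]$ obtained from a Stanley decomposition of $\R[x_0,\dots,x_{n-1}]/M_{k_0}$. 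With that modification your existence argument is correct (and is essentially the argument behind the cited reference); without it, the stitching step does not produce a Stanley decomposition in the sense required, and the last claim of the proposition cannot even be formulated.
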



In the proof of Theorem \ref{main_thm}, we need to calculate the initial ideal. For this aim, we recall a lemma about \textit{subresultants} and \textit{polynomial remainder sequences} (basic ideas on this topic can be found in \cite{C}). The following lemma is a simple version of it for our purposes and we provide a proof for completeness.
 
\begin{lemma}\label{prs}
Let $f=a_nx^n+a_{n-1}x^{n-1}+\cdots+a_0$ and $g=b_mx^m+b_{m-1}x^{m-1}+\cdots+b_0$ be two real univariate polynomials with degrees $n, m$ ($n\geq m$). Then, for any $1\leq i\leq m$, there exist polynomials $u_i,v_i$ with degrees at most $m-i,n-i$, respectively, such that $u_if+v_ig$ has degree less than or equal to $i-1$. Moreover, each coefficient of $u_i$, $v_i$ and of $u_i f+v_i g$ can be expressed by the determinant of a submatrix of the Sylvester matrix of $f$ and $g$.
\end{lemma}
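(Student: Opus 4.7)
My plan is to reformulate the statement as a homogeneous linear system in the Sylvester matrix and then apply a Cramer-type cofactor construction. I parametrize $u_i=\sum_{k=0}^{m-i}\alpha_k x^k$ and $v_i=\sum_{\ell=0}^{n-i}\beta_\ell x^\ell$, giving $N:=n+m-2i+2$ scalar unknowns. Demanding $\deg(u_i f+v_i g)\le i-1$ means that the coefficient of $x^j$ in $u_i f+v_i g$ must vanish for every $j$ with $i\le j\le n+m-i$, which is exactly $N-1$ homogeneous linear equations. The coefficient matrix of this system is the $(N-1)\times N$ submatrix $M$ of the $(n+m)\times(n+m)$ Sylvester matrix $\mathrm{Syl}(f,g)$ obtained by taking the columns indexed by the unknowns $\alpha_0,\ldots,\alpha_{m-i},\beta_0,\ldots,\beta_{n-i}$ and the rows indexed by the monomials $x^i,\ldots,x^{n+m-i}$. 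Since $M$ has one more column than rows, $\ker M$ is nontrivial, producing a nonzero pair $(u_i,v_i)$ with the required degree bounds.

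To make every coefficient explicit, I would set the $k$-th entry of a candidate solution to $(-1)^k\det(M^{(k)})$, where $M^{(k)}$ is obtained by deleting the $k$-th column of $M$. A routine Laplace-expansion argument shows that this vector lies in $\ker M$, so every coefficient of $u_i$ and $v_i$ is represented as the determinant of an $(N-1)\times(N-1)$ submatrix of $\mathrm{Syl}(f,g)$. For each coefficient of $u_i f+v_i g$ of degree $j<i$, I would substitute this solution into the row of $\mathrm{Syl}(f,g)$ encoding the coefficient of $x^j$ and expand; by Laplace expansion along that row, the result equals the determinant of the $N\times N$ submatrix obtained by adjoining that row to $M$. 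This delivers the second claim uniformly.

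The chief obstacle is guaranteeing that the cofactor vector above is genuinely nonzero. If at least one $(N-1)\times(N-1)$ minor of $M$ is nonzero, we are done. In the degenerate case where $M$ has rank strictly less than $N-1$, one option is to drop a dependent equation and repeat the construction for a smaller square subsystem, which still preserves the determinantal form. Alternatively, one may invoke the classical subresultant identity (see \cite{C}), which packages these minors uniformly: the coefficients of $u_i$, $v_i$, and the remainder $r_i$ all arise as signed minors of $\mathrm{Syl}(f,g)$ through one canonical $N\times N$ determinant, namely (up to sign) the $(i-1)$-th subresultant polynomial of $f$ and $g$. The remaining work is essentially bookkeeping with signs and indices.
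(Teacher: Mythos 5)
Your proposal is correct and follows essentially the same route as the paper: both set up the condition $\deg(u_if+v_ig)\le i-1$ as a homogeneous linear system whose matrix is a submatrix of the Sylvester matrix, take the vector of signed maximal minors as the solution (giving the coefficients of $u_i,v_i$ as determinants), and obtain each low-degree coefficient of $u_if+v_ig$ as the determinant of the square matrix formed by adjoining the corresponding row. Your worry about the cofactor vector vanishing is not needed for the lemma as stated (it only asserts existence and a determinantal expression); the nonvanishing of the relevant determinant is verified separately in the paper where the lemma is applied.
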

\begin{proof}
For $1\le i\le m$, consider the map $\phi_i : \mathbb{R}[x]_{\le m-i}\times \mathbb{R}[x]_{\le n-i}\rightarrow \mathbb{R}[x]_{\le n+m-i}$, which is given by $\phi_i(A,B)=Af+Bg$. Actually, this map can be understood by the linear transformation $$\tilde{\phi}_i : \mathbb{R}^{m-i+1}\times\mathbb{R}^{n-i+1}\simeq\mathbb{R}^{n+m-2i+2}\rightarrow \mathbb{R}^{n+m-i+1}$$ with a $(n+m-i+1)\times(n+m-2i+2)$-matrix presentation
$$M=\small{\begin{pmatrix} 
a_n 		& 		&		&b_m 	& 		&	\\	
a_{n-1} 	&\ddots 	&		&b_{m-1} 	&\ddots 	&	\\	
\vdots 	&\ddots 	&a_n 	&\vdots 	&\ddots 	&b_m \\
a_0 		& \ddots 	& a_{n-1} 	& b_0 	& \ddots 	& b_{m-1} \\	
		& \ddots 	& \vdots 	& 		& \ddots 	& \vdots \\
		& 		& a_0   	&		& 		& b_0   
\end{pmatrix}}$$
which is a submatrix of the Sylvester matrix of $f$ and $g$. Now, we want to find a solution $\mathbf{x}\in\mathbb{R}^{n+m-2i+2}$ to the equation
$$M\mathbf{x}=\tiny{\begin{pmatrix} 0\\ \vdots \\ 0 \\ \ast \\ \vdots \\ \ast\end{pmatrix}}$$
with the top $n+m-2i+1$ entries being zeros. For the part of top $n+m-2i+1$ rows, the equation becomes $$\widetilde{M}\mathbf{x}=\mathbf{0}$$ where $\widetilde{M}$ is a $(n+m-2i+1)\times (n+m-2i+2)$ submatrix of $M$. Since the difference between the number of rows and the number of columns in $\widetilde{M}$ is $1$, a solution $\mathbf{x}$ can be given by determinants of maximal submatrices of $\widetilde{M}$. In other words, for $0\leq j\leq n+m-2i+1$, if $\widetilde{M}^j$ is the submatrix of $\widetilde{M}$ by removing $(j+1)$-th column, then $$\mathbf{x}=\ds\big[(-1)^j\det(\widetilde{M}^j)\big]_{j=0}^{n+m-2i+1}~~.$$
Accordingly, the coefficients of polynomials $u_i$ and $v_i$ are given by the vector $\mathbf{x}$ as follows:
\begin{equation}\label{coeff_u,v}
u_i=\sum_{j=0}^{m-i}(-1)^j\det(\widetilde{M}^j) x^{m-i-j}~, \quad
v_i=\sum_{j=0}^{n-i} (-1)^{m-i+1+j}\det(\widetilde{M}^{m-i+1+j})x^{n-i-j}~.
\end{equation}
Further, the bottom $\ast$-part of the equation $M\mathbf{x}=\tiny{\begin{pmatrix} \mathbf{0} \\ \ast \\ \vdots \\ \ast\end{pmatrix}}$ can be decided up to sign by determinants of $\widetilde{M}_k$'s which are $(n+m-2i+2)\times (n+m-2i+2)$ submatrices of $M$ and the submatrix $\widetilde{M}_k$ is obtained by adding the $(n+m-2i+2+k)$-th row of $M$ to $\widetilde{M}$ for $0\le k\le i-1$. Then, the resulting polynomial $u_if+v_ig$, which has degree at most $(i-1)$, can be written in the form
\begin{equation}\label{uf+vg}
(-1)^{n+m-2i+1}\cdot\sum_{k=0}^{i-1} \det(\widetilde{M}_k)x^{i-1-k}\qquad.
\end{equation}
\end{proof}

\section{Proof of the main result}\label{sect_upp_bd}
\subsection{The ideal $J_\mathbf{a}$}\label{subsect_3-1}

Recall that $S=\R[X_0,\ldots,X_n]$ is a polynomial ring over the reals and $T=\R[x_0,\ldots,x_n]$ is the dual ring of $S$. For a sequence of positive integers $\mathbf{a}=(a_0,a_1,\ldots,a_n)$ with each $a_i>0$, consider a monomial $\mathbf{X}^{\mathbf{a}}=X_0^{a_0}X_1^{a_1}\cdots X_n^{a_n}$ in $S$. Then the apolar ideal $(\mathbf{X}^\mathbf{a})^\perp$ in $T$ is given by $(x_0^{a_0+1},x_1^{a_1+1},\ldots,x_n^{a_n+1})$. We would like to find an appropriate ideal of $\R$-points $J\subset(\mathbf{X}^\mathbf{a})^\perp$ which computes our upper bound in Theorem \ref{main_thm}. To do this, we first need to introduce the following families of polynomials with a real parameter.

\begin{definition}\label{Fij}Let $t$ be any nonzero real number. For a given sequence $\mathbf{a}=(a_0,a_1,\ldots,a_n)$ of odd numbers, define the polynomials of $T$, $F_{i,j}^{\mathbf{a}}(t)$ by
$$F_{i,j}^{\mathbf{a}}(t):=\prod_{k=-\lfloor\frac{a_i}{4}\rfloor-\lfloor\frac{a_j+2}{4}\rfloor}^{\lfloor\frac{a_i+2}{4}\rfloor+\lfloor\frac{a_j}{4}\rfloor}(x_i-t^kx_j)(x_i+t^kx_j)\quad\textrm{for any $i,j$ with $0\leq i<j\leq n$}.$$
\end{definition}
\begin{definition}\label{Gij} Consider the function $\epsilon:\mathbb{Z}\rightarrow \{0,1\}$ with
$$\epsilon(p):=\begin{cases}
0 & p~\text{is odd}\\
1 & p~\text{is even}
\end{cases}.
$$ 
For any sequence $\mathbf{a}$ of positive integers with length $n+1$, let $\mathbf{a}'$ be the associated sequence of odd numbers given by 
\begin{equation}\label{a_prime}
a'_i=a_i-\epsilon(a_i)~,\quad i=0,\ldots,n~.
\end{equation}
For $i,j$ with $0\leq i<j\leq n$, define $G_{i,j}^{\mathbf{a}}(t)$ as
$$G_{i,j}^{\mathbf{a}}(t):=x_i^{\epsilon(a_i)}x_j^{\epsilon(a_j)}F_{i,j}^{\mathbf{a}'}(t)~.$$
In other words, 
$$G_{i,j}^{\mathbf{a}}(t)=
\begin{cases}
F_{i,j}^{\mathbf{a}'}(t) & \mathbf{a}_i~\text{is odd}, ~\mathbf{a}_j~\text{is odd}\\
x_iF_{i,j}^{\mathbf{a}'}(t) &\mathbf{a}_i~\text{is even}, ~\mathbf{a}_j~\text{is odd}\\
x_jF_{i,j}^{\mathbf{a}'}(t) & \mathbf{a}_i~\text{is odd}, ~\mathbf{a}_j~\text{is even}\\
x_ix_jF_{i,j}^{\mathbf{a}'}(t) & \mathbf{a}_i~\text{is even},~ \mathbf{a}_j~\text{is even}~.
\end{cases}
$$
From now on, instead of $F_{i,j}^{\mathbf{a}}(t)$ and $G_{i,j}^{\mathbf{a}}(t)$ we often use the notations of $F_{i,j}^{\mathbf{a}}$ and $G_{i,j}^{\mathbf{a}}$ without $t$ (or even simpler $F$ and $G$) where everything is clear.
\end{definition}

\begin{remark}\label{F rmk} 
We have some remarks on the forms $F_{i,j}^{\mathbf{a}}(t)$ and $G_{i,j}^\mathbf{a}(t)$.
\begin{enumerate}
\item (Hyperbolicity of $F$ and $G$) First of all, we would like to note that for any $t\in\R$ the binary forms $F_{i,j}^{\mathbf{a}}(t)$ and $G_{i,j}^\mathbf{a}(t)$ are real hyperbolic polynomials in that all their roots are real (see e.g. \cite[section 2]{BS} for more on this). In particular, if $t\neq \pm1$, the forms $F_{i,j}^{\mathbf{a}}(t)$ and $G_{i,j}^\mathbf{a}(t)$ have distinct real roots.
\item For any sequence of odd numbers $\mathbf{a}$, $F_{i,j}^{\mathbf{a}}(t)$ has degree 
$$2\cdot\left(\lfloor\frac{a_i+2}{4}\rfloor+\lfloor\frac{a_j}{4}\rfloor+\lfloor\frac{a_i}{4}\rfloor+\lfloor\frac{a_j+2}{4}\rfloor+1\right)=a_i+a_j~.$$
Similarly, we have $\deg(G_{i,j}^\mathbf{a}(t))=\epsilon(a_i)+\epsilon(a_j)+\deg(F_{i,j}^{\mathbf{a}'})=\epsilon(a_i)+\epsilon(a_j)+a_i'+a_j'=a_i+a_j$ for any sequence of positive integers $\mathbf{a}$. Furthermore, from the definition, it also holds that $F_{i,j}^{\mathbf{a}}(t)=F_{i,j}^{\mathbf{a}}(-t)$ and $G_{i,j}^\mathbf{a}(t)=G_{i,j}^\mathbf{a}(-t)$ for all $t\in\R$ (i.e. the $F_{i,j}^\mathbf{a}$ and $G_{i,j}^\mathbf{a}$ are even functions).
\item (Expansions of $F$ and $G$) For a sequence of odd numbers $\mathbf{a}=(a_0,a_1,\ldots,a_n)$, $F_{i,j}^{\mathbf{a}}(t)$ can be expanded as
\begin{equation}\label{F_expand}
F_{i,j}^\mathbf{a}(t)=\sum_{d=0}^{\frac{a_i+a_j}{2}}C_{i,j,d}^{\mathbf{a}}(t)x_i^{a_i+a_j-2d}x_j^{2d}
\end{equation}
where each coefficient is given by
\begin{equation}\label{F_coeff}
C_{i,j,d}^{\mathbf{a}}(t)=(-1)^{d}\sum_{\substack{\mathcal{I}\subset \mathcal{J} \\ |\mathcal{I}|=d}}\prod_{k\in \mathcal{I}}t^{2k}
\end{equation}
for $\mathcal{J}=\{-\lfloor\frac{a_i}{4}\rfloor-\lfloor\frac{a_j+2}{4}\rfloor,\ldots,\lfloor\frac{a_i+2}{4}\rfloor+\lfloor\frac{a_j}{4}\rfloor\}$.
Note that $C_{i,j,0}^{\mathbf{a}}(t)=1$ and in general each coeffient $C_{i,j,d}^{\mathbf{a}}(t)$ can be viewed formally as a Laurent polynomial in $\mathbb{R}[t,t^{-1}]$. Its top degree term has degree equal to twice the sum of $d$ greatest elements in $\mathcal{J}$. Let us denote this top degree by $Q_{i,j}^\mathbf{a}(d)$, which is given by the quadratic polynomial of $d$ as the following form
\begin{align}\label{Q_ij}
Q_{i,j}^\mathbf{a}(d):&=\mathrm{top~degree}(C_{i,j,d}^{\mathbf{a}}(t))=\sum_{s=0}^{d-1} \left(2(\lfloor\frac{a_i+2}{4}\rfloor+\lfloor\frac{a_j}{4}\rfloor)-2s\right)\\ \nonumber
&=2d\left(\lfloor\frac{a_i+2}{4}\rfloor+\lfloor\frac{a_j}{4}\rfloor\right)-(d-1)d\\ \nonumber
&=-d^2+d\left(1+2\lfloor\frac{a_i+2}{4}\rfloor+2\lfloor\frac{a_j}{4}\rfloor\right).
\end{align}
For a sequence of positive integers $\mathbf{a}$, let $\mathbf{a}'$ be the associated sequence of odd numbers as in Definition \ref{Gij}. Then $G_{i,j}^{\mathbf{a}}(t)$ can be also rewritten as the following form
\begin{equation}\label{G_expand}
G_{i,j}^\mathbf{a}(t)=\sum_{d=0}^{\frac{a_i'+a_j'}{2}}C_{i,j,d}^{\mathbf{a}'}(t)x_i^{a_i+a_j'-2d}x_j^{2d+\epsilon(a_j)}~.
\end{equation}
\item ($F_{i,j}^{\mathbf{a}}(t)$ and $G_{i,j}^{\mathbf{a}}(t)$ are in $(\mathbf{X}^\mathbf{a})^\perp$ for any nonzero real $t$) Using the above expansions, we observe that for any sequence of odd numbers $\mathbf{a}$, $F_{i,j}^{\mathbf{a}}(t)\in (\mathbf{X}^\mathbf{a})^\perp$ because for any $t\neq0\in\R$
\begin{multline*}
F_{i,j}^{\mathbf{a}}(t)=x_i^{a_i+a_j}+\cdots+C_{i,j,\frac{a_j-1}{2}}^{\mathbf{a}}(t) x_i^{a_i+1}x_j^{a_j-1}+C_{i,j,\frac{a_j+1}{2}}^{\mathbf{a}}(t) x_i^{a_i-1}x_j^{a_j+1}+\cdots\\
\in (x_i^{a_i+1},x_j^{a_j+1}).
\end{multline*}

Similarly, we can see that $G_{i,j}^{\mathbf{a}}(t)\in (\mathbf{X}^\mathbf{a})^\perp$ for any sequence of positive integers $\mathbf{a}$, since
\begin{multline*}
G_{i,j}^{\mathbf{a}}(t)=x_i^{a_i+a_j'}x_j^{\epsilon(a_j)}+\cdots+C_{i,j,\frac{a_j'-1}{2}}^{\mathbf{a}'}(t) x_i^{a_i+1}x_j^{a_j-1}+C_{i,j,\frac{a_j'+1}{2}}^{\mathbf{a}'}(t) x_i^{a_i-1}x_j^{a_j+1}+\cdots\\
\in (x_i^{a_i+1},x_j^{a_j+1})~.
\end{multline*}
\end{enumerate}
\end{remark}

\begin{definition}\label{def_Ja} Now, we define an ideal 
\begin{equation*}
J_{\mathbf{a}}(t):=\left(\{G_{i,j}^{\mathbf{a}}(t)~:~0\leq i<j\leq n\}\right)
\end{equation*}
Note that $J_{\mathbf{a}}(t)$ becomes a homogeneous ideal in $T$ by evaluation at any nonzero parameter $t\in\R$. 
\end{definition}

\begin{remark}
An example which shows a motive for the choice in the definition of $F$, $G$ and $J_\mathbf{a}$ is the ideal $(x_0^3x_1-x_0x_1^3,x_0^3x_2-x_0x_2^3,x_1^3x_2-x_1x_2^3)$ which is contained in the apolar ideal $(X_0^2X_1^2X_2^2)^\perp=(x_0^3,x_1^3,x_2^3)$ and defines $13$ points on the real projective plane. This gives $\rank_{\mathbb{R}}(X_0^{2}X_1^{2}X_2^{2})\le13$. In fact, there are other similar ways to take a generating set of binary forms inside the apolar ideal which cuts distinct real points on the plane.

But if we do not choose the symmetry of roots of generators carefully, the saturatedness of that ideal is easily fails. For instance, the ideal $(x_0^3x_1-x_0x_1^3,4x_0^3 x_2-x_0x_2^3,x_1^3x_2-9x_1x_2^3)$ defines $9$ points and is also contained in the apolar ideal \\$(x_0^3,x_1^3,x_2^3)$. But this ideal is not saturated and $x_0x_1x_2$ is added under the saturation. Then the resulting ideal does not belong to the apolar ideal, hence we can no longer use the Apolarity Lemma. 
\end{remark}


By abuse of notation, we often use $J_{\mathbf{a}}$ to denote $J_{\mathbf{a}}(t)$ whenever the parameter $t$ is not necessary for arguments.

\begin{proposition}\label{Ja}
For a monomial $\mathbf{X}^\mathbf{a}=X_0^{a_0}X_1^{a_1}\cdots X_n^{a_n}$ with each $a_i>0$, the ideal $J_{\mathbf{a}}(t)$ is contained in $(\mathbf{X}^\mathbf{a})^\perp=(x_0^{a_0+1},x_1^{a_1+1},\ldots,x_n^{a_n+1})$ for any nonzero $t\in\R$. The zero set $V(J_{\mathbf{a}}(t))$ consists of $\frac{1}{2}(\prod_{i=0}^n(a_i+1)-\prod_{i=0}^n(a_i-1))$ real points for any nonzero real $t\neq \pm1$.
\end{proposition}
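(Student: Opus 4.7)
The containment $J_\mathbf{a}(t)\subseteq(\mathbf{X}^\mathbf{a})^\perp$ is immediate from Remark~\ref{F rmk}(4), which already showed every generator $G_{i,j}^\mathbf{a}(t)$ lies in the smaller ideal $(x_i^{a_i+1},x_j^{a_j+1})$. For the count of $V(J_\mathbf{a}(t))$, my plan is to stratify the variety by the support $\Sigma:=\{i:c_i\neq 0\}$ of a point $[c_0:\cdots:c_n]$, and set $\Sigma_{\mathrm{odd}}:=\{i:a_i\text{ is odd}\}$. A first, easy step will be to show $\Sigma\supseteq\Sigma_{\mathrm{odd}}$ for every such point: if $i\in\Sigma_{\mathrm{odd}}\setminus\Sigma$ and $j\in\Sigma$, the evaluation $G_{i,j}^\mathbf{a}(0,c_j)=c_j^{\epsilon(a_j)}F_{i,j}^{\mathbf{a}'}(0,c_j)$ collapses, via the product definition of $F_{i,j}^{\mathbf{a}'}$, to a nonzero scalar times $c_j^{a_i'+a_j'}$, contradicting $c_j\neq 0$.

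For each valid $\Sigma\supseteq\Sigma_{\mathrm{odd}}$, I would parameterize the stratum $V_\Sigma$ as follows. Every $G_{i,j}^\mathbf{a}$ with an index outside $\Sigma$ vanishes automatically on $V_\Sigma$, because the excluded index has even $a$-value and hence, by Definition~\ref{Gij}, contributes an $x$-prefactor that kills the equation. The remaining equations, for $i,j\in\Sigma$, reduce to $F_{i,j}^{\mathbf{a}'}(c_i,c_j)=0$, forcing the ratio $c_i/c_j$ to lie in the finite set $\{\pm t^k:k\in R_{i,j}^{\mathbf{a}'}\}$, where $R_{i,j}^{\mathbf{a}'}=[-\lfloor a_i'/4\rfloor-\lfloor(a_j'+2)/4\rfloor,\,\lfloor(a_i'+2)/4\rfloor+\lfloor a_j'/4\rfloor]$ is the index range of Definition~\ref{Fij}. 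Since $|t|\neq 0,1$, one writes uniquely $c_i=\sigma_i t^{\alpha_i}$ with $\sigma_i\in\{\pm 1\}$ and $\alpha_i\in\mathbb{Z}$; the signs $\sigma_i$ are entirely free (each binary factor $(x_i-t^k x_j)(x_i+t^k x_j)$ is an even function of $x_i/x_j$), while the exponents must satisfy $\alpha_i-\alpha_j\in R_{i,j}^{\mathbf{a}'}$. Quotienting by the projective scaling action, which acts on $(\sigma,\alpha)$ as a global $\pm 1$-flip combined with a common integer shift, gives $|V_\Sigma|=2^{|\Sigma|-1}\cdot E(\Sigma)$, where $E(\Sigma)$ counts tuples $(\alpha_i)_{i\in\Sigma}$ modulo common shift subject to all pairwise constraints.

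The decisive identity will be $E(\Sigma)=\prod_{i\in\Sigma}\tfrac{a_i'+1}{2}-\prod_{i\in\Sigma}\tfrac{a_i'-1}{2}$. The pivotal observation is that $R_{i,j}^{\mathbf{a}'}$ is exactly the Minkowski difference $I_i-I_j$ of the individual intervals $I_i:=[-\lfloor a_i'/4\rfloor,\,\lfloor(a_i'+2)/4\rfloor]$, each of cardinality $(a_i'+1)/2$. Consequently, every integer tuple in the box $B_\Sigma:=\prod_{i\in\Sigma}I_i$ automatically satisfies all pairwise constraints, and conversely every valid equivalence class has a representative in $B_\Sigma$. I would then count shift-orbits in $B_\Sigma$ via the bookkeeping $E(\Sigma)=|B_\Sigma|-\#\{(c_i)\in B_\Sigma:(c_i+1)_{i\in\Sigma}\in B_\Sigma\}$, since inside any orbit of size $k$ within $B_\Sigma$ there are exactly $k-1$ consecutive shift pairs; the subtracted count equals $\prod_{i\in\Sigma}(a_i'-1)/2$. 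Finally, summing $|V_\Sigma|=\tfrac{1}{2}(\prod_{i\in\Sigma}(a_i'+1)-\prod_{i\in\Sigma}(a_i'-1))$ over $\Sigma\supseteq\Sigma_{\mathrm{odd}}$, via the decomposition $\Sigma=\Sigma_{\mathrm{odd}}\sqcup T$ with $T\subseteq\{i:a_i\text{ even}\}$, and using the identities $a_i'+1=a_i+1$ for odd $a_i$ together with $1+(a_i'+1)=a_i+1$ for even $a_i$ (and their $-1$-counterparts), the products collapse to the claimed closed form $\tfrac{1}{2}(\prod(a_i+1)-\prod(a_i-1))$.

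The main technical hurdle I anticipate is the orbit-count identity for $E(\Sigma)$: verifying both the Minkowski-difference characterization of $R_{i,j}^{\mathbf{a}'}$ (a careful floor-function check that justifies the cunning choice of indices in Definition~\ref{Fij}) and the claim that every valid equivalence class meets $B_\Sigma$ in a nonempty set. The hypothesis $t\neq 0,\pm 1$ is essential throughout so that the values $\{\pm t^k\}$ remain pairwise distinct reals, which in turn makes the factorization $c_i=\sigma_i t^{\alpha_i}$ unique and the point count exact.
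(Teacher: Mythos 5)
Your proposal is correct, and for the general (mixed-parity) case it takes a genuinely different route from the paper. The paper first settles the all-odd case by identifying the positive-orthant zeros with the lattice box $-\lfloor a_i/4\rfloor\le b_i\le\lfloor (a_i+2)/4\rfloor$ modulo a common shift (counting representatives with some coordinate at the lower endpoint, then multiplying by $2^n$ for sign reflections), and then handles even exponents by a double induction on the length of $\mathbf{a}$ and the number of even entries, using that for even $a_l$ every generator involving $x_l$ is divisible by $x_l$, so that $V(J_\mathbf{a})$ splits into the locus $x_l\neq0$ (the case $\mathbf{a}-\mathbf{e}_l$) and $x_l=0$ (the case with the $l$-th entry deleted). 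You avoid the induction entirely: the stratification by exact support $\Sigma\supseteq\Sigma_{\mathrm{odd}}$, the per-stratum count $\tfrac12\bigl(\prod_{i\in\Sigma}(a_i'+1)-\prod_{i\in\Sigma}(a_i'-1)\bigr)$, and the expansion over $T\subseteq\{i: a_i\ \mathrm{even}\}$ give the closed form in one pass, and as a bonus an explicit description of all of $V(J_\mathbf{a}(t))$ matching the worked examples after Proposition~\ref{Ja}; the per-stratum count itself is the same idea as the paper's odd case, with your Minkowski-difference observation and the ``box minus shifted box'' identity being a slightly cleaner bookkeeping than the paper's lower-endpoint representatives. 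The two hurdles you flag are real but routine, and are exactly what the paper verifies in its odd case: the index range in Definition~\ref{Fij} is by construction $I_i-I_j$ for $I_i=[-\lfloor a_i'/4\rfloor,\lfloor(a_i'+2)/4\rfloor]$ (a direct floor-function check), and every valid shift-class meets $B_\Sigma$ by the same minimum-shift normalization the paper uses (replace $\alpha$ by $\alpha-\min_i(\alpha_i-\min I_i)$; the pairwise upper bounds then force each coordinate into $I_i$). One point to state explicitly when writing this up: coordinates acquire the form $\pm t^{\alpha_i}$ only after projectively rescaling one coordinate to $1$ using the ratio conditions, and the residual scalings preserving this normal form are exactly $\pm t^m$, which is what legitimizes your quotient by a global sign flip together with a common integer shift (and when $\Sigma_{\mathrm{odd}}=\emptyset$ the empty-support term contributes $0$, so the summation is unaffected).
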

\begin{proof}
It is obvious that $J_{\mathbf{a}}(t)\subset(\mathbf{X}^\mathbf{a})^\perp$ by Remark \ref{F rmk} (4). For the common zero set, first of all, let us treat the case where all $a_i$ are odd. In this case, $G_{i,j}^{\mathbf{a}}(t)=F_{i,j}^{\mathbf{a}}(t)$ and it is easy to see that $V(J_{\mathbf{a}}(t))$ has no zero on the coordinate planes. Let us denote the points of $V(J_{\mathbf{a}}(t))$ in the positive orthant by $V(J_\mathbf{a})^+$. All other points of $V(J_{\mathbf{a}}(t))$ can be obtained by all reflections of $V(J_\mathbf{a})^+$ with respect to every axis. Since $G_{i,j}^\mathbf{a}(t)=G_{i,j}^\mathbf{a}(-t)$, from now on, assuming $t>0$.

We claim that $V(J_\mathbf{a})^+$ is the set $\Gamma=\{[t^{b_0}:t^{b_1}:\ldots:t^{b_n}]~|~-\lfloor\frac{a_i}{4}\rfloor\leq b_i\leq \lfloor\frac{a_i+2}{4}\rfloor,b_i\in \mathbb{Z}\}$. To see that $\Gamma\subset V(J_{\mathbf{a}})^+$, we need to show that every generator $G_{i,j}$ of $J$ vanishes at the point $[t^{b_0}:t^{b_1}:\ldots:t^{b_n}]$. Since $-\lfloor\frac{a_i}{4}\rfloor\leq b_i\leq \lfloor\frac{a_i+2}{4}\rfloor$ and $-\lfloor\frac{a_j}{4}\rfloor\leq b_j\leq \lfloor\frac{a_j+2}{4}\rfloor$, we get $-\lfloor\frac{a_i}{4}\rfloor-\lfloor\frac{a_j+2}{4}\rfloor\leq b_i-b_j\leq \lfloor\frac{a_i+2}{4}\rfloor+\lfloor\frac{a_j}{4}\rfloor$. Hence, $G^\mathbf{a}_{i,j}(t)([t^{b_0}:t^{b_1}:\ldots:t^{b_n}])=0$ for all $0\leq i<j\leq n$.

To prove the opposite direction, $V(J_{\mathbf{a}})^+\subset \Gamma$, let $p$ be any point $[\alpha_0:\alpha_1:\ldots:\alpha_n]$ of $V(J_{\mathbf{a}})^+$. First we see that $p$ can be written in the form $[t^{c_0}:t^{c_1}:\ldots:t^{c_n}]$ for some integers $c_i$. For any $i$ with $0\le i< n$, $\alpha_i=t^{d_i}\alpha_n$ for some $d_i\in\ZZ$, because $G^\mathbf{a}_{i,n}(t)(p)=0$ and $\alpha_i, \alpha_n>0$. So, $p=[t^{d_0}\alpha_n:t^{d_1}\alpha_n:\ldots:\alpha_n]$ and if we set $\alpha_n=t^{d_n}$ for some $d_n\in\ZZ$ (by scaling $\alpha_i$ if necessary), then $p$ is of the form $[t^{c_0}:t^{c_1}:\ldots:t^{c_n}]$ and $-\lfloor\frac{a_i}{4}\rfloor-\lfloor\frac{a_j+2}{4}\rfloor\leq c_i-c_j\leq \lfloor\frac{a_i+2}{4}\rfloor+\lfloor\frac{a_j}{4}\rfloor$ from the definition of $G^\mathbf{a}_{i,j}$. 
  
  Let $k_i=c_i+\lfloor\frac{a_i}{4}\rfloor$ and $k_l=\min\{k_0,\ldots,k_n\}$. Then $0\leq k_i-k_l=c_i+\lfloor\frac{a_i}{4}\rfloor-k_l$. So $-\lfloor\frac{a_i}{4}\rfloor\leq c_i-k_l=(c_i-c_l)-\lfloor\frac{a_l}{4}\rfloor\leq (\lfloor\frac{a_i+2}{4}\rfloor+\lfloor\frac{a_l}{4}\rfloor)-\lfloor\frac{a_l}{4}\rfloor=\lfloor\frac{a_i+2}{4}\rfloor$ for all $i=0,\ldots,n$. Thus, we can conclude that $p=t^{-k_l}\cdot[t^{c_0}:t^{c_1}:\ldots:t^{c_n}]\in \Gamma$. 

Next, for any real $t\neq\pm1$, let us count the number of points of $\Gamma$. Since $[t^{b_0}:t^{b_1}:\ldots:t^{b_n}]=[t^{b_0+1}:t^{b_1+1}:\ldots:t^{b_n+1}]$, we need to consider the equivalence among points in the description of $\Gamma$. Note that a representative in each equivalence can be chosen as $[t^{b_0}:t^{b_1}:\ldots:t^{b_n}]$ with at least one index $i$ such that $-\lfloor\frac{a_i}{4}\rfloor=b_i$. Hence the number of elements in $\Gamma$ is given by $$\prod_{i=0}^n(\lfloor\frac{a_i}{4}\rfloor+\lfloor\frac{a_i+2}{4}\rfloor+1)-\prod_{i=0}^n(\lfloor\frac{a_i}{4}\rfloor+\lfloor\frac{a_i+2}{4}\rfloor)~.$$
Since each $a_i$ is odd and $\lfloor\frac{a_i}{4}\rfloor+\lfloor\frac{a_i+2}{4}\rfloor+1=\frac{a_i+1}{2}$, $\lfloor\frac{a_i}{4}\rfloor+\lfloor\frac{a_i+2}{4}\rfloor=\frac{a_i-1}{2}$ regardless of $a_i=4k+1$ or $4k+3$, we obtain
$$|V(J_{\mathbf{a}})^+|=|\Gamma|=\prod_{i=0}^n \frac{a_i+1}{2}-\prod_{i=0}^n \frac{a_i-1}{2}~.$$
Moreover, by considering all the $n$ coordinate reflections, we find
$$|V(J_{\mathbf{a}})|=2^n\cdot(\prod_{i=0}^n\frac{a_i+1}{2}-\prod_{i=0}^n \frac{a_i-1}{2})=\frac{1}{2}\left(\prod_{i=0}^n(a_i+1)-\prod_{i=0}^n(a_i-1)\right)~.$$

Now, let us prove the statement for any sequence $\mathbf{a}=(a_0,a_1,\cdots,a_n)$. Let $E$ be the set of indices $i$ for which $a_i$ is even. We will use a double induction on $n$, the length of $\mathbf{a}$, and on $|E|$, the size of $E$. One initial condition $|E|=0$ (i.e. the case of all $a_i$ being odd) is checked above. Another initial case $n=1$ follows as $G^\mathbf{a}_{0,1}(t)$ has $a_0+a_1$ distinct real roots when $t\neq \pm 1$.

Consider any sequence $\mathbf{a}=(a_0,a_1,\ldots,a_n)$ with $|E|=k$ and assume that the statement does hold if the number of even indices is less than $k$ for any sequence of the same length or if the length of a sequence is shorter than $\mathbf{a}$. Take an index $l\in E$, consider $\mathbf{a}'=\mathbf{a}-\mathbf{e}_l$, where $\mathbf{e}_l$ is the $l$-th coordinate vector. Since $a_l$ is even, all polynomials $G_{i,l}^{\mathbf{a}}$ and $G_{l,j}^{\mathbf{a}}$ have the factor $x_l$. This property implies that $V(J_\mathbf{a})$ is a union of  two disjoint subsets - one subset from $x_l=0$ and the other $x_l\neq0$. If $x_l$ is nonzero, computing zeros of this subset is reduced to the case $\mathbf{a}'$. On the other hand, if $x_l$ is zero, the common zeros are same as the case $\mathbf{a}'':=(a_0,a_1,\ldots,a_{l-1},a_{l+1},\ldots,a_n)$. By the induction hypothesis, we have 
\begin{align*}
|V(J_\mathbf{a})|&=|V(J_{\mathbf{a}'})|+|V(J_{\mathbf{a}''})|\\
&=\frac{1}{2}\bigg((a_l-1+1)\prod_{i\neq l}^n (a_i+1)-(a_l-1-1)\prod_{i\neq l}^n (a_i-1)\bigg)+\frac{1}{2}\bigg(\prod_{i\neq l}^n (a_i+1)-\prod_{i\neq l}^n (a_i-1)\bigg)\\
&=\frac{1}{2}\left(\prod_{i=0}^n (a_i+1)-\prod_{i=0}^n (a_i-1)\right)~.
\end{align*}
\end{proof}

\begin{example}\label{Ex543, Ex444}
As an illustration of our method, we present an apolar set for $X_0^5X_1^4X_2^3$ and $X_0^4X_1^4X_2^4$. First, let $\mathbf{a}=\{5,4,3\}$. Then $J_\mathbf{a}(t)=(G_{0,1}^\mathbf{a},G_{0,2}^\mathbf{a},G_{1,2}^\mathbf{a})$ and
\begin{align*}
&G_{0,1}^\mathbf{a}=x_1(x_0^2-\frac{1}{t^4}x_1^2)(x_0^2-\frac{1}{t^2}x_1^2)(x_0^2-x_1^2)(x_0^2-t^2x_1^2)~\in(x_0^6,x_1^5),\\
&G_{0,2}^\mathbf{a}=(x_0^2-\frac{1}{t^4}x_2^2)(x_0^2-\frac{1}{t^2}x_2^2)(x_0^2-x_2^2)(x_0^2-t^2x_2^2)~\in(x_0^6,x_2^4),\\
&G_{1,2}^\mathbf{a}=x_1(x_1^2-\frac{1}{t^2}x_2^2)(x_1^2-x_2^2)(x_1^2-t^2x_2^2)~\in(x_1^5,x_2^4)~,
\end{align*}
where we can check that $(G_{0,1}^\mathbf{a},G_{0,2}^\mathbf{a},G_{1,2}^\mathbf{a})\subset (x_0^6,x_1^5,x_2^4)$, the apolar ideal. Since $E$, the set in the proof of Proposition \ref{Ja}, is just $\{1\}$, $V(J_\mathbf{a})$ consists of a union of two disjoint parts, i.e. $\{x_1=0\}$ and $\{x_1\neq 0\}$. Note that the zeros in $\{x_1\neq 0\}$ are given by the points in $V(J_{\mathbf{a}'})$ where $\mathbf{a}'=\{5,3,3\}$. The points in $V(J_{\mathbf{a}'})$ can be represented by $[t^{k_0};t^{k_1};t^{k_2}]$ with $-\lfloor\frac{a_i}{4}\rfloor\leq k_i\leq \lfloor\frac{a_i+2}{4}\rfloor$ and at least one $k_i$ is equal to the lower bound.
To simplify the situation, let $k_i'=k_i+\lfloor\frac{a_i}{4}\rfloor$ and replace all $x_i$ by $t^{-\lfloor\frac{a_i}{4}\rfloor}x_i$. Then $0\leq k_i'\leq  \lfloor\frac{a_i+2}{4}\rfloor+\lfloor\frac{a_i}{4}\rfloor=\lfloor\frac{a_i-1}{2}\rfloor$ and at least one of $k_i'$ is zero.
In this case, $0\leq k_0'\leq 2, 0\leq k_1'\leq 1, 0\leq k_2'\leq 1$ and at least one of $k_i'$ is zero. Among the $3\cdot 2\cdot 2=12$ possible choices, the two caes $(k_0',k_1',k_2')=(1,1,1)$ and $(2,1,1)$ do not satisfy the condition that at least one of $k_i'$ is zero.

Hence, considering axis-reflections on $x_1, x_2$, there are $10\cdot 2^2=40$ nonzero points 
\begin{align*}
\{[1;\pm 1;\pm 1],[1;\pm 1;\pm t],[1;\pm t;\pm 1],[1;\pm t;\pm t],[t;\pm 1;\pm 1],\\
[t;\pm 1;\pm t],[t;\pm t;\pm 1],[t^2;\pm 1;\pm 1],[t^2;\pm 1;\pm t],[t^2;\pm t;\pm 1]\}
\end{align*}
for any real $t\neq 0,\pm1$. In $V(J_\mathbf{a})$, there are  another points in $\{x_1=0\}$. These points have the form $[t^{k_0'},0,t^{k_2'}]$ where $0\leq k_0'\leq 2$ and $0\leq k_2'\leq 1$. Among $3\cdot 2=6$ possible choices, $(k_0',k_2')=(1,1)$ and $(2,1)$ do not satisfy the condition that at least one of $k_i'$ is zero. Hence, considering axis-reflections there are $4\cdot 2=8$ points such as
 $$\{[1;0;\pm 1],[1;0;\pm t],[t;0;\pm 1],[t^2;0;\pm 1]\}.$$ 
They give a total of $48$ points and we have $\frac{1}{2}(4\cdot 5\cdot 6-2\cdot 3\cdot 4)=48$.
\smallskip

Now, let us take $\mathbf{a}=\{4,4,4\}$. Then $J_{\mathbf{a}}=(G_{0,1}^\mathbf{a},G_{0,2}^\mathbf{a},G_{1,2}^\mathbf{a})$
\begin{align*}
&G_{0,1}^\mathbf{a}=x_0x_1(x_0^2-\frac{1}{t^2}x_1^2)(x_0^2-x_1^2)(x_0^2-t^2x_1^2)~\in(x_0^5,x_1^5),\\
&G_{0,2}^\mathbf{a}=x_0x_2(x_0^2-\frac{1}{t^2}x_2^2)(x_0^2-x_2^2)(x_0^2-t^2x_2^2)~\in(x_0^5,x_2^5),\\
&G_{1,2}^\mathbf{a}=x_1x_2(x_1^2-\frac{1}{t^2}x_2^2)(x_1^2-x_2^2)(x_1^2-t^2x_2^2)~\in(x_1^5,x_2^5)~,
\end{align*}
In this case, $E=\{0,1,2\}$ and $0\leq k_0'\leq 1, 0\leq k_1'\leq 1, 0\leq k_2'\leq 1$. There are $7$ disjoint subsets of $V(J_\mathbf{a})$ which are 
\begin{align*}
x_0\neq0,x_1\neq 0, x_2\neq 0,&\quad x_0\neq 0, x_1\neq 0, x_2=0, \quad x_0\neq 0, x_1= 0, x_2\neq0,\\
x_0= 0, x_1\neq 0, x_2\neq0,&\quad x_0\neq 0, x_1= 0, x_2=0, \quad  x_0=0, x_1\neq 0, x_2=0,\\
 x_0= 0, x_1= 0, x_2\neq0~. &&
 \end{align*}
Accordingly, the points from these 7 subsets are given by
\begin{align*}
&\{[1;\pm 1;\pm 1],[1;\pm 1;\pm t],[1;\pm t;\pm 1],[t;\pm 1;\pm 1],[1;\pm t;\pm t],[t;\pm 1;\pm t],[t;\pm t;\pm 1]\}~,\\
&\{[1;\pm 1;0],[1;\pm t;0],[t;\pm 1;0]\}~,\{[1;0;\pm 1],[1;0;\pm t],[t;0;\pm 1]\},~\{[0;1;\pm 1],[0;1;\pm t],[0;t;\pm 1]\}~,\\
&\{[1;0;0]\}~,\{[0;1;0]\}~,\{[0;0;1]\}
\end{align*}
for any nonzero real $t\neq \pm1$. Hence we have $7\cdot 2^2+3\cdot 2^1+3\cdot 2^1+3\cdot 2^1+1\cdot 2^0+1\cdot 2^0+1\cdot 2^0=49$ points.
This is same as the number $\frac{1}{2}(5\cdot 5\cdot 5-3\cdot 3\cdot 3)=49$. 

Further, we can check that points of $V(J_{\{4,4,4\}})$ with $x_0\neq 0$ and $x_2\neq 0$ is included in $V(J_{\{5,4,3\}})$. 
This fact is also true in general : If $a_k\geq a_l$ are nonzero even numbers, then the points in $V(J_{\mathbf{a}})$ with $x_k\neq 0$ and $x_l\neq 0$ is included in the set $V(J_\mathbf{a}')$ with $x_k\neq 0$ and $x_l\neq 0$, where $\mathbf{a}'=\{a_0,\ldots,a_k+1,\ldots,a_l-1,\ldots,a_n\}$. 
\end{example}

\subsection{Initial ideal of $J_\mathbf{a}$ with respect to the graded reverse lexicographic order}\label{subsect_3-2}

In what follows, we consider the initial ideal of $J_\mathbf{a}$ with respect to the \textit{graded reverse lexicographic order}. With the help of information on the structure of $in (J_\mathbf{a}(t))$, in subsection \ref{subsect_3-3} we show that at least for all but finitely many real $t$, $J_\mathbf{a}(t)$ defines some apolar set \textit{ideal-theorectically}, by which one can compute the upper bound in Theorem \ref{main_thm}. 

We proceed in the following steps : 
\begin{enumerate}
\item Begin with the case that the last index $a_n$ of the exponent sequence $\mathbf{a}$ is an odd number.
\item Define a monomial ideal $M_\mathbf{a}$ which is a candidate for the initial ideal of $J_\mathbf{a}$. Check that the degree of this ideal $M_\mathbf{a}$ is equal to the cardinality of $V(J_\mathbf{a})$ (Lemma \ref{degree of monomial}). 
\item Prove that $M_\mathbf{a}\subset in(J_\mathbf{a})$ by finding a family of polynomials in $J_\mathbf{a}$ whose leading terms are in $M_\mathbf{a}$ (Proposition \ref{initial supset}).
\item Using Lemma \ref{prop_deg_drop}, a structural property of $M_\mathbf{a}$, we settle $M_\mathbf{a}=in(J_\mathbf{a})$ for the case of $a_n$ odd in Proposition \ref{in_J_a_odd}.
\item Generalize this result to the case of any sequence $\mathbf{a}$ with $a_n$ even in Proposition \ref{in_J_a_even}.
\end{enumerate}

\begin{definition}\label{beta}
Let $\mathbf{a}$ be a sequence of positive integers $(a_0,a_1,\ldots,a_n),~a_0\geq a_1\geq \cdots\geq a_n$ with $a_n$ odd and $\mathbf{i}=(i_0,i_1,\ldots,i_{n-1})$ be a lattice point in the simplex 
\begin{align*}
\Delta^{n-1}(\lambda_\mathbf{a}):=\{(i_0,i_1,\ldots,i_{n-1})~|~i_0+i_1+\ldots+i_{n-1}=\lambda_\mathbf{a}, ~\textrm{for all}~ i_k\geq 0\}~\\\textrm{(let $\lambda_\mathbf{a}:=\frac{a_n+1}{2}$)}.
\end{align*}
Then we define an 1-1 mapping $\beta(\mathbf{a},-):\Delta^{n-1}(\lambda_\mathbf{a})\to\mathbb{Z}_{\ge0}^n$ and $\mathbf{x}^{\beta(\mathbf{a},\mathbf{i})}$ as follows :
$$\mathbf{x}^{\beta(\mathbf{a},\mathbf{i})}:=x_0^{\beta_0(\mathbf{a},\mathbf{i})}x_1^{\beta_1(\mathbf{a},\mathbf{i})}\cdots x_{n-1}^{\beta_{n-1}(\mathbf{a},\mathbf{i})},\quad~\textrm{for all}~\mathbf{i}\in \Delta^{n-1}(\lambda_\mathbf{a})$$ where
$$\beta_k(\mathbf{a},\mathbf{i})=
\begin{cases}
0, & \text{if}\quad i_k=0\\
a_k-1-2(i_0+i_1+\cdots+i_{k-1})+2i_k, & \text{otherwise}
\end{cases}\quad\textrm{for $0\leq k\leq n-1$.}$$
In other words, 
$$\begin{bmatrix} \beta_0 \\ \beta_1\\ \beta_2 \\ \vdots \\ \beta_{n-1}\end{bmatrix}
=\delta(\mathbf{i})\cdot\left(\begin{bmatrix} a_0-1 \\ a_1-1\\ a_2-1 \\ \vdots \\  a_{n-1}-1\end{bmatrix}
+\begin{bmatrix} 
2 & 0 & 0&\cdots & 0\\
-2 & 2 & 0&\cdots & 0\\
-2 & -2 & \ddots&\ddots & 0\\
\vdots & \vdots & \ddots& \ddots & \vdots \\
-2 & -2 & \cdots &-2& 2
\end{bmatrix}
\begin{bmatrix}i_0 \\ i_1\\ i_2 \\ \vdots \\ i_{n-1}\end{bmatrix}\right)
$$
where $\delta(\mathbf{i})$ be the diagonal matrix $diag(\delta(i_0),\delta(i_1),\ldots,\delta(i_{n-1}))$, where $\delta$ is a function with $\delta(k)=0$ for $k=0$ and $\delta(k)=1$ for others.
\end{definition}

\begin{remark}\label{beta rmk}
For two lattice points $\mathbf{i}, \mathbf{j}\in\Delta^{n-1}(\lambda_\mathbf{a})$, if $i_0+i_1+\cdots+i_s=j_0+j_1+\cdots+j_s$ and $i_{s'}=j_{s'}$ for all $s'>s$, then by the definition $\mathbf{x}^{\beta(\mathbf{a},\mathbf{i})}$ and $\mathbf{x}^{\beta(\mathbf{a},\mathbf{j})}$ have same exponents for $x_{s'},\forall s'>s$. 
\end{remark}

Using this definition, we have a candidate for $in(J_\mathbf{a})$ whose monomial generators are indexed by the \textit{lattice points in the simplex} $\Delta^{n-1}(\lambda_\mathbf{a})$.

\begin{definition}\label{def_Ma}
For a sequence of positive integers $\mathbf{a}=(a_0,\ldots,a_n)$, $a_0\geq a_1\geq \ldots \geq a_n$ with $a_n$ odd, the monomial ideal $M_\mathbf{a}$ is defined by
$$M_\mathbf{a}:=\left(\{\mathbf{x}^{\beta(\mathbf{a},\mathbf{i})};\mathbf{i}\in\Delta^{n-1}(\lambda_\mathbf{a})\}\right)~.$$
Note that there is no $x_n$ in the generators of $M_\mathbf{a}$ from Definition \ref{beta}.
\end{definition}

First, we calculate the degree of this monomial ideal and check some special structural property. 
\begin{lemma}\label{degree of monomial}
The degree of the monomial ideal $M_\mathbf{a}$ is given by $\frac{1}{2}(\prod_{i=0}^n(a_i+1)-\prod_{i=0}^n (a_i-1))$.
\end{lemma}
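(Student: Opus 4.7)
The plan is to reduce the statement to a finite-dimensional count in a smaller polynomial ring, and then to prove the resulting combinatorial identity by induction on $n$.

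Because no generator of $M_\mathbf{a}$ involves $x_n$ (Definition \ref{def_Ma}), set $T':=\R[x_0,\ldots,x_{n-1}]$ and $M'_\mathbf{a}:=M_\mathbf{a}\cap T'$; then one has a graded isomorphism $T/M_\mathbf{a}\cong (T'/M'_\mathbf{a})[x_n]$. The vertex $\lambda_\mathbf{a}\mathbf{e}_k\in\Delta^{n-1}(\lambda_\mathbf{a})$ produces the generator $x_k^{a_k+a_n}\in M'_\mathbf{a}$ for each $0\le k\le n-1$, so $M'_\mathbf{a}$ is $(x_0,\ldots,x_{n-1})$-primary and $\dim_\R T'/M'_\mathbf{a}<\infty$. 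Choosing the obvious Stanley decomposition $T/M_\mathbf{a}=\bigoplus_{\mathbf{x}^\alpha\notin M'_\mathbf{a}}\mathbf{x}^\alpha\,\R[x_n]$ and invoking Proposition \ref{Stanley}, the Hilbert polynomial of $T/M_\mathbf{a}$ is the constant $\dim_\R T'/M'_\mathbf{a}$; by the conventions of Section \ref{sect_prelim} this constant equals $\deg M_\mathbf{a}$. So the claim reduces to proving
\begin{equation*}
\dim_\R T'/M'_\mathbf{a}\;=\;N_\mathbf{a}:=\tfrac12\Big(\prod_{i=0}^n(a_i+1)-\prod_{i=0}^n(a_i-1)\Big).
\end{equation*}

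\textbf{Induction on $n$.} The base case $n=1$ is immediate: $\Delta^0(\lambda_\mathbf{a})=\{\lambda_\mathbf{a}\}$, $M'_\mathbf{a}=(x_0^{a_0+a_1})$, and both sides equal $a_0+a_1$. For the inductive step I would stratify the standard monomials $x_0^{c_0}\cdots x_{n-1}^{c_{n-1}}\notin M'_\mathbf{a}$ by the value of the last exponent $c_{n-1}=e$. The condition to be standard says that for every $\mathbf{i}\in\Delta^{n-1}(\lambda_\mathbf{a})$ some $c_k<\beta_k(\mathbf{a},\mathbf{i})$; fixing $c_{n-1}=e$ automatically satisfies the constraint for all $\mathbf{i}$ with $\beta_{n-1}(\mathbf{a},\mathbf{i})>e$, leaving the surviving constraints indexed by points of a sub-simplex. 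Remark \ref{beta rmk}, which records that the exponents $\beta_0,\ldots,\beta_{n-2}$ depend on $\mathbf{i}$ only through $i_0,\ldots,i_{n-2}$ and the single value $i_{n-1}$, then lets one identify the resulting sliced ideal with $M'_{\mathbf{b}}$ for a strictly shorter sequence $\mathbf{b}$, so the inductive hypothesis applies slice by slice.

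\textbf{Main obstacle.} The central difficulty will be collating the slice counts into the single closed form $N_\mathbf{a}$. The algebraic target I would aim to reproduce on the combinatorial side is
\begin{equation*}
N_{(a_0,\ldots,a_n)}\;=\;a_n\cdot N_{(a_0,\ldots,a_{n-1})}+\tfrac12\Big(\prod_{i=0}^{n-1}(a_i+1)+\prod_{i=0}^{n-1}(a_i-1)\Big),
\end{equation*}
obtained by expanding $a_n\pm1$ in the definition of $N_\mathbf{a}$. I would match the first summand to $a_n$ "interior" slices each of which contributes $N_{(a_0,\ldots,a_{n-1})}$ by induction, and the second summand to the two families of "boundary" slices (namely $e=0$ and $e$ beyond the relevant threshold in $a_{n-1}+a_n$), where only the trivial primary generators of $M'_\mathbf{a}$ are active and a direct count in $\R[x_0,\ldots,x_{n-2}]$ produces $\tfrac12(\prod(a_i+1)+\prod(a_i-1))$. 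Verifying this boundary contribution, and in particular that the two kinds of extremal slices cleanly partition into the symmetric sum above, is the main bookkeeping hurdle; once this is done the induction closes and the degree formula follows.
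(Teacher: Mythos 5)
Your reduction is the same as the paper's: by the Stanley decomposition $T/M_\mathbf{a}=\bigoplus_{\mathbf{x}^\alpha\notin M_\mathbf{a}}\mathbf{x}^\alpha\,\R[x_n]$ the degree equals the number of standard monomials in $\R[x_0,\ldots,x_{n-1}]$, and the base case $n=1$ is identical. The genuine gap is exactly in the bookkeeping you flag as the "main obstacle", and as described it fails. If you slice by the exponent $c_{n-1}=e$ of $x_{n-1}$, the sliced ideal in $\R[x_0,\ldots,x_{n-2}]$ is generated by the $\beta$-monomials of those $\mathbf{i}$ with $\beta_{n-1}(\mathbf{a},\mathbf{i})\le e$; since $\beta_{n-1}=0$ for $i_{n-1}=0$ and $\beta_{n-1}=a_{n-1}-a_n-2+4i_{n-1}$ otherwise, the slice at level $e$ is $M_{(a_0,\ldots,a_{n-2},\,a_n-2j)}$ for the largest admissible $j=j(e)$, i.e.\ the \emph{last variable is deleted and the last exponent $a_n$ is lowered}. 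It is never $M_{(a_0,\ldots,a_{n-1})}$, and indeed $(a_0,\ldots,a_{n-1})$ may end in an even number, so the inductive hypothesis (which only concerns $M_\mathbf{b}$ with odd last entry) does not even apply to it. Concretely, for $\mathbf{a}=(3,3,3)$ one has $M_\mathbf{a}=(x_0^6,x_0^4x_1^2,x_1^6)$ and the slices by $c_1=e$ have sizes $6,6,4,4,4,4$ (total $28$); your proposed split $a_n\cdot N_{(a_0,\ldots,a_{n-1})}+\tfrac12\bigl(\prod_{i<n}(a_i+1)+\prod_{i<n}(a_i-1)\bigr)=3\cdot 6+10$ cannot be realized by any grouping of these slices, so the claimed matching "each interior slice contributes $N_{(a_0,\ldots,a_{n-1})}$" is false, even though the displayed recursion is a correct algebraic identity.

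The route can be repaired, but with different arithmetic: with the correct identification the $x_{n-1}$-slicing gives
\begin{equation*}
N_\mathbf{a}=(a_{n-1}-a_n+2)\,N_{(a_0,\ldots,a_{n-2},a_n)}+4\sum_{j=1}^{(a_n-1)/2} N_{(a_0,\ldots,a_{n-2},a_n-2j)},
\end{equation*}
which telescopes to the closed form (each slice ideal has odd last entry, so induction applies). The paper avoids this altogether by slicing on the exponent of $x_0$ instead, where the slice ideals are $M_{(a_1-2j,\ldots,a_n-2j)}$ occurring with multiplicities $a_0+1$ and then $2$, and the analogous telescoping sum closes the induction. So either fix your slice identification and redo the sum as above, or switch to the $x_0$-slicing; as written, the inductive step does not go through.
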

\begin{proof}
We use induction on the number of variables. For $n=1$, $M_\mathbf{a}=(x_0^{a_0+a_1})$. Hence $\deg(M_\mathbf{a})$ is $a_0+a_1=\frac{1}{2}((a_0+1)(a_1+1)-(a_0-1)(a_1-1))$, which satisfies the formula. Now, suppose that for fewer variables, the statement is true. We first recall that one can have a generating set of $M_\mathbf{a}$ in which there is no generator having $x_n$ as mentioned above. Further, $x_k^{a_k+a_n}=\mathbf{x}^{\beta(\mathbf{a},\lambda_\mathbf{a}\mathbf{e}_k)}$ belongs to $M_\mathbf{a}$ for each $k=0,\ldots,n-1$. So, $T/M_\mathbf{a}$ admits a Stanley decomposition as 
$$T/M_\mathbf{a}=\bigoplus_{\mathbf{x}^\alpha\in N_\mathbf{a}} \mathbf{x}^\alpha~\mathbb{R}[x_n]~,$$
where $N_\mathbf{a}:=\{\mathbf{x}^\alpha\in\mathbb{R}[x_0,\ldots,x_{n-1}]  :  \mathbf{x}^\alpha\notin M_\mathbf{a}\}$ which is a finite set. By Proposition \ref{Stanley}, the Hilbert function of $T/M_\mathbf{a}$ can be calculated by 
$$\HF(T/M_\mathbf{a},t)=\sum_\alpha \HF(\mathbb{R}[x_n],t-|\alpha|)$$ and for $\forall~t\gg0$ Hilbert polynomial of $T/M_\mathbf{a}$ can be calculated as the constant $|N_\mathbf{a}|$. So the number of elements of $N_\mathbf{a}$ gives the degree of $M_\mathbf{a}$.
Now we divide the elements of $N_\mathbf{a}$ into pieces by exponent of $x_0$ and count the number of elements. Let 
\begin{align*}
M_{\mathbf{a},d}:=\{m\in \mathbb{R}[x_1,\ldots,x_n] : x_0^dm\in M_\mathbf{a}\}~\textrm{and}~N_{\mathbf{a},d}:=\{n\in \mathbb{R}[x_1,\ldots,x_{n-1}] : n\notin M_{\mathbf{a},d} \}~.
\end{align*}
Then $M_{\mathbf{a},d}$ is a monomial ideal in $\mathbb{R}[x_1,\ldots,x_n]$ and $N_\mathbf{a}$ is a disjoint union \\$\bigcup_{d=0}^{a_0+a_n-1} N_{\mathbf{a},d}\cdot x_0^d$, for $\mathbf{x}^{\beta(\mathbf{a},\lambda_\mathbf{a}\mathbf{e}_0)}=x_0^{a_0+a_n}\in M_{\mathbf{a}}$ (i.e. $M_{\mathbf{a},a_0+a_n}=\mathbb{R}[x_1,\ldots,x_n]$). So, we have
\begin{equation}\label{deg_Na}
|N_{\mathbf{a},d}|=\deg\big(\mathbb{R}[x_1,\ldots,x_n]/M_{\mathbf{a},d}\big) ~\textrm{and}~|N_\mathbf{a}|=\sum_{d=0}^{a_0+a_n-1}\deg\big(\mathbb{R}[x_1,\ldots,x_n]/M_{\mathbf{a},d}\big)~.
\end{equation}

When $i_0=0$, the monomial generator $\mathbf{x}^{\beta(\mathbf{a},\mathbf{i})}$ has $\beta_0=0$ (i.e. the monomial contains no $x_0$), while $\beta_0=a_0-1+2 i_0\ge a_0+1$ if $i_0\ge1$. For other $\beta_k$, if $i_0=0$, then it is given by 
$$\beta_k(\mathbf{a},\mathbf{i})=
\begin{cases}
0~, & i_k=0\\
a_k-1-2(i_1+\cdots+i_{k-1})+2i_k, & \text{otherwise~.}
\end{cases}$$
Hence, we see that $M_{\mathbf{a},d}=M_{(a_1,a_2,\ldots,a_n)}$ for $d=0,\ldots,a_0$. Similarly, if $i_0=j$ ($j=1,\ldots,\lambda_\mathbf{a}$), the generators of $M_\mathbf{a}$ is given by $\mathbf{x}^{\beta(\mathbf{a},\mathbf{i})}$ with 
$$\beta_k(\mathbf{a},\mathbf{i})=
\begin{cases}
0~, & i_k=0\\
(a_k-2j)-1-2(i_1+\cdots+i_{k-1})+2i_k, & \text{otherwise}
\end{cases}\quad,$$
which implies that $M_{\mathbf{a},d}=M_{(a_1-2j,a_2-2j,\ldots,a_n-2j)}$ for $a_0+2j-1\leq d\leq a_0+2j$. Therefore, by (\ref{deg_Na}) and induction hypothesis, we compute the degree of $M_\mathbf{a}$ as
{\small
\begin{align*}
|N_\mathbf{a}|&=\sum_{d=0}^{a_0} \deg(M_{(a_1,a_2,\ldots,a_n)})+\sum_{d=a_0+1}^{a_0+2}\deg(M_{(a_1-2,a_2-2,\ldots,a_n-2)})+\cdots+\sum_{d=a_0+a_n-2}^{a_0+a_n-1}\deg(M_{(a_1-2\lambda_\mathbf{a},a_2-2\lambda_\mathbf{a},\ldots,a_n-\lambda_\mathbf{a})})\\
&=(a_0+1)\deg(M_{(a_1,a_2,\ldots,a_n)})+2\deg(M_{(a_1-2,a_2-2,\ldots,a_n-2)})+\cdots+2\deg(M_{(a_1-a_n+1,a_2-a_n+1,\ldots,1)})\\
&=(a_0+1)\frac{1}{2}\big(\prod_{i=1}^n(a_i+1)-\prod_{i=1}^n(a_i-1)\big)+2\cdot\frac{1}{2}\big(\prod_{i=1}^n(a_i-1)-\prod_{i=1}^n(a_i-3)\big)+\cdots+2\cdot\frac{1}{2}(\prod_{i=1}^n(a_i-a_n+3)-0)\\
&=\frac{a_0+1}{2}\prod_{i=1}^n(a_i+1)-\frac{a_0+1}{2}\prod_{i=1}^n(a_i-1)+\prod_{i=1}^n(a_i-1)=\frac{1}{2}\prod_{i=0}^n(a_i+1)-\frac{1}{2}\prod_{i=0}^n(a_i-1)~,
\end{align*}}
as claimed.
\end{proof}

\begin{lemma}\label{prop_deg_drop}
For any monomial $\mathbf{n}\notin M_\mathbf{a}$, $\deg(M_\mathbf{a})>\deg(M_\mathbf{a}+(\mathbf{n}))$.
\end{lemma}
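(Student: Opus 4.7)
The plan is to refine the Stanley decomposition of $T/M_\mathbf{a}$ used in the proof of Lemma~\ref{degree of monomial} into one for $T/(M_\mathbf{a}+(\mathbf{n}))$ and then extract the degree via Proposition~\ref{Stanley}. First I would write $\mathbf{n}=x_n^{e}\mathbf{n}'$ with $\mathbf{n}'\in\mathbb{R}[x_0,\ldots,x_{n-1}]$ and $e\ge 0$. Because every generator of $M_\mathbf{a}$ is $x_n$-free (as noted right after Definition~\ref{def_Ma}), the hypothesis $\mathbf{n}\notin M_\mathbf{a}$ is equivalent to $\mathbf{n}'\notin M_\mathbf{a}$, i.e.\ $\mathbf{n}'\in N_\mathbf{a}$.

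Next, set $S:=\{\mathbf{m}\in N_\mathbf{a}:\mathbf{n}'\mid\mathbf{m}\}$. A monomial of $T$ has the form $\mathbf{m}\,x_n^{f}$ with $\mathbf{m}\in\mathbb{R}[x_0,\ldots,x_{n-1}]$, and such a monomial lies in $M_\mathbf{a}+(\mathbf{n})$ precisely when $\mathbf{m}\in M_\mathbf{a}$, or else $\mathbf{m}\in S$ and $f\ge e$: divisibility by a generator of $M_\mathbf{a}$ constrains only $\mathbf{m}$ since those generators carry no $x_n$, while divisibility by $\mathbf{n}=x_n^{e}\mathbf{n}'$ forces both $\mathbf{n}'\mid\mathbf{m}$ and $f\ge e$. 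This yields the Stanley decomposition
\[
T/(M_\mathbf{a}+(\mathbf{n}))=\bigoplus_{\mathbf{m}\in N_\mathbf{a}\setminus S}\mathbf{m}\,\mathbb{R}[x_n]\;\oplus\;\bigoplus_{\mathbf{m}\in S}\bigoplus_{f=0}^{e-1}\mathbf{m}\,x_n^{f}\,\mathbb{R}.
\]

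Applying formula \eqref{hilb_for}, each finite-dimensional block $\mathbf{m}\,x_n^{f}\,\mathbb{R}$ contributes $0$ to the Hilbert polynomial, whereas each summand $\mathbf{m}\,\mathbb{R}[x_n]$ contributes $1$, so $\deg(M_\mathbf{a}+(\mathbf{n}))=|N_\mathbf{a}|-|S|$. The final step is the observation $|S|\ge 1$: indeed $\mathbf{n}'\in N_\mathbf{a}$ and trivially $\mathbf{n}'\mid\mathbf{n}'$, so $\mathbf{n}'\in S$. Combined with $\deg(M_\mathbf{a})=|N_\mathbf{a}|$ from Lemma~\ref{degree of monomial}, this yields the strict inequality $\deg(M_\mathbf{a})>\deg(M_\mathbf{a}+(\mathbf{n}))$. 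The only point requiring a little care is checking the claimed Stanley decomposition, but this is essentially automatic once one recalls that $M_\mathbf{a}$ is $x_n$-free.
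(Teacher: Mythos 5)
Your proof is correct and follows essentially the same route as the paper: you write $\mathbf{n}=x_n^{e}\mathbf{n}'$ with $\mathbf{n}'\in N_\mathbf{a}$ and refine the Stanley decomposition of $T/M_\mathbf{a}$ so that the monomials of $N_\mathbf{a}$ divisible by $\mathbf{n}'$ (your $S$, the paper's $N_\mathbf{a}\cap\Gamma_0$) contribute only finitely many $x_n$-powers, giving $\deg(M_\mathbf{a}+(\mathbf{n}))=|N_\mathbf{a}|-|S|<|N_\mathbf{a}|$. Your explicit verification of the decomposition and of $|S|\ge 1$ just spells out what the paper leaves implicit.
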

\begin{proof}
Let $\mathbf{n}$ be a monomial which is not contained in $M_\mathbf{a}$. It can be written as $\mathbf{n}=\mathbf{x}^{\gamma_0} x_n^k$ for some $\mathbf{x}^{\gamma_0}\in N_\mathbf{a}$ and $k\geq 0$. Set $\Gamma_0=\mathbb{R}[x_0,\ldots,x_{n-1}]\cdot\mathbf{x}^{\gamma_0}$ and $M'=M_\mathbf{a}+(\mathbf{n})$. Then, a Stanley decomposition of $T/M'$ can be given as
$$T/M'=\bigoplus_{\mathbf{x}^\alpha\in N_\mathbf{a}\backslash\Gamma_0}\mathbf{x}^\alpha~\mathbb{R}[x_n]~\oplus~  \bigoplus_{\mathbf{x}^\gamma\in N_\mathbf{a}\cap\Gamma_0}\left(\bigoplus_{i=0}^{k-1}\mathbf{x}^{\gamma}x_n^i~\mathbb{R}[\emptyset]\right)~,$$
which shows that the degree $\deg(M')$ is strictly less than $\deg(M_{_\mathbf{a}})$. 
\end{proof}

\begin{proposition}\label{initial supset}
For a sequence of positive integers $\mathbf{a}=(a_0,a_1,\ldots,a_n), ~a_0\geq a_1\geq \cdots\geq a_n$ with $a_n$ odd, $M_\mathbf{a}\subset in(J_\mathbf{a}(t))$ for all but finitely many real numbers $t$.
\end{proposition}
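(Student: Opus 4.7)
The plan is to construct, for every lattice point $\mathbf{i}\in\Delta^{n-1}(\lambda_\mathbf{a})$, an element $p_\mathbf{i}(t)\in J_\mathbf{a}(t)$ whose graded reverse lexicographic leading monomial equals $\mathbf{x}^{\beta(\mathbf{a},\mathbf{i})}$ up to a nonzero scalar that is a polynomial in $t$. Since $M_\mathbf{a}$ is generated by the monomials $\mathbf{x}^{\beta(\mathbf{a},\mathbf{i})}$ and $in(J_\mathbf{a}(t))$ is a monomial ideal, the containment $M_\mathbf{a}\subset in(J_\mathbf{a}(t))$ then holds whenever $t$ avoids the finite union of real zeros of these leading coefficients.

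I would first dispose of the vertex cases $\mathbf{i}=\lambda_\mathbf{a}\mathbf{e}_k$ ($0\le k\le n-1$), for which Definition \ref{beta} gives $\mathbf{x}^{\beta(\mathbf{a},\mathbf{i})}=x_k^{a_k+a_n}$. The expansion \eqref{G_expand} shows that $G_{k,n}^{\mathbf{a}}(t)$ has grevlex leading term $x_k^{a_k+a_n}$ with leading coefficient $C_{k,n,0}^{\mathbf{a}'}(t)=1$, so one can simply take $p_\mathbf{i}:=G_{k,n}^{\mathbf{a}}(t)$. This is consistent with the choice of grevlex: the generators of $M_\mathbf{a}$ carry no $x_n$, which matches the fact that grevlex prioritizes monomials with smaller $x_n$-exponent.

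For a general interior $\mathbf{i}$ I would apply Lemma \ref{prs} iteratively to pairs of generators of $J_\mathbf{a}(t)$ viewed as univariate polynomials in the topmost remaining variable, starting with $x_n$. After pulling out the factor $x_n^{\epsilon(a_n)}$, the relevant generators are even in $x_n$, so each subresultant step reduces the top-variable degree by an even amount. The entry $i_k$ of $\mathbf{i}$ would control how many reduction steps are carried out on the pair involving $x_k$, and the partial sums $s_{k-1}=i_0+\cdots+i_{k-1}$ appearing in Definition \ref{beta} would record the cumulative drops in the intermediate variables, matching the even shifts $2i_k$ and $2s_{k-1}$ in the formula for $\beta_k$. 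The leading monomial of $p_\mathbf{i}$ is then read off from \eqref{coeff_u,v}--\eqref{uf+vg}, and its leading coefficient is a determinant of a Sylvester-type submatrix whose entries are the Laurent polynomials $C_{i,j,d}^{\mathbf{a}}(t)$. Using the top-$t$-degree formula $Q_{i,j}^{\mathbf{a}}(d)$ from Remark \ref{F rmk}(3), this determinant is a nonzero element of $\mathbb{R}[t,t^{-1}]$ and so vanishes for only finitely many real $t$; the exceptional set in the statement is the union over $\mathbf{i}\in\Delta^{n-1}(\lambda_\mathbf{a})$ of these finitely many vanishing sets.

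The main obstacle will be the combinatorial verification that the iterated subresultant construction produces precisely the prescribed exponents $\beta_k(\mathbf{a},\mathbf{i})$ as the grevlex leading monomial, with no cancellation or overtaking from lower-indexed variables at any stage. This requires careful degree accounting, in which the top-$t$-degree formula $Q_{i,j}^{\mathbf{a}}(d)$ is the key tool that prevents the leading coefficients of intermediate polynomials from degenerating, and in which the grevlex order must be tracked at each reduction to guarantee that the intended monomial continues to dominate. A secondary subtlety is that the subresultants live only in pairs; to reach a general $\mathbf{i}$ one must chain the reductions across different pairs $(k,n)$ in a consistent order, which is exactly what the one-dimensional simplex parameter $\mathbf{i}$ encodes.
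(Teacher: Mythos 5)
Your overall strategy is the same as the paper's: exhibit, for each lattice point $\mathbf{i}\in\Delta^{n-1}(\lambda_\mathbf{a})$, a polynomial in $J_\mathbf{a}(t)$ whose grevlex leading term is $\mathbf{x}^{\beta(\mathbf{a},\mathbf{i})}$ times a nonzero element of $\R[t,t^{-1}]$, and let $t$ avoid the finitely many real zeros of these coefficients; your treatment of the vertex case via $G_{k,n}^{\mathbf{a}}$ is also exactly the paper's Step 1. But the proposal stops precisely where the real work begins, and you say so yourself: the ``combinatorial verification that the iterated subresultant construction produces precisely the prescribed exponents $\beta_k(\mathbf{a},\mathbf{i})$\,'' is not an obstacle to be noted, it \emph{is} the proof. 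The paper resolves it by a careful induction on $|\Supp^\#(\mathbf{i})|$ with two explicit moves: Move I is a single binary-form division combining $H((\lambda_\mathbf{a}-1)\mathbf{e}_k+\mathbf{e}_l)$ with $H(\tau(\mathbf{i}))$ (where $\tau$ collapses the second-smallest support index into the smallest), and Move II applies Lemma \ref{prs} not to two generators of $J_\mathbf{a}$ but to the pure power $f=x_k^{a_k+2(i_k+i_l)-1}$ and the binary-form tail $g$ of the previously built $H(\tau'(\mathbf{i}))$, after substituting $X=x_k^2$ and dehomogenizing at $x_l$. Your sketch of ``applying Lemma \ref{prs} iteratively to pairs of generators viewed as univariate polynomials in the topmost remaining variable, starting with $x_n$'' does not match this structure, and it is not clear it can be made to work as stated: the eliminations happen in the pair $(x_k,x_l)$, while $x_n$ and the discarded remainder terms are controlled only because grevlex (with $x_n$ last) pushes everything involving $x_n$, together with the tails $R_2,R_3$, below the intended leading monomial --- the bookkeeping via Remark \ref{beta rmk} and the monomial factor $\mathbf{m}$ is what guarantees ``no overtaking,'' and none of that is supplied.

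There is a second, more local gap: the nonvanishing of the leading coefficient in the subresultant step is asserted (``using $Q_{i,j}^{\mathbf{a}}(d)$, this determinant is nonzero''), but a determinant of nonzero Laurent polynomials can perfectly well vanish identically. The paper's argument is that the relevant matrix $N$ is of Toeplitz type with entries $D_i=C^{\mathbf{a}'}_{k,l,\frac{a_k'+a_l'}{2}-i}$, every term of $\det(N)$ is a product $D_{i_1}\cdots D_{i_s}$ with fixed index sum $s(\alpha-s)$, and since the top $t$-degree $-i^2+\gamma_1 i+\gamma_2$ is strictly concave in $i$, the maximal total top degree is attained \emph{only} by the diagonal term $D_{\alpha-s}^s$; hence no cancellation can occur in the top-degree part and $\det(N)\neq 0$ in $\R[t,t^{-1}]$. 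Citing the formula $Q_{i,j}^{\mathbf{a}}(d)$ without this uniqueness-of-maximizer argument does not establish the nonvanishing, so as written the proposal is an outline of the correct approach rather than a proof.
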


\begin{proof}
Let $\Supp(\mathbf{i})$ be the support of $\mathbf{i}\in \Delta^{n-1}(\lambda_\mathbf{a})$ and $\Supp^\#(\mathbf{i}):=\Supp(\mathbf{i})\cup\{n\}\subset\{0,1,\ldots,n\}$. We prove the statement by finding a polynomial $H(\mathbf{a},\mathbf{i})\in J_\mathbf{a}$ with the following three properties for any $\mathbf{i}\in \Delta^{n-1}(\lambda_\mathbf{a})$ in an inductive way. 
\begin{enumerate}
\item $\LT(H(\mathbf{a},\mathbf{i}))=A(\mathbf{a},\mathbf{i})\mathbf{x}^{\beta(\mathbf{a},\mathbf{i})}$, where $A(\mathbf{a},\mathbf{i})\in\R[t,t^{-1}]$ is a \textit{nonzero} leading coefficient.
\item Let $k\in \Supp^\#(\mathbf{i})$ be the smallest element. Then $H(\mathbf{a},\mathbf{i})\in \big(\{F_{k,k'}^\mathbf{a} : k'\in \Supp^\#(\mathbf{i}), k'\neq k\}\big)\subset J_\mathbf{a}$.
\item Moreover, $H(\mathbf{a},\mathbf{i})\in \mathbb{R}[t,t^{-1}][x_i,i\in \Supp^\#(\mathbf{i})]$
\end{enumerate}

To define $H(\mathbf{a},\mathbf{i})$, we use an induction on the size of $\Supp^\#(\mathbf{i})$ whose minimum is $2$. From now on, $H(\mathbf{i})$ stands for $H(\mathbf{a},\mathbf{i})$. We note that a nonzero element in $\R[t,t^{-1}]$ becomes nonzero by evaluating it at all but finitely many real numbers $t$. We also recall notations in Definition \ref{Gij}, \ref{beta} and Remark \ref{F rmk}. 
\vspace{2mm}

\noindent \textsc{Step 1 [Initial case I with $|\Supp^\#(\mathbf{i})|=2$]}

Suppose that $|\Supp^\#(\mathbf{i})|=2$. Then $\Supp(\mathbf{i})=\{k\}$ for some $0\leq k\leq n-1$ and $\mathbf{i}=\lambda_\mathbf{a}\mathbf{e}_k$. For this case, we find 
\begin{equation*}
H(\mathbf{i}):=G_{k,n}^\mathbf{a}=x_k^{a_k+a_n'}x_n^{\epsilon(a_n)}+C_{k,n,1}^{\mathbf{a}'}x_k^{a_k+a_n'-2}x_n^{2+\epsilon(a_n)}+\cdots=x_k^{a_k+a_n}+C_{k,n,1}^{\mathbf{a}'}x_k^{a_k+a_n-2}x_n^2+\cdots
\end{equation*} because $a_n$ is odd (here we briefly use notation $C_{k,n,d}^{\mathbf{a}'}$ instead of $C_{k,n,d}^{\mathbf{a}'}(t)$). Since $\mathbf{x}^{\beta(\mathbf{i})}=x_k^{a_k+a_n}$, $H(\mathbf{i})$ satisfies the properties (1) - (3). In case of $a_n=1$, everything is done in Step 1. So, in what follows, let us assume $a_n\ge3$.
\vspace{2mm}

\noindent\textsc{Step 2 [Initial case II with $|\Supp^\#(\mathbf{i})|=3$ and $\mathbf{i}=(\lambda_\mathbf{a}-1)\mathbf{e}_k+\mathbf{e}_l$]}

 Consider the another initial case $\mathbf{i}=(\lambda_\mathbf{a}-1)\mathbf{e}_k+\mathbf{e}_l$ for any $k<l$. Consider the division of $G_{k,l}^{\mathbf{a}}$ by $x_k^{a_k+a_n}$. Since they are homogeneous binary forms in $\mathbb{R}[t,t^{-1}][x_k,x_l]$, the division is same as the univariate case and the quotient is uniquely given. In particular, the quotient $q$ is given by $$q=\sum_{d=0}^{\frac{a_l'-a_n}{2}} C_{k,l,d}^{\mathbf{a}'}~ x_k^{a_l'-a_n-2d}x_l^{2d+\epsilon(a_l)}.$$ Define $H(\mathbf{i}):=G_{k,l}^{\mathbf{a}}-q\cdot G_{k,n}^{\mathbf{a}}$. Then, we see that 
 $$H(\mathbf{i})=\sum_{d=\frac{a_l'-a_n}{2}+1}^{\frac{a_k'+a_l'}{2}} C_{k,l,d}^{\mathbf{a}'}~ x_k^{a_k+a_l'-2d}x_l^{2d+\epsilon(a_l)}-q\cdot(G_{k,n}^{\mathbf{a}}-x_k^{a_k+a_n})~.$$
Since all the terms of the rear part involve $x_n$, the leading term should appear in the front. Hence $\LT(H(\mathbf{i}))=A(\mathbf{i})x_k^{a_k+a_n-2}x_l^{a_l-a_n+2}$ where $A(\mathbf{i})=C_{k,l,\frac{a_l'-a_n}{2}+1}^{\mathbf{a}'}$, which is nonzero. Note that $\mathbf{x}^{\beta(\mathbf{i})}=x_k^{a_k+a_n-2}x_l^{a_l-a_n+2}$. So, $H(\mathbf{i})$ satisfies the properties (1) - (3) of the polynomial $H$.

\vspace{2mm}

\noindent\textsc{Step 3 [Induction hypothesis and $\tau$-projection]}

Assume that $|\Supp^\#(\mathbf{i})|=m$ and all the polynomials $H(\mathbf{j})$ are already determined for all $\mathbf{j}$ with $|\Supp^\#(\mathbf{j})|<m$. Now, let us define a projection map $\tau$ on points in the simplex except vertices by $\tau(\mathbf{i})=\mathbf{i}+i_{l}\mathbf{e}_k-i_{l}\mathbf{e}_{l}$ where $k,l$ are the smallest index and the second smallest one in $\Supp^\#(\mathbf{i})$ respectively. Since $|\Supp^\#(\tau(\mathbf{i}))|=|\Supp^\#(\mathbf{i})\backslash\{l\}|=m-1$, we already have $H(\tau(\mathbf{i}))$ by the induction hypothesis. $H(\tau(\mathbf{i}))$ is a polynomial in $\mathbb{R}[t,t^{-1}][x_i,i\in \Supp^\#(\tau(\mathbf{i}))]$ where its leading term is 
$$\LT(H(\tau(\mathbf{i})))=A(\tau(\mathbf{i}))\mathbf{x}^{\beta(\tau(\mathbf{i}))}=A(\tau(\mathbf{i}))x_k^{a_k+2(i_k+i_l)-1}\mathbf{m}$$
where $\mathbf{m}$ is the remaining monomial part in $\mathbb{R}[x_i,i\in \Supp^\#(\mathbf{i})\backslash\{k,l\}]$. Note that $$\mathbf{x}^{\beta(\mathbf{i})}=x_k^{a_k+2i_k-1}x_l^{a_l-2i_k+2i_l-1}\mathbf{m}$$ for the same monomial $\mathbf{m}$ by Remark \ref{beta rmk}.
\vspace{2mm}

\noindent\textsc{Step 4 [$\mathsf{Move~I}$ - using two polynomials from Step 2 and 3]} 

In this step, using two polynomials $H\left((\lambda_\mathbf{a}-1)\mathbf{e}_k+\mathbf{e}_l\right)$ and $H(\tau(\mathbf{i}))$, we define $$H(\tau'(\mathbf{i})):=H(\tau(\mathbf{i})-\mathbf{e}_k+\mathbf{e}_l)~.$$ For the extreme case $i_k+i_l=\lambda_\mathbf{a}$, $\tau(\mathbf{i})$ is equal to $\lambda_\mathbf{a}\mathbf{e}_k$ so that $H(\tau'(\mathbf{i}))=H((\lambda_\mathbf{a}-1)\mathbf{e}_k+\mathbf{e}_l)$ which was already done in Step 2. Hence, suppose that $2\le i_k+i_l\leq \lambda_\mathbf{a}-1$. Write 
\begin{align}\label{Hpart}
\ds H\left((\lambda_\mathbf{a}-1)\mathbf{e}_k+\mathbf{e}_l\right)=\sum_{d=\frac{a_l'-a_n}{2}+1}^{\frac{a_k'+a_l'}{2}} C_{k,l,d}^{\mathbf{a}'} ~x_k^{a_k+a_l'-2d}x_l^{2d+\epsilon(a_l)}+R_1
\end{align}
where all terms in $R_1$ involving $x_n$ as in Step 2 and 
$H(\tau(\mathbf{i}))=A(\tau(\mathbf{i}))x_k^{a_k+2(i_k+i_l)-1}\mathbf{m}+R_2$ where $R_2$ is the remaining part except the leading term. Consider the division of the front part of $H\left((\lambda_\mathbf{a}-1)\mathbf{e}_k+\mathbf{e}_l\right)\in\mathbb{R}[t,t^{-1}][x_k,x_l]$ (i.e. terms except $R_1$ in (\ref{Hpart})) by $x_k^{a_k+2(i_k+i_l)-1}$ when regarding both dividend and divisor as a univariate polynomial in $x_k$. Then, the quotient $q'$ is given by $\ds q'=\sum_{d=d_{\min}}^{d_{\max}} C_{k,l,d}^{\mathbf{a}'} ~x_k^{2d_{\max}-2d}x_l^{2d+\epsilon(a_l)}$, where $d_{\min}=\frac{a_l'-a_n}{2}+1, d_{\max}=\frac{(a_k+a_l')-(a_k+2(i_k+i_l)-1)}{2}=\frac{a_l'-2(i_k+i_l)+1}{2}$.
Using this quotient, define 
\begin{align}\label{Htau'}\nonumber
H(\tau'(\mathbf{i})):&=A(\tau(\mathbf{i}))\mathbf{m}\cdot H\left((\lambda_\mathbf{a}-1)\mathbf{e}_k+\mathbf{e}_l\right)-q'\cdot H(\tau(\mathbf{i}))\\
&=A(\tau(\mathbf{i}))\mathbf{m}\cdot\left(\sum_{d=d_{\max}+1}^{\frac{a_k'+a_l'}{2}}C_{k,l,d}^{\mathbf{a}'}~x_k^{a_k+a_l'-2d}x_l^{2d+\epsilon(a_l)}+R_1\right)-q'\cdot R_2~.
\end{align}
Since $x_k^{a_k+2(i_k+i_l)-1}\mathbf{m}\succ_{grevlex}$ any term in $R_2$, $x_k^{a}x_l^{b}\mathbf{m}$ is always larger than any term in $q'\cdot R_2$ whenever both have the same total degree. Thus, the leading term of $H(\tau'(\mathbf{i}))$ should appear in the front part of (\ref{Htau'}) and $\ds\LT(H(\tau'(\mathbf{i})))=A(\tau(\mathbf{i}))\mathbf{m}\cdot C_{k,l,d_{\max}+1}^{\mathbf{a}'}~x_k^{a_k+2(i_k+i_l)-3}x_l^{a_l-2(i_k+i_l)+3}$.
Since the leading coefficient \\$A(\tau'(\mathbf{i}))=A(\tau(\mathbf{i}))C_{k,l,d_{\max}+1}^{\mathbf{a}'}$ is nonzero in $\R[t,t^{-1}]$ and
$$\mathbf{x}^{\beta(\tau'(\mathbf{i}))}=x_k^{a_k+2(i_k+i_l-1)-1}x_l^{a_l-2(i_k+i_l-1)+2-1}\mathbf{m}$$
with the same monomial $\mathbf{m}$ as above by Remark \ref{beta rmk}, $H(\tau'(\mathbf{i}))$ satisfies property (1). It is easy to check that it also has the other properties (2) - (3) as well. 
\vspace{2mm}

\noindent\textsc{Step 5 [$\mathsf{Move~II}$ - using two polynomials from Step 3 and 4]} 

In the final step, using $H(\tau(\mathbf{i}))$ and $H(\tau'(\mathbf{i}))$, we define $H(\tau(\mathbf{i})-s\mathbf{e}_k+s\mathbf{e}_l)$ for any $s=2,\ldots,i_k+i_l-1$. In particular, if we put $s=i_l$, then $H(\tau(\mathbf{i})-i_l\mathbf{e}_k+i_l\mathbf{e_l})=H(\mathbf{i})$ and the proof can be completed by the induction.

Recall that from \textsc{Step 3} and \textsc{4} we can write $H(\tau(\mathbf{i}))=A(\tau(\mathbf{i}))x_k^{a_k+2(i_k+i_l)-1}\mathbf{m}+R_2$ and
$$H(\tau'(\mathbf{i}))=A(\tau(\mathbf{i}))\sum_{d=d_{\max}+1}^{\frac{a_k'+a_l'}{2}}C_{k,l,d}^{\mathbf{a}'}~x_k^{a_k+a_l'-2d}x_l^{2d+\epsilon(a_l)}\mathbf{m}+R_3~.$$
Consider two homogeneous binary forms $$f=x_k^{a_k+2(i_k+i_l)-1}
\qquad \textrm{and} \qquad g=\sum_{d=d_{\max}+1}^{\frac{a_k'+a_l'}{2}}C_{k,l,d}^{\mathbf{a}'}~x_k^{a_k+a_l'-2d}x_l^{2d+\epsilon(a_l)}~.$$ To simplify the situation, divide them by $x_k^{\epsilon(a_k)}$, substitute $x_k^2=X$ and dehomogenize them by putting $x_l=1$. Then we have $\ds\tilde{f}=X^\alpha, ~\tilde{g}=\sum_{i=0}^{\alpha-1}D_iX^i$ where $D_i=C_{k,l,\frac{a_k'+a_l'}{2}-i}^{\mathbf{a}'}$ if we put $\alpha:=\frac{a_k'+2(i_k+i_l)-1}{2}$. Now, since $\tilde{f}$ and $\tilde{g}$ are univariate polynomials with degree $\alpha$ and $\alpha-1$ respectively, we can apply Lemma \ref{prs}. Then, for $i=\alpha-s+1$ ($2\leq s\leq i_k+i_l-1$), there exist two polynomials $\tilde{u}_{\alpha-s+1},\tilde{v}_{\alpha-s+1}$ such that $\tilde{h}=\tilde{u}_{\alpha-s+1}\tilde{f}+\tilde{v}_{\alpha-s+1}\tilde{g}$ have degree less then or equal to $\alpha-s$. Further, each coefficient of $\tilde{h}$ is given by the determinant of a corresponding submatrix of the Sylvester matrix of $\tilde{f}$ and $\tilde{g}$ as in (\ref{uf+vg}). In this case, the corresponding submatrix is given by the $(\alpha+s-1)\times (2s-1)$ matrix
$$\small{M=
\begin{pmatrix*}[l]
1  & 0 & \cdots & 0			&	D_{\alpha-1} & 0 & \cdots & 0					\\
0 & 1 & \cdots & 0			&	D_{\alpha-2} & D_{\alpha-1} & \cdots & 0					\\
\vdots & \vdots & \ddots & \vdots&	\vdots & \vdots & \ddots & \vdots					\\
0 & 0 & \cdots & 1			&	D_{\alpha-s+1} & D_{\alpha-s+2} & \cdots & 0					\\
0 & 0 & \cdots & 0			&	D_{\alpha-s} & D_{\alpha-s+1} & \cdots & D_{\alpha-1}					\\
\vdots & \vdots & \ddots & \vdots&		\vdots & \vdots & \ddots & \vdots				\\
0 & 0 & \cdots & 0 			&	0 & 0& \cdots & D_0
\end{pmatrix*}.}
$$
In particular, the leading coefficient of $\tilde{h}$ is equal to the determinant of $\widetilde{M}_{0}$, the top $(2s-1)\times(2s-1)$ submatrix of $M$,  by (\ref{uf+vg}). Now, we show that $\deg\tilde{h}=\alpha-s$ (i.e. $\det(\widetilde{M}_{0})$ is nonzero in $\R[t,t^{-1}]$). Since the top left $(s-1)\times(s-1)$ submatrix of $\widetilde{M}_{0}$ is the identity matrix and only zero entries in below, it is enough to calculate the determinant of the bottom right $s\times s$ submatrix such as
$$\small{N:=\begin{pmatrix*}[l]
D_{\alpha-s} & D_{\alpha-s+1} & \cdots & D_{\alpha-1} \\
D_{\alpha-s-1} & D_{\alpha-s} & \cdots & D_{\alpha-2} \\
\vdots & \vdots & \ddots & \vdots\\
D_{\alpha-2s+1} & D_{\alpha-2s+2} & \cdots & D_{\alpha-s}
\end{pmatrix*}~.}
$$
We know that the top degree part of $D_i\in\R[t,t^{-1}]$ has degree $Q_{k,l}^{\mathbf{a}'}(\frac{a_k'+a_l'}{2}-i)$, which is $-i^2+\gamma_1 i +\gamma_2$ for some constant $\gamma_1, \gamma_2$ (see Remark \ref{F rmk} (3)). And each term of the determinant of $N$ is of the form $D_{i_1}D_{i_2}\cdots D_{i_s}$ with $i_1+i_2+\cdots+i_s=s(\alpha-s)$. The top degree of this term is given by
\begin{equation}\label{top_det_N}
\sum_{i=i_1,\ldots,i_s}(-i^2+\gamma_1 i +\gamma_2)=\sum_{i=i_1,\ldots,i_s}(-i^2)+\gamma_1 s(\alpha-s) +\gamma_2 s ~.
\end{equation}
Since the sum $i_1+i_2+\cdots+i_s$ is fixed, the maximum of (\ref{top_det_N}) is achieved only when all $i_1=i_2=\cdots=i_s=\alpha-s$. Hence the top degree part of $\det(N)$ comes only from the diagonal term $D_{\alpha-s}^s$ which is not identically zero in $\R[t,t^{-1}]$. Therefore, the leading coefficient of $\tilde{h}$, $\det(N)$ is non-zero and $\deg\tilde{h}=\alpha-s$. Further, by the proof of Lemma \ref{prs} the leading coefficients of $\tilde{u}_{\alpha-s+1}$ and $\tilde{v}_{\alpha-s+1}$ are given up to sign by $D_{\alpha-1}\cdot\det(\overline{N})$ and $\det(\overline{N})$ respectively, where $\det(\overline{N})$ is determinant of the top right $(s-1)\times(s-1)$ submatrix of $N$ which is nonzero in $\R[t,t^{-1}]$ in a similar manner as for $\det(N)$. So, we also have $\deg\tilde{u}_{\alpha-s+1}=s-2$ and $\deg\tilde{v}_{\alpha-s+1}=s-1$.

Let $u,v,h$ be the homogenization of $\tilde{u}_{\alpha-s+1},\tilde{v}_{\alpha-s+1},\tilde{h}$ by $x_l$ and substitute $X$ by $x_k^2$, respectively. Then we have $u f x_l^{a_l-2(i_k+i_l)+3}+vg=h x_k^{\epsilon(a_k)}x_l^{a_l-2(i_k+i_l)+4s-1}$ and $h$ is a degree $2(\alpha-s)$ homogeneous binary form in $\mathbb{R}[t,t^{-1}][x_k,x_l]$ with $\LT(h x_k^{\epsilon(a_k)})=\det(N)x_k^{2\alpha-2s}x_k^{\epsilon(a_k)}=\det(N)x_k^{a_k+2(i_k+i_l-s)-1}$. Now let us set
\begin{align*}
H(\tau(\mathbf{i})-s\mathbf{e}_k+s\mathbf{e}_l)&:=u\cdot x_l^{a_l-2(i_k+i_l)+3}\cdot H(\tau(\mathbf{i}))+v\cdot H(\tau'(\mathbf{i}))\\
&=u\cdot x_l^{a_l-2(i_k+i_l)+3}(A(\tau(\mathbf{i}))f\mathbf{m}+R_2)+v\cdot(A(\tau(\mathbf{i}))g\mathbf{m}+R_3)\\
&=A(\tau(\mathbf{i}))(u f x_l^{a_l-2(i_k+i_l)+3}+vg)\mathbf{m}+ux_l^{a_l-2(i_k+i_l)+3}R_2+v R_3\\
&=A(\tau(\mathbf{i}))h x_k^{\epsilon(a_k)}x_l^{a_l-2(i_k+i_l)+4s-1}\mathbf{m}+R_4~.
\end{align*}

Similarly, the leading term appears in the front part and it is given by
$$\LT(H(\tau(\mathbf{i})-s\mathbf{e}_k+s\mathbf{e}_l))=A(\tau(\mathbf{i}))\det(N)\cdot x_k^{a_k+2(i_k+i_l-s)-1}x_l^{a_l-2(i_k+i_l-s)+2s-1}\mathbf{m}$$
while $\mathbf{x}^{\beta(\tau(\mathbf{i})-s\mathbf{e}_k+s\mathbf{e}_l)}$ is $x_k^{a_k+2(i_k+i_l-s)-1}x_l^{a_l-2(i_k+i_l-s)+2s-1}\mathbf{m}$ by Remark \ref{beta rmk}. So, $H(\tau(\mathbf{i})-s\mathbf{e}_k+s\mathbf{e}_l)$ satisfies property (1) and (2)-(3) is also immediate from the construction.
\end{proof}

We present some examples illustrating the process in the proof of Proposition \ref{initial supset} concretely.
 
\begin{example}
Let $n=3$ and $\mathbf{a}=(5,5,5,5)$, $\lambda_\mathbf{a}=\frac{5+1}{2}=3$. Take $t=2$.\\
Then the simplex $\Delta^{3-1}(\lambda_\mathbf{a})=\Delta^2(3)$ consists of the following $10$ integer lattice points 
$$(3,0,0),(2,1,0),(1,2,0),(0,3,0),(2,0,1),(1,1,1),(0,2,1),(1,0,2),(0,1,2),(0,0,3).$$
From these points, the monomial ideal $M_\mathbf{a}$ is generated by 10 monomials
$$(x_0^{10},x_0^8x_1^2,x_0^6x_1^6,x_1^{10},x_0^8x_2^2,x_0^6x_1^4x_2^2,x_1^8x_2^2,x_0^6x_2^6,x_1^6x_2^6,x_2^{10})$$
To follow the proof of Proposition \ref{initial supset}, we assign a polynomial $H(\mathbf{i})\in J_{\mathbf{a}}(t)$ for each lattice point $\mathbf{i}$ such that the leading monomial of $H(\mathbf{i})$ is $\mathbf{x}^{\beta(\mathbf{a},\mathbf{i})}$, a generator of $M_\mathbf{a}$. 

First, $H(3,0,0), H(2,1,0),H(0,3,0),H(2,0,1),H(0,2,1)$ and $H(0,0,3)$ can be obtained by the \textsc{Step 1} and \textsc{Step 2}. To get $H(1,1,1)$, we need to apply \textsf{Move I} in \textsc{Step 4} on the polynomials $H(2,1,0)$ and $H(2,0,1)$. They are given by the following form
\begin{align*}
&H(2,1,0)=F_{0,1}^\mathbf{a}-F_{0,3}^\mathbf{a}=-\frac{341}{16}x_0^8x_1^2+\frac{5797}{64}x_0^6x_1^4+\cdots-x_1^{10}+R_1\\
&H(2,0,1)=F_{0,2}^\mathbf{a}-F_{0,3}^\mathbf{a}=-\frac{341}{16}x_0^8x_2^2+\frac{5797}{64}x_0^6x_2^4+\cdots-x_2^{10}+R_2
\end{align*}
by the \textsc{Step 2} (note that $\beta((2,1,0))=(5+2\cdot2-1,5-2\cdot2+2\cdot1-1,0)=(8,2,0)$ and $\beta((2,0,1))=(5+2\cdot2-1,0,5-2\cdot2+2\cdot1-1)=(8,0,2)$, as desired). To apply \textsf{Move I} in \textsc{Step 4}, we need to calculate $q'=-341/16x_1^2$. Using this, we obtain 
\begin{align*}
H(1,1,1)&=A(2,0,1)\mathbf{m}\cdot H(2,1,0)-q'H(2,0,1)\\
&=-\frac{341}{16}x_2^2H(2,1,0)+\frac{341}{16}x_1^2H(2,0,1)=A(1,1,1)x_0^6x_1^4x_2^2+\frac{1976777}{1024}x_0^4x_1^6x_2^2+\cdots
\end{align*}
Since $\beta((1,1,1))=(5+2\cdot1-1,5-2\cdot1+2\cdot1-1,5-2\cdot(1+1)+2\cdot1-1)=(6,4,2)$, it satisfies the properties of the polynomial $H$. 

Finally, to construct $H(1,2,0)$, we're going to apply \textsf{Move II} with $s=2$ on the polynomials $H(3,0,0)$ and $H(2,1,0)$. $H(3,0,0)=F_{0,3}^\mathbf{a}=x_0^{10}+\cdots$ and $H(2,1,0)$ is given as above. Then, from these polynomials \textsf{Move II} with $s=2$ produces $H(1,2,0)$ as follows
\begin{align*}
H(1,2,0)&=(-\frac{341}{16})^2x_1^2\cdot H(3,0,0)+(\frac{341}{16}x_0^2+\frac{5797}{64}x_1^2)\cdot H(2,1,0)\\
&=\frac{25698101}{4096}x_0^6x_1^6-\frac{31744713}{4096}x_0^4x_1^8+\cdots.
\end{align*}
Note that $\beta((1,2,0))=(5+2\cdot 1-1,5-2\cdot 1+2\cdot 2-1,0)=(6,6,0)$ with the properties for $H$.
\end{example}

\begin{example}
We sketch a general scheme for $H$ polynomials as exemplifying more complicate cases. For brevity, we denote the \textsf{Move I} in \textsc{Step 4} by $A_1$ and \textsf{Move II} in \textsc{Step 5} with $s$ by $A_2(s)$. 
\begin{enumerate}
\item[(a)] Let $n=3$ and $\mathbf{a}=(17,15,11,9)$. The simplex $\Delta^{3-1}(\frac{9+1}{2})=\Delta^2(5)$ is consists of $21$ lattice points. If we want to obtain the polynomial $H(2,2,1)$, we can follow steps as below. 
\begin{enumerate}
\item[(i)] Calculate $H(4,0,1)$ and $H(4,1,0)$ by \textsc{Step 2}.
\item[(ii)] Applying the \textsf{Move I} in \textsc{Step 4} with $H(4,0,1)$ and $H(4,1,0)$, we obtain $H(3,1,1)$.
$$H(4,0,1)+H(4,1,0)\xrightarrow{A_1} H(3,1,1)$$
\item[(iii)] Applying the \textsf{Move II} in \textsc{Step 5} with $H(4,0,1)$ and $H(3,1,1)$, we obtain $H(2,2,1)$.
$$H(4,0,1)+H(3,1,1)\xrightarrow{A_2(2)} H(2,2,1)$$
\end{enumerate}


\item[(b)] Let $n=4$ and $\mathbf{a}=(74,68,64,55,49)$. In case of finding the polynomial $H(3,9,6,7)$, one can do that as the following steps.
\begin{align*}
&H(25,0,0,0)+H(24,0,0,1)\xrightarrow{A_2(7)} H(18,0,0,7)~,~ H(18,0,0,7)+H(24,0,1,0)\xrightarrow{A_1} H(17,0,1,7)~,\\
&H(18,0,0,7)+H(17,0,1,7)\xrightarrow{A_2(6)} H(12,0,6,7)~,~ H(12,0,6,7)+H(24,1,0,0)\xrightarrow{A_1} H(11,1,6,7)~,\\
&H(12,0,6,7)+H(11,1,6,7)\xrightarrow{A_2(9)} H(3,9,6,7)~.
\end{align*}
\end{enumerate}
\end{example}

As mentioned at the beginning of the subsection, now we show that $M_\mathbf{a}$ coincides with $in(J_\mathbf{a})$ for the case of $a_n$ odd.

\begin{proposition}\label{in_J_a_odd}
For a sequence of positive integers $\mathbf{a}=(a_0,a_1,\ldots,a_n),~a_0\ge\ldots\ge a_n$ with $a_n$ odd, the initial ideal $in(J_\mathbf{a}(t))$ is same as $M_\mathbf{a}$ for all but finitely many real number $t$.
\end{proposition}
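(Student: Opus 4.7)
The plan is to combine Proposition~\ref{initial supset} with a Hilbert function sandwich that uses Proposition~\ref{Ja}, Lemma~\ref{degree of monomial}, and Lemma~\ref{prop_deg_drop}. First I would fix a real $t$ outside the finite exceptional set of Proposition~\ref{initial supset} and with $t\neq 0,\pm 1$, so that both $M_\mathbf{a}\subseteq in(J_\mathbf{a}(t))$ holds and $V(J_\mathbf{a}(t))$ consists of exactly $\tfrac{1}{2}\bigl(\prod (a_i+1)-\prod (a_i-1)\bigr)$ distinct real points.

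Then I would set up, for $d\gg 0$, the chain
\begin{equation*}
|V(J_\mathbf{a}(t))| \;=\; \HF\bigl(T/I_{V(J_\mathbf{a}(t))},d\bigr) \;\leq\; \HF\bigl(T/J_\mathbf{a}(t),d\bigr) \;=\; \HF\bigl(T/in(J_\mathbf{a}(t)),d\bigr) \;\leq\; \HF(T/M_\mathbf{a},d) \;=\; \deg(M_\mathbf{a}).
\end{equation*}
The first equality is the standard stabilization of the Hilbert function of the radical ideal of a finite point set; the first inequality uses $J_\mathbf{a}(t)\subseteq I_{V(J_\mathbf{a}(t))}$ via monotonicity of Hilbert functions under ideal containment; the middle equality is the fact that Gröbner degeneration preserves Hilbert functions; the second inequality uses $M_\mathbf{a}\subseteq in(J_\mathbf{a}(t))$ from Proposition~\ref{initial supset}; the last equality follows from the Stanley decomposition $T/M_\mathbf{a}=\bigoplus_{\alpha\in N_\mathbf{a}}\mathbf{x}^{\alpha}\mathbb{R}[x_n]$ used in the proof of Lemma~\ref{degree of monomial}, via Proposition~\ref{Stanley}. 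By Lemma~\ref{degree of monomial} combined with Proposition~\ref{Ja}, the two extreme terms of this chain coincide, so every inequality must be an equality; in particular $\HF(T/in(J_\mathbf{a}(t)),d)=\deg(M_\mathbf{a})$ for $d\gg 0$.

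To conclude $in(J_\mathbf{a}(t))=M_\mathbf{a}$, I would argue by contradiction. Suppose $in(J_\mathbf{a}(t))\supsetneq M_\mathbf{a}$. Since initial ideals are monomial, some monomial $\mathbf{n}$ lies in $in(J_\mathbf{a}(t))\setminus M_\mathbf{a}$, whence $in(J_\mathbf{a}(t))\supseteq M_\mathbf{a}+(\mathbf{n})$. The Stanley decomposition of $T/(M_\mathbf{a}+(\mathbf{n}))$ exhibited in the proof of Lemma~\ref{prop_deg_drop} shows that $\HF(T/(M_\mathbf{a}+(\mathbf{n})),d)$ stabilizes for $d\gg 0$ to $\deg(M_\mathbf{a}+(\mathbf{n}))$, and Lemma~\ref{prop_deg_drop} asserts this value is strictly less than $\deg(M_\mathbf{a})$. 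Monotonicity then forces $\HF(T/in(J_\mathbf{a}(t)),d)\leq \HF(T/(M_\mathbf{a}+(\mathbf{n})),d)<\deg(M_\mathbf{a})$ for $d\gg 0$, contradicting the equality established above.

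The main obstacle in this argument is not located in the present proposition but in the preceding ingredients: the elaborate inductive construction of the lift polynomials $H(\mathbf{a},\mathbf{i})$ in Proposition~\ref{initial supset}, the inductive monomial-ideal degree count in Lemma~\ref{degree of monomial}, and the explicit Stanley decomposition in Lemma~\ref{prop_deg_drop}. All that remains here is to assemble these pieces into the sandwich above and to intersect the various ``all but finitely many $t$'' conditions so that a single cofinite set of valid parameters suffices.
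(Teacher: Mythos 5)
Your proof is correct and follows essentially the same route as the paper: one containment from Proposition~\ref{initial supset}, then a degree comparison via Proposition~\ref{Ja} and Lemma~\ref{degree of monomial}, with Lemma~\ref{prop_deg_drop} forcing a contradiction if the containment were strict. You merely spell out the Hilbert-function chain that the paper phrases compactly in terms of $\deg(J_\mathbf{a})\ge |V(J_\mathbf{a})|=\deg(M_\mathbf{a})$, so no substantive difference.
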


\begin{proof}
First of all, we know $in(J_\mathbf{a}(t))\supset M_\mathbf{a}$ for all real $t$ with finite exceptions by Proposition \ref{initial supset}. Also, by Proposition \ref{Ja} and Lemma \ref{degree of monomial} we have 
$$deg(J_\mathbf{a})\geq |V(J_\mathbf{a})|=\frac{1}{2}\left(\prod_{i=0}^n(a_i+1)-\prod_{i=0}^n(a_i-1)\right)=\deg(M_\mathbf{a})~.$$ 
Suppose that $in(J_\mathbf{a})\neq M_\mathbf{a}$. Then there exists a polynomial $f\in J_\mathbf{a}$ with $\mathbf{n}:=\LT(f)\notin M_\mathbf{a}$ so that
$in(J_\mathbf{a})\supset M_\mathbf{a}+(\mathbf{n})\supset M_\mathbf{a}$.
Since both $in(J_\mathbf{a})$ and $M_\mathbf{a}$ are 0-dimensional ideals, by Lemma \ref{prop_deg_drop}, we derive
$$\deg(J_\mathbf{a})=\deg(in(J_\mathbf{a}))\leq \deg(M_\mathbf{a}+(\mathbf{n}))<\deg(M_\mathbf{a})~,$$ which is a contradiction. Therefore, $in(J_\mathbf{a})=M_\mathbf{a}$. 
\end{proof}

Next, we describe $in(J_\mathbf{a})$ for a sequence $\mathbf{a}$ with even $a_n$.

\begin{proposition}\label{in_J_a_even}
Let $\mathbf{a}$ be a sequence $\mathbf{a}=(a_0,a_1,\ldots,a_n),~a_0\ge\ldots\ge a_n$ as above. If $a_n$ is even, then for all but finitely many real number $t$ the initial ideal $in(J_\mathbf{a}(t))$ can be generated by
$$in(J_{(a_0,a_1,\ldots,a_{n-1})})\cup x_n \cdot in(J_{\mathbf{a}-\mathbf{e}_n})~.$$
\end{proposition}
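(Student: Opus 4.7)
Write $\mathbf{a}^\flat:=(a_0,\ldots,a_{n-1})$ and $\mathbf{b}:=\mathbf{a}-\mathbf{e}_n$; note $b_n=a_n-1$ is odd, so Proposition \ref{in_J_a_odd} applies to $J_\mathbf{b}$. The plan is to exhibit a Gr\"obner basis of $J_\mathbf{a}(t)$ with respect to the grevlex order whose leading terms generate the asserted ideal, rather than compare Hilbert polynomials abstractly.

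The preliminary step is the structural identity $J_\mathbf{a}=J_{\mathbf{a}^\flat}+x_n J_\mathbf{b}$ at the level of ideals. For $0\le i<j<n$, the definition of $G_{i,j}^\mathbf{a}$ depends only on $(a_i,a_j)$, so $G_{i,j}^\mathbf{a}=G_{i,j}^{\mathbf{a}^\flat}=G_{i,j}^\mathbf{b}$. For $j=n$, using $\epsilon(a_n)=1$ together with the observation that $\mathbf{b}'=\mathbf{a}'$ (subtracting $1$ from the even $a_n$ leaves the associated odd sequence unchanged), Definition \ref{Gij} yields $G_{i,n}^\mathbf{a}=x_n\,G_{i,n}^\mathbf{b}$. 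Comparing generating sets gives the claimed ideal identity, and as a byproduct $J_{\mathbf{a}^\flat}\subset J_\mathbf{b}$.

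Next, fix a Gr\"obner basis $\{g_i\}$ of $J_{\mathbf{a}^\flat}$ in $T'=\R[x_0,\ldots,x_{n-1}]$ (which remains a Gr\"obner basis of $J_{\mathbf{a}^\flat}\cdot T$ under grevlex, since $x_n$ is the last variable), and a Gr\"obner basis $\{h_k\}$ of $J_\mathbf{b}$ furnished by the family $\{H(\mathbf{b},\mathbf{i})\}$ of Proposition \ref{initial supset}, valid for all but finitely many $t$. Crucially $\LT(g_i),\LT(h_k)\in T'$: the first since the $g_i$ themselves lie in $T'$, and the second since $in(J_\mathbf{b})=M_\mathbf{b}$ is generated by monomials in $T'$. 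The key claim is that $\mathcal{G}:=\{g_i\}\cup\{x_n h_k\}$ is a Gr\"obner basis of $J_\mathbf{a}$; it generates $J_\mathbf{a}$ by the structural identity. To invoke Buchberger's criterion, three types of $S$-pairs require checking. The $S(g_i,g_{i'})$ already admit standard representations via $\{g_i\}$. Because $\LT(h_k)\in T'$, one has $\mathrm{lcm}(\LT(x_n h_k),\LT(x_n h_{k'}))=x_n\cdot\mathrm{lcm}(\LT(h_k),\LT(h_{k'}))$, hence $S(x_n h_k,x_n h_{k'})=x_n S(h_k,h_{k'})$; a standard representation $S(h_k,h_{k'})=\sum q_l h_l$ (from $\{h_k\}$ being a Gr\"obner basis of $J_\mathbf{b}$) then pulls back to $\sum q_l(x_n h_l)$, a standard representation against $\{x_n h_l\}$, since multiplication by the monomial $x_n$ preserves grevlex comparisons of leading terms. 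The same reasoning, now using $\LT(g_i)\in T'$, gives $S(g_i,x_n h_k)=x_n S(g_i,h_k)$; since $g_i\in J_{\mathbf{a}^\flat}\subset J_\mathbf{b}$, the element $S(g_i,h_k)\in J_\mathbf{b}$ has a standard representation via $\{h_k\}$ which pulls back to one via $\{x_n h_k\}$ after multiplication by $x_n$.

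Buchberger's criterion then delivers $in(J_\mathbf{a})=\langle\LT(g_i)\rangle+\langle x_n\LT(h_k)\rangle=in(J_{\mathbf{a}^\flat})+x_n\cdot in(J_\mathbf{b})$, as claimed. The main obstacle I anticipate is the clean bookkeeping of the three $S$-pair reductions; all three rest on the common fact $\LT(g_i),\LT(h_k)\in T'$, which lets $x_n$ factor cleanly out of every relevant $\mathrm{lcm}$ and permits transporting standard representations by monomial multiplication by $x_n$.
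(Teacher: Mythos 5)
Your proposal is correct, and it reaches the conclusion by a genuinely different route than the paper for the harder inclusion. The paper also starts from the same containments ($J_{(a_0,\ldots,a_{n-1})}\subset J_\mathbf{a}$ and $x_n\cdot J_{\mathbf{a}-\mathbf{e}_n}\subset J_\mathbf{a}$, via $G_{i,j}^{\mathbf{a}}=G_{i,j}^{\mathbf{a}-\mathbf{e}_n}$ for $j<n$ and $G_{i,n}^{\mathbf{a}}=x_nG_{i,n}^{\mathbf{a}-\mathbf{e}_n}$), but for the reverse inclusion it argues directly on an arbitrary representation $f=\sum g_{i,j}G_{i,j}^{\mathbf{a}}$, splitting it into the $j<n$ part and $x_n$ times an element of $J_{\mathbf{a}-\mathbf{e}_n}$, and reading off where $\LT(f)$ must lie using grevlex; it does not invoke Propositions \ref{initial supset} or \ref{in_J_a_odd}. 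You instead exhibit the explicit set $\{g_i\}\cup\{x_nh_k\}$ and verify Buchberger's criterion, using $J_{(a_0,\ldots,a_{n-1})}\subset J_{\mathbf{a}-\mathbf{e}_n}$ for the mixed $S$-pairs and the fact that the relevant leading terms are $x_n$-free so that $x_n$ factors out of the lcm's and standard representations transport under multiplication by $x_n$; this is legitimate and non-circular, since $a_n-1$ is odd and Propositions \ref{initial supset} and \ref{in_J_a_odd} applied to $\mathbf{a}-\mathbf{e}_n$ precede this statement. What each approach buys: the paper's argument is shorter and works purely at the level of the generators $G_{i,j}$, but its key sentence (``the leading term of $f$ is determined only by the front'') silently ignores possible cancellation between, or dominance of, the two parts of the representation, so it needs a little care to make fully rigorous; your Buchberger argument is longer and leans on the earlier machinery for $J_{\mathbf{a}-\mathbf{e}_n}$, but it is airtight on exactly that point and, as a bonus, produces an explicit Gr\"obner basis of $J_\mathbf{a}(t)$. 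One small remark: only $x_n\nmid\LT(g_i)$ is actually needed for the lcm factorization in the mixed pairs (for the $h$--$h$ pairs the identity $\mathrm{lcm}(x_nu,x_nv)=x_n\,\mathrm{lcm}(u,v)$ holds unconditionally), so your emphasis on $\LT(h_k)\in\R[x_0,\ldots,x_{n-1}]$ is mildly redundant there, though of course true and harmless.
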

\begin{proof}
First, we show that $in(J_\mathbf{a})\supset in(J_{(a_0,a_1,\ldots,a_{n-1})})\cup x_n\cdot  in(J_{\mathbf{a}-\mathbf{e}_n})$. It is easy to see that $in(J_\mathbf{a})\supset in(J_{(a_0,a_1,\ldots,a_{n-1})})$ since $J_\mathbf{a}\supset J_{(a_0,a_1,\ldots,a_{n-1})}$. Also, for any $f\in J_{\mathbf{a}-\mathbf{e}_n}$, it can be written as $$f=\sum_{0\leq i<j\leq n} h_{i,j}G_{i,j}^{\mathbf{a}-\mathbf{e}_n}$$ for some $h_{i,j}$ in $T$. Since $G_{i,j}^\mathbf{a}=G_{i,j}^{\mathbf{a}-\mathbf{e}_n}$ for any $0\le i<j<n$ and $G_{i,n}^\mathbf{a}=x_n G_{i,n}^{\mathbf{a}-\mathbf{e}_n}$ (recall Definition \ref{Gij}), we see $x_n f\in J_{\mathbf{a}}$ so that $x_n\cdot J_{\mathbf{a}-\mathbf{e}_n}\subset J_\mathbf{a}$. Hence, we verify that $x_n\cdot  in(J_{\mathbf{a}-\mathbf{e}_n})\subset in(J_\mathbf{a})$. 

For the other inclusion $in(J_\mathbf{a})\subset \langle in(J_{(a_0,a_1,\ldots,a_{n-1})})\cup x_n\cdot  in(J_{\mathbf{a}-\mathbf{e}_n})\rangle$, we need to prove that for any element $f\in J_\mathbf{a}$, $LT(f)\in in(J_{(a_0,a_1,\ldots,a_{n-1})})$ or $LT(f)\in x_n\cdot  in(J_{\mathbf{a}-\mathbf{e}_n})$. In a similar manner as above, we note that one can write $f$ as 
\begin{equation}\label{f_expr}
f=\sum_{0\leq i<j\leq n} g_{i,j}G_{i,j}^{\mathbf{a}}=\sum_{0\leq i<j<n} g_{i,j}G_{i,j}^\mathbf{a}+\sum_{0\leq i\leq n-1} g_{i,n} G_{i,n}^\mathbf{a}=\sum_{0\leq i<j<n} g_{i,j}G_{i,j}^\mathbf{a}+x_n\cdot\sum_{0\leq i\leq n-1} g_{i,n} G_{i,n}^{\mathbf{a}-\mathbf{e}_n}
\end{equation}
for some $g_{i,j}$ in $T$. Since we use the graded reverse lexicographic order, if the front part of (\ref{f_expr}) (i.e. the sum over $0\leq i<j<n$) is nonzero, the leading term of $f$ is determined only by the front, which corresponds to $LT(f)\in in(J_{(a_0,a_1,\ldots,a_{n-1})})$. On the other hand, if the front is zero, the leading term of $f$ should be come from the rear of (\ref{f_expr}). In the latter case, $LT(f)\in x_n\cdot  in(J_{\mathbf{a}-\mathbf{e}_n})$. 
\end{proof}
\begin{proposition}\label{degree of ideal}
For a sequence of positive integers $\mathbf{a}=(a_0,\ldots,a_n)$ with $a_0\geq a_1\geq \cdots \geq a_n$, the degree of $J_\mathbf{a}$ is given by
$$\deg(J_\mathbf{a})=\frac{1}{2}(\prod_{i=0}^n (a_i+1)-\prod_{i=0}^n (a_i-1))~.$$
\end{proposition}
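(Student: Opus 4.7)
The plan is to use $\deg(J_\mathbf{a}) = \deg(in(J_\mathbf{a}(t)))$ (degrees are preserved under passage to the initial ideal, valid for all but finitely many $t$) and to induct on the length of $\mathbf{a}$, splitting according to the parity of $a_n$. When $a_n$ is odd, the conclusion is immediate: Proposition \ref{in_J_a_odd} yields $in(J_\mathbf{a}) = M_\mathbf{a}$, and Lemma \ref{degree of monomial} supplies the desired value of $\deg(M_\mathbf{a})$, so no further induction is needed at this step.

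When $a_n$ is even, Proposition \ref{in_J_a_even} gives $in(J_\mathbf{a}) = A + x_n B$ with $A := in(J_{(a_0,\ldots,a_{n-1})})$ and $B := in(J_{\mathbf{a}-\mathbf{e}_n})$. Three structural facts are crucial. First, since the generator $G_{i,j}^\mathbf{a}$ depends only on the pair $(a_i,a_j)$, we have $J_{(a_0,\ldots,a_{n-1})} \subseteq J_{\mathbf{a}-\mathbf{e}_n}$ and hence $A \subseteq B$. Second, the generators of both $A$ and $B$ involve only $x_0,\ldots,x_{n-1}$: for $A$ this is clear, and for $B$ it follows from $B = M_{\mathbf{a}-\mathbf{e}_n}$ (the odd case applied to $\mathbf{a}-\mathbf{e}_n$, whose last entry $a_n-1$ is odd) together with the explicit form in Definition \ref{def_Ma}. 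Third, $M_{\mathbf{a}-\mathbf{e}_n}$ contains the pure powers $x_k^{a_k+a_n-1}$ for each $k<n$ (corresponding to $\mathbf{i}=\lambda_{\mathbf{a}-\mathbf{e}_n}\mathbf{e}_k$), so the restriction $B_0$ of $B$ to $T_0 := \mathbb{R}[x_0,\ldots,x_{n-1}]$ is Artinian.

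With these facts, the standard monomials of $T/(A+x_n B)$ cleanly split by $x_n$-exponent: a monomial $x_n^k m$ with $m \in T_0$ is standard iff $m \notin A_0$ when $k=0$ and iff $m \notin B_0$ when $k \geq 1$ (using $A \subseteq B$ and the $x_n$-freeness of the generators). Summing Hilbert functions gives, for all $d \gg 0$,
\[
\HF(T/in(J_\mathbf{a}), d) \;=\; \HF(T_0/A_0, d) + \sum_{j=0}^{d-1}\HF(T_0/B_0, j) \;=\; \deg(J_{(a_0,\ldots,a_{n-1})}) + \deg(J_{\mathbf{a}-\mathbf{e}_n}),
\]
where the first term stabilizes to $\deg(J_{(a_0,\ldots,a_{n-1})})$ (as $T_0/A_0$ has Krull dimension $1$) and the second to $\dim_\mathbb{R}(T_0/B_0) = \deg(J_{\mathbf{a}-\mathbf{e}_n})$ (as $T_0/B_0$ is finite-dimensional, and $T/B \cong (T_0/B_0)[x_n]$ matches Hilbert polynomials). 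Applying the induction hypothesis to the shorter sequence $(a_0,\ldots,a_{n-1})$ and the odd case to $\mathbf{a}-\mathbf{e}_n$, a short rearrangement of the factors of $a_n \pm 1$ yields the claimed formula.

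The main obstacle is the cluster of structural observations in the second paragraph, especially the inclusion $A \subseteq B$ and the Artinianness of $B_0$; without them the standard monomials would not decouple cleanly along powers of $x_n$, and the Hilbert-function bookkeeping would become considerably more delicate.
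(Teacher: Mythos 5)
Your argument is correct, and its skeleton is the same as the paper's: pass to $in(J_\mathbf{a}(t))$, induct, dispose of the case $a_n$ odd via Proposition \ref{in_J_a_odd} together with Lemma \ref{degree of monomial}, and in the case $a_n$ even combine Proposition \ref{in_J_a_even} with the induction hypothesis for $(a_0,\ldots,a_{n-1})$ and the (odd) value for $\mathbf{a}-\mathbf{e}_n$. The only genuine divergence is how the additivity of degrees in the even case is established. The paper uses the inclusion $in(J_{(a_0,\ldots,a_{n-1})})\subseteq M_{\mathbf{a}-\mathbf{e}_n}$ to rewrite $in(J_\mathbf{a})=\bigl(in(J_{(a_0,\ldots,a_{n-1})})+(x_n)\bigr)\cap M_{\mathbf{a}-\mathbf{e}_n}$ and then reads off Hilbert functions from the exact sequence $0\to T/(A\cap B)\to T/A\oplus T/B\to T/(A+B)\to 0$, using the pure powers $x_k^{a_k+a_n-1}\in M_{\mathbf{a}-\mathbf{e}_n}$ to see $V(A+B)=\emptyset$. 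You instead count standard monomials of the monomial ideal $A+x_nB$ stratified by the exponent of $x_n$, which requires exactly the same structural inputs (the inclusion $A\subseteq B$, the $x_n$-freeness of the generators as in Remark \ref{vars_inJa}, and the same pure powers, now phrased as Artinianness of $B_0$). The two computations are equivalent in content; yours is slightly more elementary and explicit (pure monomial bookkeeping, no exact sequence), while the paper's version packages the same decoupling into a standard Mayer--Vietoris-type argument. One small presentational point: make the base of the induction explicit --- the paper treats $n=1$ directly via $in(J_{(a_0,a_1)})=(x_0^{a_0+a_1})$ or $(x_0^{a_0+a_1-1}x_1)$, whereas in your formulation the recursion bottoms out at a length-one sequence, where $J$ has no generators and the degree is $1$, which is still consistent with the formula but should be said.
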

\begin{proof}
We will use induction on $n+1$, the number of variables. For the initial case, $n=1$, $in(J_{(a_0,a_1)})=(x_0^{a_0+a_1})$ or $in(J_\mathbf{a})=(x_0^{a_0+a_1-1}x_1)$ depending on the parity of $a_1$. For both cases, $\deg(J_{(a_0,a_1)})=a_0+a_1$ and it satisfies the assertion. Now, suppose that for fewer number of variables the claim does hold.
If $a_n$ is odd, by Proposition \ref{in_J_a_odd}, 
\begin{align*}
\deg(J_\mathbf{a})&=\deg(in(J_\mathbf{a}))=\deg(M_\mathbf{a})=\frac{1}{2}(\prod_{i=0}^n (a_i+1)-\prod_{i=0}^n (a_i-1))~.
\end{align*}
In case of $a_n$ even, by Proposition \ref{in_J_a_even}, we have
\begin{align*}
in(J_\mathbf{a})&=\langle in(J_{(a_0,a_1,\ldots,a_{n-1})}) \cup (x_n)\cap in(J_{(a_0,a_1,\ldots,a_{n}-1)})\rangle\\
&=in(J_{(a_0,a_1,\ldots,a_{n-1})}) + (x_n)\cap in(J_{(a_0,a_1,\ldots,a_{n}-1)})=\left(in(J_{(a_0,a_1,\ldots,a_{n-1})})+(x_n)\right) \cap in(J_{(a_0,a_1,\ldots,a_{n}-1)})
\end{align*}
since $in(J_{(a_0,a_1,\ldots,a_{n-1})})\subset in(J_{(a_0,a_1,\ldots,a_{n}-1)})=M_{(a_0,a_1,\ldots,a_{n}-1)}$. 

Put $A=in(J_{(a_0,a_1,\ldots,a_{n-1})})+(x_n)$ and $B=in(J_{(a_0,a_1,\ldots,a_{n}-1)})=M_{(a_0,a_1,\ldots,a_{n}-1)}$. Then, $in(J_\mathbf{a})=A\cap B$. For $A+B\supset (x_n)+M_{(a_0,a_1,\ldots,a_{n}-1)}$ and $x_i^{a_i+a_n-1}\in M_{(a_0,a_1,\ldots,a_{n}-1)}$ for all $i=0,\ldots,n-1$, we can observe that $V(A+B)=\emptyset$. So, from the exact sequence 
$$0\rightarrow T/(A\cap B)\rightarrow T/A\oplus T/B\rightarrow T/(A+B)\rightarrow 0$$
we derive that for any $t\gg0$
\begin{align*}
\HF(T/(A\cap B),t)&=\HF(T/A,t)+\HF(T/B,t)\\
&=\HF(T/(in(J_{(a_0,a_1,\ldots,a_{n-1})})+(x_n)),t)+\HF(T/M_{(a_0,a_1,\ldots,a_{n}-1)},t)\\
&=\HF(\mathbb{R}[x_0,x_1,\ldots,x_{n-1}]/in(J_{(a_0,a_1,\ldots,a_{n-1})}),t)+\HF(T/M_{(a_0,a_1,\ldots,a_{n}-1)},t)~\cdots(\ast).
\end{align*}
By the induction hypothesis, we have
$$\HF(\mathbb{R}[x_0,x_1,\ldots,x_{n-1}]/in(J_{(a_0,a_1,\ldots,a_{n-1})}),t)=\deg(J_{(a_0,a_1,\ldots,a_{n-1})})=\frac{1}{2}(\prod_{i=0}^{n-1}(a_i+1)-\prod_{i=0}^{n-1}(a_i-1))$$
and since $a_n-1$ is odd, we also obtain
$$\HF(T/M_{(a_0,a_1,\ldots,a_{n}-1)},t)=\deg(J_{(a_0,a_1,\ldots,a_{n}-1)})=\frac{1}{2}((a_n)\prod_{i=0}^{n-1}(a_i+1)-(a_n-2)\prod_{i=0}^{n-1}(a_i-1))~.$$
Using $(\ast)$ and two equalities above, we conclude
\begin{align*}
\deg(J_\mathbf{a})&=\deg(in(J_\mathbf{a}))=\deg(A\cap B)=\HF(T/(A\cap B),t)\quad\textrm{ for any $t\gg0$}\\
&=\frac{1}{2}(\prod_{i=0}^{n-1}(a_i+1)-\prod_{i=0}^{n-1}(a_i-1))+\frac{1}{2}((a_n)\prod_{i=0}^{n-1}(a_i+1)-(a_n-2)\prod_{i=0}^{n-1}(a_i-1))\\
&=\frac{1}{2}((a_n+1)\prod_{i=0}^{n-1}(a_i+1)+(a_n-1)\prod_{i=0}^{n-1}(a_i-1))=\frac{1}{2}(\prod_{i=0}^n(a_i+1)-\prod_{i=0}^n(a_i-1))~.
\end{align*}
\end{proof}

\begin{remark}\label{vars_inJa}
We make some remarks on the initial ideal $in(J_\mathbf{a})$.
\begin{enumerate}
\item By Proposition \ref{in_J_a_even} and a typical induction argument, we obtain a fact that $in(J_{(a_0,a_1,\ldots,a_k)})$ can be generated by monomials concerning only the variables $x_0,\ldots,x_k$ no matter how $a_k$ is odd or even. Note that the fact is true both for $k=1$ because $in(J_{(a_0,a_1)})=(x_0^{a_0+a_1'}x_1^{\epsilon(a_1)})$ and for any odd $a_k$ by Definition \ref{def_Ma}.
\item If there is an odd index $a_i$ for some $i\ge1$ in a given $\mathbf{a}=(a_0,a_1,\ldots,a_n)$, let $a_k$ be the first odd index  from the last. Then, as applying Proposition \ref{in_J_a_even} recursively, we have
\begin{align*}
in(J_\mathbf{a})&=\langle in(J_{(a_0,a_1,\ldots,a_{n-1})})\cup x_n \cdot in(J_{(a_0,a_1,\ldots,a_{n}-1)})\rangle\\
&=\langle in(J_{(a_0,\ldots,a_{n-2})})\cup x_{n-1} \cdot in(J_{(a_0,\ldots,a_{n-1}-1)})\cup x_n \cdot in(J_{(a_0,\ldots,a_{n}-1)})\rangle \\
&\cdots\\
&=\langle in(J_{(a_0,\ldots,a_{k})})\cup x_{k+1} \cdot in(J_{(a_0,\ldots,a_{k+1}-1)})\cup \cdots \cup x_n \cdot in(J_{(a_0,\ldots,a_{n}-1)})\rangle \\
&=\left\langle M_{(a_0,\ldots,a_{k})} \cup \bigcup_{i=k+1}^{n} x_i\cdot M_{(a_0,\ldots,a_{i}-1)}\right\rangle~.
\end{align*}
Similarly, in case of all $a_i$ being even for $i\ge1$, we also obtain
\begin{align*}
in(J_\mathbf{a})&=\langle in(J_{(a_0,a_{1})}) \cup \bigcup_{i=2}^{n} x_i\cdot M_{(a_0,\ldots,a_{i}-1)}\rangle =\left\langle(x_0^{a_0+a_1-1}x_1) \cup \bigcup_{i=2}^{n} x_i\cdot M_{(a_0,\ldots,a_{i}-1)}\right\rangle~.
\end{align*}
\end{enumerate}
\end{remark}

\subsection{Proof for the upper bound}\label{subsect_3-3}

Using structures of the initial ideal $in(J_\mathbf{a})$, we first show that the ideal $J_\mathbf{a}$ is saturated. 

\begin{proposition}\label{satu}
Let $\mathbf{a}$ be a sequence of positive integers $(a_0,a_1,\ldots,a_n), ~a_0\geq a_1\geq \cdots\geq a_n$. For all but finitely many real number $t$, the ideal $J_{\mathbf{a}}(t)$ is saturated in $T$.\end{proposition}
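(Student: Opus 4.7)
The plan is to establish saturation in two steps: first show that the monomial initial ideal $in(J_\mathbf{a}(t))$ is saturated in $T$ for all but finitely many $t\in\mathbb{R}$, and then transfer saturatedness to $J_\mathbf{a}(t)$ itself. For the transfer I would invoke the standard semicontinuity of depth under Gr\"obner degeneration, $\mathrm{depth}(T/J_\mathbf{a}) \geq \mathrm{depth}(T/in(J_\mathbf{a}))$. Since $\dim(T/J_\mathbf{a}) = 1$ by Proposition~\ref{degree of ideal}, saturation is equivalent to depth at least one (i.e.\ to $\mathbf{m}$ not being associated), so it suffices to prove $\mathrm{depth}(T/in(J_\mathbf{a})) \geq 1$. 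Equivalently, I need to exhibit a Stanley decomposition $T/in(J_\mathbf{a}) = \bigoplus_\alpha \mathbf{x}^\alpha\,\mathbb{R}[\sigma_\alpha]$ in which every $\sigma_\alpha$ is a nonempty subset of $\{x_0,\ldots,x_n\}$.

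The construction proceeds by induction on $n$. The base $n=1$ is direct: $in(J_{(a_0,a_1)})$ is either $(x_0^{a_0+a_1})$ (if $a_1$ is odd) or $(x_0^{a_0+a_1-1}x_1)$ (if $a_1$ is even), and in both cases a Stanley decomposition with every $\sigma$ nonempty is immediate. For the inductive step when $a_n$ is odd, Proposition~\ref{in_J_a_odd} gives $in(J_\mathbf{a}) = M_\mathbf{a}$, and the Stanley decomposition $T/M_\mathbf{a} = \bigoplus_{\mathbf{x}^\alpha \in N_\mathbf{a}} \mathbf{x}^\alpha\,\mathbb{R}[x_n]$ already used in the proof of Lemma~\ref{degree of monomial} has $\sigma_\alpha = \{x_n\}$ for every piece.

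When $a_n$ is even, Proposition~\ref{in_J_a_even} gives $in(J_\mathbf{a}) = I_1 + x_n I_2$, where $I_1 = in(J_{(a_0,\ldots,a_{n-1})})$ and $I_2 = M_{\mathbf{a}-\mathbf{e}_n}$, both monomial ideals in $R := \mathbb{R}[x_0,\ldots,x_{n-1}]$ (extended to $T$). The monomials representing $T/in(J_\mathbf{a})$ split by $x_n$-degree: the degree-$0$ layer consists of monomials of $R$ not in $I_1$ and is $\mathbb{R}$-isomorphic to $R/I_1$, while the degree-$\geq 1$ layer consists of $\mathbf{y}'x_n^k$ with $k \geq 1$ and $\mathbf{y}' \notin I_1 + I_2$. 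By the inductive hypothesis, $I_1$ is saturated in $R$, so one can pick a Stanley decomposition of $R/I_1$ in which every $\sigma$ is nonempty; for the degree-$\geq 1$ layer, take any Stanley decomposition of $R/(I_1+I_2)$ and adjoin $x_n$ to each polynomial ring. Concatenating these two families yields the desired Stanley decomposition of $T/in(J_\mathbf{a})$.

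The main obstacle I anticipate is the careful bookkeeping needed to verify that this concatenation really is a Stanley decomposition, i.e.\ that the two families of pieces are pairwise disjoint as $\mathbb{R}$-subspaces of $T/in(J_\mathbf{a})$ and together span it. This reduces to the monomial observation that $\mathbf{y}'x_n^k \in in(J_\mathbf{a})$ if and only if $\mathbf{y}' \in I_1$ or ($k\geq 1$ and $\mathbf{y}' \in I_2$), which is immediate from the monomiality of $I_1$ and $I_2$. The ``all but finitely many $t$'' qualifier is inherited directly from Propositions~\ref{in_J_a_odd} and \ref{in_J_a_even}.
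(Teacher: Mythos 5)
Your proposal is essentially correct, but it packages the argument differently from the paper. The paper works directly with division with remainder: assuming $g\notin J_\mathbf{a}$ while $gx_i^{m_i}\in J_\mathbf{a}$ for all $i$, it replaces $g$ by its remainder $r$ modulo a Gr\"obner basis and uses the recursive description of $in(J_\mathbf{a})$ (Propositions \ref{in_J_a_odd}, \ref{in_J_a_even} and Remark \ref{vars_inJa}) in a parity case analysis to show that $\LT(r)x_i^{m_i}\in in(J_\mathbf{a})$ for every $i$ forces $\LT(r)\in in(J_\mathbf{a})$, a contradiction; in effect the paper proves that the monomial ideal $in(J_\mathbf{a})$ is itself saturated and transfers this to $J_\mathbf{a}$ by the elementary remainder trick. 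You establish the same key property of $in(J_\mathbf{a})$ in a different guise --- a Stanley decomposition of $T/in(J_\mathbf{a})$ with every $\sigma_\alpha$ nonempty, built by induction from the same structural facts (generators of $M_\mathbf{a}$ avoid $x_n$ in the odd case; $in(J_\mathbf{a})=I_1+x_nI_2$ in the even case, and your membership observation $\mathbf{y}x_n^k\in I_1+x_nI_2$ iff $\mathbf{y}\in I_1$ or $k\ge1$, $\mathbf{y}\in I_2$ is exactly right) --- and then transfer to $J_\mathbf{a}$ via $\mathrm{depth}(T/J_\mathbf{a})\ge\mathrm{depth}(T/in(J_\mathbf{a}))$. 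Your route buys a clean conceptual statement at the cost of citing depth semicontinuity under Gr\"obner degeneration, which the paper avoids; note you could dispense with it, since a nonempty-face Stanley decomposition shows directly that $in(J_\mathbf{a})$ is a saturated monomial ideal, and then the paper's one-line remainder argument yields saturation of $J_\mathbf{a}$ without any appeal to depth (the remark about $\dim(T/J_\mathbf{a})=1$ is likewise unnecessary: depth at least one already gives $H^0_{\mathbf{m}}(T/J_\mathbf{a})=0$).

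One small repair is needed in the even step, where you write that ``by the inductive hypothesis, $I_1$ is saturated in $R$, so one can pick a Stanley decomposition of $R/I_1$ in which every $\sigma$ is nonempty.'' The implication ``saturated $\Rightarrow$ admits a nonempty-face Stanley decomposition'' is the nontrivial direction (the easy direction is the converse you use elsewhere), so you should not route through saturation: formulate the induction hypothesis as the existence of a nonempty-face Stanley decomposition of $R/in(J_{(a_0,\ldots,a_{n-1})}(t))$ itself (for all but finitely many $t$), which is precisely what your construction produces at every stage --- the odd case via $\bigoplus_{\mathbf{x}^\alpha\in N_\mathbf{a}}\mathbf{x}^\alpha\,\mathbb{R}[x_n]$ as in Lemma \ref{degree of monomial}, the even case via your two-layer splitting. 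With that rewording, and with the exceptional sets of $t$ accumulated over the finitely many recursive applications of Propositions \ref{in_J_a_odd} and \ref{in_J_a_even}, the argument is complete.
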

\begin{proof}
Denote $J_{\mathbf{a}}(t)$ just by $J_{\mathbf{a}}$. Recall that $J_\mathbf{a}$ is saturated in $T$ if and only if for any polynomial $g$ with $gx_0^{m_0}, g x_1^{m_1},\ldots, g x_n^{m_n}\in J_\mathbf{a}$ for some $m_i$ in $\mathbb{Z}_{>0}$, $g$ belongs to $J_\mathbf{a}$. We prove the statement by contradiction.

Suppose that $g$ is not contained in $J_\mathbf{a}$. Let $r$ be the remainder in the division of $g$ by the ideal $J_\mathbf{a}$ with respect to the graded reverse lexicographic order. Then it holds that $\LT(r)\notin in(J_\mathbf{a})$ and $\LT(r)x_i^{m_i}\in in(J_\mathbf{a})$ for $i=0,1,\ldots,n$. Let us begin with the condition $\LT(r)x_n^{m_n}\in in(J_\mathbf{a})$.

First, if $a_n$ is odd, by Proposition \ref{in_J_a_odd} $in(J_\mathbf{a})=M_\mathbf{a}$ and as in Definition \ref{def_Ma} there is a generating set for $M_\mathbf{a}$ which does not have any generator with $x_n$ in it. It means that $\LT(r)x_n^{m_n}\in M_\mathbf{a}$ implies $\LT(r)\in M_\mathbf{a}=in(J_\mathbf{a})$, which is a contradiction. On the other hand, if $a_n$ is even, by Proposition \ref{in_J_a_even} we have 
$in(J_\mathbf{a})=\langle in(J_{(a_0,a_1,\ldots,a_{n-1})})\cup x_n\cdot in(J_{(a_0,a_1,\ldots,a_n-1)})\rangle$.  
Thus, $\LT(r)x_n^{m_n}\in in(J_{(a_0,a_1,\ldots,a_{n-1})})$ or $\LT(r)x_n^{m_n}\in x_n\cdot in(J_{(a_0,a_1,\ldots,a_n-1)})$. For the former case, because of generators of $in(J_{(a_0,a_1,\ldots,a_{n-1})})$ with no $x_n$ (see Remark \ref{vars_inJa} (1)), similarly we get 
$$\LT(r)\in in(J_{(a_0,a_1,\ldots,a_{n-1})})\subset in(J_\mathbf{a})$$ which is a contradiction. For the latter case, since $a_n-1$ is odd, we have \\$in(J_{(a_0,a_1,\ldots,a_n-1)})=M_{(a_0,a_1,\ldots,a_n-1)}$ and know that the generators of $M_{(a_0,a_1,\ldots,a_n-1)}$ do not involve $x_n$. Thus, $\LT(r)x_n^{m_n}\in x_n\cdot in(J_{(a_0,a_1,\ldots,a_n-1)})$ means that $\LT(r)\in M_{(a_0,a_1,\ldots,a_n-1)}$. Let $\LT(r):=x_0^{\gamma_0}x_1^{\gamma_1}\cdots x_n^{\gamma_n}$. Then, we claim that $\gamma_n=0$. If $\gamma_n>0$, then $\LT(r)\in x_n M_{(a_0,a_1,\ldots,a_n-1)}$ and $x_n M_{(a_0,a_1,\ldots,a_n-1)}\subset in(J_\mathbf{a})$ by Remark \ref{vars_inJa} (2). So, this is also a contradiction.

Now, consider another condition $\LT(r)x_{n-1}^{m_{n-1}}\in in(J_\mathbf{a})=\langle in(J_{(a_0,a_1,\ldots,a_{n-1})})\cup x_n M_{(a_0,a_1,\ldots,a_n-1)}\rangle$. Since $\LT(r)x_{n-1}^{m_{n-1}}$ does not concern any $x_n$ in it, we have \\$\LT(r)x_{n-1}^{m_{n-1}}\in in(J_{(a_0,a_1,\ldots,a_{n-1})})$. Note that this is exactly the same situation as the initial condition above just with less number of variables. If $a_{n-1}$ is odd, then $\LT(r)\in in(J_{(a_0,a_1,\ldots,a_{n-1})})\subset in(J_\mathbf{a})$, a contradiction. And if $a_{n-1}$ is even, then $\LT(r)x_{n-1}^{m_{n-1}}\in in(J_{(a_0,a_1,\ldots,a_{n-2})})$ or $\LT(r)x_{n-1}^{m_{n-1}}\in x_{n-1}\cdot in(J_{(a_0,a_1,\ldots,a_{n-1}-1)})$. Then, similarly we can see that the former leads to a contradiction and the latter implies $\gamma_{n-1}=0$. So, using a condition $\LT(r)x_{n-2}^{m_{n-2}}\in in(J_\mathbf{a})=\langle in(J_{(a_0,a_1,\ldots,a_{n-2})})\cup x_{n-1} M_{(a_0,a_1,\ldots,a_{n-1}-1)}\cup x_n M_{(a_0,a_1,\ldots,a_n-1)}\rangle$, we reach $\LT(r)x_{n-2}^{m_{n-2}}\in in(J_{(a_0,a_1,\ldots,a_{n-2})})$, and so on. Thus, one can repeat the same arguments until the moment that we first meet an odd $a_i$ at some index $i>0$, where the proof can be done by contradiction. 

Hence, the only remaining case is that $a_i$ are all even for $i>0$. Then by repeating the process above, we get $\LT(r)x_1^{m_1}\in (x_0^{a_0+a_1-1}x_1)= x_1 M_{(a_0,a_1-1)}$, which implies $\LT(r)\in M_{(a_0,a_1-1)}$ since $a_1-1$ is odd. Similarly we have $\gamma_1=0$ (otherwise $\LT(r)\in x_1 M_{(a_0,a_1-1)}\subset in(J_\mathbf{a})$ which is a contradiction). But, this situation contradicts to the last condition $\LT(r)x_{0}^{m_{0}}\in in(J_\mathbf{a})$ because $\LT(r)x_{0}^{m_{0}}$ is just a power of $x_0$ and $in(J_\mathbf{a})=\langle(x_0^{a_0+a_1-1}x_1) \cup \bigcup_{i=2}^{n} x_i\cdot M_{(a_0,\ldots,a_{i}-1)}\rangle$ in this case by Remark \ref{vars_inJa} (2).
\end{proof}

Now, we are ready to prove Theorem \ref{main_thm}.\\

\noindent\textbf{Proof of Theorem \ref{main_thm}}
Let us denote the given monomial $X_0^{a_0}X_1^{a_1}\ldots X_n^{a_n}$ by $\mathbf{X}^{\mathbf{a}}$ for a sequence of positive integers $\mathbf{a}=(a_0,\ldots,a_n)$. Since Waring rank of a monomial does not change by relabeling the variables, we may assume that $a_0\ge\ldots\ge a_n$. 

Then, by Proposition \ref{satu}, $J_\mathbf{a}(t)$ is saturated for all but finitely many real number $t$. Take such a nonzero real number $t_0$. By Proposition \ref{Ja} we know that $J_\mathbf{a}(t_0)$ is contained in $(\mathbf{X}^\mathbf{a})^\perp$ and defines $\frac{1}{2}(\prod_{i=0}^n(a_i+1)-\prod_{i=0}^n(a_i-1))$-many distinct real points as a set (let us call this set $\mathbb{X}$). 

Since $J_\mathbf{a}(t_0)$ is a $0$-dimensional ideal and $\deg(J_\mathbf{a}(t_0))=\deg(in(J_\mathbf{a}(t_0)))$ is same as $|\mathbb{X}|$ by Lemma \ref{degree of monomial} and Proposition \ref{degree of ideal}, we have $\HF(T/J_\mathbf{a}(t_0),d)=|\mathbb{X}|$ for all $d\gg0$, which by Proposition \ref{Prop_FromSatToRad} implies that $J_\mathbf{a}(t_0)$ is equal to the defining ideal $I_{\mathbb{X}}$ (in other words, $\mathbb{X}$ is an real apolar set for the monomial $\mathbf{X}^{\mathbf{a}}$).

Thus, by the Apolarity Lemma (Lemma \ref{real apolarity}) we can conclude that  
$$\rank_\R(\mathbf{X}^{\mathbf{a}})\leq |\mathbb{X}|=\frac{1}{2}\left\{\prod_{i=0}^n(a_i+1)-\prod_{i=0}^n(a_i-1)\right\}~~.$$
\qed

\subsection{Remarks and Computations}\label{subsect_3-4}
\medskip

\begin{remark}\label{better bound}
There are some remarks on Theorem \ref{main_thm}.
\begin{enumerate}
\item First of all, we would like to mention that the upper bound in Theorem \ref{main_thm} is sharp in some cases. For the binary case, $\mathbf{a}=(a_0,a_1)$, the real rank of $X_0^{a_0}X_1^{a_1}$ is $a_0+a_1$ in \cite{BCG}. Our bound also shows that 
$$\rank_{\mathbb{R}}(X_0^{a_0}X_1^{a_1})\le\frac{1}{2}((a_0+1)(a_1+1)-(a_0-1)(a_1-1))=a_0+a_1~,$$
which is tight. Further, by \cite[theorem 3.5]{CKOV} it is known that $\rank_\R(\mathbf{X}^{\mathbf{a}})=\rank_\mathbb{C}(\mathbf{X}^{\mathbf{a}})=\frac{1}{2}\prod_{i=0}^n(a_i+1)$ whenever $\min(a_i)=1$. In this case, the bound in Theorem \ref{main_thm} also coincides with this real rank as $\prod_{i=0}^n (a_i-1)=0$, 
\begin{align*}
\rank_{\mathbb{R}}(X_0^{a_0}\cdots X_{n-1}^{a_{n-1}}X_n)&\le\frac{1}{2}\bigg(\prod_{i=0}^n (a_i+1)-\prod_{i=0}^n (a_i-1)\bigg)=\frac{1}{2}\prod_{i=0}^n(a_i+1)~.
\end{align*}
Finally, we would like to point out that the upper bound in Theorem \ref{main_thm} for a special monomial $(x_0\cdots x_n)^2$ is $\frac{1}{2}(3^{n+1}-1)$, which is same as the bound for this monomial given in \cite[proposition 3.6]{CKOV}.

\item In general, for a sequence $\mathbf{a}=(a_0,a_1,\ldots,a_n)$ with minimum $a_n$, the upper bound in Theorem \ref{main_thm} (call this $\textsc{UB}_{\textsf{HM}}$) is better than the bound $\prod_{i=0}^{n-1}(a_i+a_n)$ in \cite[theorem 3.1]{CKOV} (denote by $\textsc{UB}_{\textsf{CKOV}}$). For, using the elementary symmetric polynomials $e_k(a_0,a_1,\ldots,a_{n-1}):=\sum_{0\leq j_0\leq\cdots\leq j_k\leq n-1}a_{j_0}\cdots a_{j_k}$, we see that 
\begin{align*}
\textsc{UB}_{\textsf{CKOV}} &=(a_0+a_n)(a_1+a_n)\cdots(a_{n-1}+a_n)\\
&=e_n(a_0,a_1,\ldots,a_{n-1})+e_{n-1}(a_0,a_1,\ldots,a_{n-1})a_n+e_{n-2}(a_0,a_1,\ldots,a_{n-1})a_n^2+\cdots\\
&=\sum_{i=0}^n e_{n-i}(a_0,a_1,\ldots,a_{n-1})a_n^i~,
\end{align*}
while $\textsc{UB}_{\textsf{HM}}$ can be represented as
$\ds\sum_{i=0}^n e_{n-i}(a_0,a_1,\ldots,a_{n-1})a_n^{i\%2}$ ($i\%2$ is the remainder in the division of $i$ by $2$).

\item Another bound for the real rank can be obtained from the result \cite[corollary 9]{BT15} which says that the maximum rank is bounded above by the twice of the minimum typical rank. For the given sequence $\mathbf{a}$, let $d=\sum_{i=0}^n a_i$. Then the bound is given by $2\cdot \lceil\frac{1}{n+1}\binom{n+d}{n}\rceil$ (say $\textsc{UB}_{\textsf{BT}}$). Let us compare $\textsc{UB}_{\textsf{HM}}$ and $\textsc{UB}_{\textsf{BT}}$ for the case that all the exponents $a_i$ are equal in a fixed total degree $d$. Since $a_0=a_1=\cdots=a_n=k$ and $d=(n+1)k$, $\textsc{UB}_{\textsf{HM}}$ can estimated by 
$$\frac{1}{2}((k+1)^{n+1}-(k-1)^{n+1})=(n+1)\cdot k^n+\binom{n+1}{n-2}k^{n-2}+\cdots\approx (n+1)\cdot k^n~.$$
On the other hand, $\textsc{UB}_{\textsf{BT}}$ is given by
$$2\cdot \lceil\frac{1}{n+1}\binom{(n+1)k+n}{n}\rceil=2\cdot \lceil \frac{1}{n+1}\frac{((n+1)k+n)((n+1)k+n-1)\cdots((n+1)k+1)}{n!}\rceil~,$$
which can be approximated to $2\frac{(n+1)^{n}}{(n+1)!}k^n$.
Hence, $\textsc{UB}_{\textsf{HM}}$ is asymptotically much better than $\textsc{UB}_{\textsf{BT}}$ for this case.
\end{enumerate} 

\begin{table}[htb]
\small{\begin{tabular}{ l r r r r r r r r}
\toprule
			& $\mathbf{a}=(3,3,3)$& (4,3,3) 	& (4,4,3) 	& (4,4,4) 	& (5,5,5,5) 	& (7,7,7,7,7) & (10,9,8,7,6,5,4) & (7,7,7,7,7,7,7)\\
			\midrule
$\textsc{UB}_{\textsf{BT}}$ 		&38		&44		&52		&62 	    &886			&32902 & 8282766 & 8282766\\
\hline
$\textsc{UB}_{\textsf{CKOV}}$ 	& 36		& 42		& 49		& 64		&1000		& 38416 & 2162160 & 7529536\\
\hline
$\textsc{UB}_{\textsf{HM}}$ 	& 28		& 34		& 41		& 49		& 520		& 12496 & 740880 & 908608\\
\bottomrule
\end{tabular}}\\[0.25\baselineskip]
\caption{Comparison of upper bounds for real rank of monomials}\label{compareTable}
\end{table}
\end{remark}

\begin{remark}[Waring rank and decomposition over $\mathbb{Q}$]\label{Qrank} We would like to mention that the bound in Theorem \ref{main_thm} and the procedure for finding a suitable apolar set also hold for the Waring problem over the \textit{rational} numbers. In fact, all the results on the ideal $J_\mathbf{a}(t)$ in the paper, which are devised for producing an apolar set for the upper bound, do hold at least for all but finitely many real number $t$. Thus, for some rational choice of $t$, every necessary statement still remains true (for instance, we can find an apolar set of rational points by taking an appropriate $t\in\mathbb{Q}$ in Proposition \ref{Ja} as in Example \ref{Ex333} below). Therefore, we also have for any monomial $\mathbf{X}^{\mathbf{a}}=X_0^{a_0}X_1^{a_1}\ldots X_n^{a_n}$ with each $a_i>0$,
\begin{equation}\label{Our_bd_rank_over_QQ}
\rank_\mathbb{Q}(\mathbf{X}^{\mathbf{a}})\leq \frac{1}{2}(\prod_{i=0}^n(a_i+1)-\prod_{i=0}^n(a_i-1))~.
\end{equation}
\end{remark}

\begin{example}\label{Ex333}
For the degree $9$ ternary monomial $F=X^3Y^3Z^3$ (i.e. the case of $\mathbf{a}=(3,3,3)$), as looking at the zeros of $J_\mathbf{a}(t)$ when $t=2$, we have the following real (in fact, rational) Waring decomposition of $F$ with $28$ linear forms.
$$-725760 X^3Y^3Z^3=252[(X-Y+Z)^9+(X+Y-Z)^9-(X-Y-Z)^9-(X+Y+Z)^9]$$
$$+2[(X-Y+2Z)^9+(X+Y-2Z)^9-(X-Y-2Z)^9-(X+Y+2Z)^9]$$
$$+2[(X-2Y+Z)^9+(X+2Y-Z)^9-(X-2Y-Z)^9-(X+2Y+Z)^9]$$
$$+2[(2X-Y+Z)^9+(2X+Y-Z)^9-(2X-Y-Z)^9-(2X+Y+Z)^9]$$
$$-(X-2Y+2Z)^9-(X+2Y-2Z)^9+(X-2Y-2Z)^9+(X+2Y+2Z)^9$$
$$-(2X-Y+2Z)^9-(2X+Y-2Z)^9+(2X-Y-2Z)^9+(2X+Y+2Z)^9$$
$$-(2X-2Y+Z)^9-(2X+2Y-Z)^9+(2X-2Y-Z)^9+(2X+2Y+Z)^9~,$$which shows $\rank_\mathbb{R}(X^3Y^3Z^3)\le \frac{1}{2}(4^3-2^3)=28$ (and also $\rank_\mathbb{Q}(X^3Y^3Z^3)\le28$).
\end{example}

When the least exponent is equal to $1$, we determine the rational Waring rank of a monomial.
\begin{corollary}\label{Qrank_a_n=1}
Let $\mathbf{X}^{\mathbf{a}}=X_0^{a_0}X_1^{a_1}\ldots X_n^{a_n}$ with each $a_i>0$ be any monomial in $\QQ[X_0,X_1,\ldots,X_n]$. If $\min(a_i)=1$, then $\rank_\QQ(\mathbf{X}^{\mathbf{a}})=\frac{1}{2}\prod_{i=0}^n(a_i+1)$.
\end{corollary}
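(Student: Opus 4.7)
The plan is to sandwich $\rank_\QQ(\mathbf{X}^\mathbf{a})$ between two matching bounds, one coming from the upper bound just established over $\QQ$ and the other from the known real rank computed in \cite{CKOV}. Both bounds turn out to equal $\tfrac{1}{2}\prod_{i=0}^n(a_i+1)$ exactly because some exponent is $1$.

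For the upper bound, I would invoke Remark \ref{Qrank}, which records that all constructions used for Theorem \ref{main_thm} (the family $J_\mathbf{a}(t)$, the saturatedness, the counting of points, the invocation of the Apolarity Lemma) remain valid over $\QQ$ provided one chooses $t \in \QQ \setminus \{0,\pm 1\}$ avoiding the finite bad set. This yields
\[
\rank_\QQ(\mathbf{X}^\mathbf{a}) \;\le\; \tfrac{1}{2}\Bigl(\prod_{i=0}^n(a_i+1)-\prod_{i=0}^n(a_i-1)\Bigr).
\]
Since $\min(a_i)=1$, at least one factor $a_i-1$ vanishes, so $\prod_{i=0}^n(a_i-1)=0$, and the right-hand side collapses to $\tfrac{1}{2}\prod_{i=0}^n(a_i+1)$.

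For the matching lower bound, I would use the trivial chain $\rank_\CC(F) \le \rank_\RR(F) \le \rank_\QQ(F)$, which holds because any Waring decomposition valid over a smaller field is automatically valid over a larger one. By \cite[Theorem 3.5]{CKOV} (quoted in the introduction), when $\min(a_i)=1$ one has $\rank_\RR(\mathbf{X}^\mathbf{a}) = \tfrac{1}{2}\prod_{i=0}^n(a_i+1)$. Hence
\[
\rank_\QQ(\mathbf{X}^\mathbf{a}) \;\ge\; \rank_\RR(\mathbf{X}^\mathbf{a}) \;=\; \tfrac{1}{2}\prod_{i=0}^n(a_i+1),
\]
and combining with the previous paragraph gives equality.

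There is essentially no obstacle here: the corollary is a direct consequence of the main theorem (in its rational version, Remark \ref{Qrank}) together with the already-known real rank from \cite{CKOV}. The only subtlety worth mentioning explicitly is that Remark \ref{Qrank} excludes only a finite set of parameters $t$, so a suitable $t\in\QQ$ producing a rational apolar set always exists; this is what ensures the upper bound is attained already over $\QQ$, not merely over $\RR$.
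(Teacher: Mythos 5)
Your proposal is correct and follows essentially the same route as the paper: the lower bound $\rank_\QQ \ge \rank_\RR = \tfrac{1}{2}\prod_{i=0}^n(a_i+1)$ from \cite[theorem 3.5]{CKOV}, combined with the rational upper bound of Remark \ref{Qrank}, which collapses to the same value since $\prod_{i=0}^n(a_i-1)=0$ when some $a_i=1$. No gaps; your extra remark about choosing a rational $t$ outside the finite bad set is exactly the point Remark \ref{Qrank} already covers.
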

\begin{proof}
First, note that, when $\min(a_i)=1$, $$\rank_\QQ(\mathbf{X}^{\mathbf{a}})\ge\rank_\RR(\mathbf{X}^{\mathbf{a}})=\frac{1}{2}\prod_{i=0}^n(a_i+1),$$ where the real rank is determined by \cite[theorem 3.5]{CKOV}. So, the assertion comes by establishing $\rank_\QQ(\mathbf{X}^{\mathbf{a}})\le \frac{1}{2}\prod_{i=0}^n(a_i+1)$ by (\ref{Our_bd_rank_over_QQ}).
\end{proof}

We present an implementation for computing a real apolar set and a real Waring decomposition of any given monomial via the method developed in this article as Macaulay2 code and we execute it for some cases.
\medskip

{\footnotesize
\begin{Verbatim}[commandchars=&!$]
Macaulay2, version 1.16
with packages: &colore!airforceblue$!ConwayPolynomials$, &colore!airforceblue$!Elimination$, &colore!airforceblue$!IntegralClosure$, &colore!airforceblue$!InverseSystems$, 
               &colore!airforceblue$!LLLBases$, &colore!airforceblue$!PrimaryDecomposition$, &colore!airforceblue$!ReesAlgebra$, &colore!airforceblue$!TangentCone$

&colore!magenta$!-- define functions to generate the polynomials F and G, the ideal J, and the points of V(J)$
&colore!blue$!i1 :$ fun:=(R,knd,ind,jnd,t)->(R_ind^2-t^(2*knd)*R_jnd^2);
&colore!blue$!i2 :$ F:=(R,a,ind,jnd,t)->(
		&colore!bleudefrance$!product$ &colore!purple$!for$ l &colore!purple$!from$ -&colore!bleudefrance$!floor$(a_ind/4)-&colore!bleudefrance$!floor$(a_jnd/4+1/2) 
		&colore!purple$!to$ &colore!bleudefrance$!floor$(a_ind/4+1/2)+&colore!bleudefrance$!floor$(a_jnd/4) &colore!purple$!list$ fun(R,l,ind,jnd,t));
&colore!blue$!i3 :$ G:=(R,a,ind,jnd,t)->(
		a':=&colore!purple$!for$ i &colore!purple$!from$ 0 &colore!purple$!to$ #a-1 &colore!purple$!list$ 2*&colore!bleudefrance$!floor$((a_i-1)/2)+1;
		R_ind^((a_ind+1)%2)*R_jnd^((a_jnd+1)%2)*F(R,a',ind,jnd,t));
&colore!blue$!i4 :$ genJ:=(R,a,t)->(
		&colore!bleudefrance$!ideal flatten$ &colore!purple$!for$ i &colore!purple$!from$ 0 &colore!purple$!to$ #a-1 &colore!purple$!list$ &colore!purple$!for$ j &colore!purple$!from$ i+1 &colore!purple$!to$ #a-1 &colore!purple$!list$ G(R,a,i,j,t));
&colore!blue$!i5 :$ ptsgen:=(R,a,epos,t)->(
	emems:=&colore!bleudefrance$!positions$(a,&colore!bleudefrance$!even$);
	&colore!purple$!if$(#emems==0) &colore!purple$!then$ (
		bound:=&colore!purple$!for$ i &colore!purple$!from$ 0 &colore!purple$!to$ #a-1 &colore!purple$!list$ &colore!purple$!for$ j &colore!purple$!from$ -&colore!bleudefrance$!floor$(a_i/4) &colore!purple$!to$ &colore!bleudefrance$!floor$((a_i+2)/4) &colore!purple$!list$ j;
		ilist:=(&colore!purple$!for$ i &colore!purple$!from$ 0 &colore!purple$!to$ #bound-1 &colore!purple$!list$ 0)..(&colore!purple$!for$ i &colore!purple$!from$ 0 &colore!purple$!to$ #bound-1 &colore!purple$!list$ #bound_i-1);
		pospts:=&colore!purple$!for$ i &colore!purple$!from$ 0 &colore!purple$!to$ #ilist-1 &colore!purple$!list$ &colore!purple$!if$(&colore!bleudefrance$!product$ (ilist_i)==0) &colore!purple$!then$ 
			&colore!purple$!for$ j &colore!purple$!from$ 0 &colore!purple$!to$ #bound-1 &colore!purple$!list$ t^(bound_j_(ilist_i_j)) &colore!purple$!else$ continue;
		pm:=(&colore!purple$!for$ i &colore!purple$!from$ 0 &colore!purple$!to$ #a-2 &colore!purple$!list$ 0)..(&colore!purple$!for$ i &colore!purple$!from$ 0 &colore!purple$!to$ #a-2 &colore!purple$!list$ 1);
		pmlist:=&colore!purple$!for$ i &colore!purple$!from$ 0 &colore!purple$!to$ #pm-1 &colore!purple$!list$ &colore!purple$!for$ j &colore!purple$!from$ 0 &colore!purple$!to$ #pm_i-1 &colore!purple$!list$ (-1)^(pm_i_j);
		pts:=&colore!bleudefrance$!flatten$ &colore!purple$!for$ k &colore!purple$!from$ 0 &colore!purple$!to$ #pospts-1 &colore!purple$!list$ &colore!purple$!for$ i &colore!purple$!from$ 0 &colore!purple$!to$ #pmlist-1 &colore!purple$!list$(
		point:=(&colore!purple$!for$ j &colore!purple$!from$ 0 &colore!purple$!to$ #pospts_k-1 &colore!purple$!list$ 
			&colore!purple$!if$(&colore!purple$!not$ j==#pospts_k-1) &colore!purple$!then$ pospts_k_j*pmlist_i_j &colore!purple$!else$ pospts_k_j);
		&colore!purple$!if$(&colore!purple$!not$ #epos==0) &colore!purple$!then$ &colore!purple$!for$ i &colore!purple$!from$ 0 &colore!purple$!to$ #epos-1 do point=&colore!bleudefrance$!insert$(epos_i,0,point);
		point
		)
		)		
		&colore!purple$!else$(
			temp:=emems_(#emems-1);
			a':=&colore!bleudefrance$!drop$(a,{temp,temp});
			a'':=&colore!purple$!for$ i &colore!purple$!from$ 0 &colore!purple$!to$ #a-1 &colore!purple$!list$ &colore!purple$!if$(i==temp) &colore!purple$!then$ a_i-1 &colore!purple$!else$ a_i;
			ptsgen(R,a',{temp}|epos,t)|ptsgen(R,a'',epos,t)
		));
&colore!magenta$!-- the case of (3,3,3), take t=2 --$		
&colore!blue$!i6 :$ a={3,3,3},t=2, n=#a-1;
&colore!blue$!i7 :$ R = &colore!darkspringgreen$!QQ$[x_0..x_n];
&colore!blue$!i8 :$ J=genJ(R,a,t)
&colore!black$!o8 =$ &colore!bleudefrance$!ideal$(x_0^6-(21/4)*x_0^4*x_1^2+(21/4)*x_0^2*x_1^4-x_1^6,x_0^6-(21/4)*x_0^
      4*x_2^2+(21/4)*x_0^2*x_2^4-x_2^6,x_1^6-(21/4)*x_1^4*x_2^2+(21/4)*x_1^2*x_
      2^4-x_2^6)
&colore!black$!o8 :$ &colore!darkspringgreen$!Ideal$ of R
&colore!magenta$!-- list the points of our apolar set from the ideal J$	
&colore!blue$!i9 :$ ptlist=ptsgen(R,a,{},t)
&colore!black$!o9 =$ {{1, 1, 1}, {1, -1, 1}, {-1, 1, 1}, {-1, -1, 1}, {1, 1, 2}, {1, -1, 2},
      {-1, 1, 2}, {-1, -1, 2}, {1, 2, 1}, {1, -2, 1}, {-1, 2, 1}, {-1, -2, 1},
      {1, 2, 2}, {1, -2, 2}, {-1, 2, 2}, {-1, -2, 2}, {2, 1, 1}, {2, -1, 1},
      {-2, 1, 1}, {-2, -1, 1}, {2, 1, 2}, {2, -1, 2}, {-2, 1, 2}, {-2, -1, 2},
      {2, 2, 1}, {2, -2, 1}, {-2, 2, 1}, {-2, -2, 1}}
&colore!black$!o9 :$ &colore!darkspringgreen$!List$
&colore!blue$!i10 :$ r=#ptlist
&colore!black$!o10 =$ 28
&colore!blue$!i11 :$ C=&colore!darkspringgreen$!QQ$[c_0..c_(r-1)];
&colore!blue$!i12 :$ S=C[X_0..X_n];
&colore!magenta$!-- produce the linear forms L_i in the Waring decomposition \sum_i \lambda_i*L_i$
&colore!blue$!i13 :$ linlist=&colore!purple$!for$ i &colore!purple$!from$ 0 &colore!purple$!to$ r-1 &colore!purple$!list$ &colore!bleudefrance$!sum$ &colore!purple$!for$ j &colore!purple$!from$ 0 &colore!purple$!to$ n &colore!purple$!list$ ptlist_i_j*S_j;
&colore!blue$!i14 :$ eq=&colore!bleudefrance$!sum$ &colore!purple$!for$ i &colore!purple$!from$ 0 &colore!purple$!to$ r-1 &colore!purple$!list$ c_i*linlist_i^(&colore!bleudefrance$!sum$ a);
&colore!blue$!i15 :$ L=&colore!bleudefrance$!sub$(&colore!bleudefrance$!ideal$((&colore!bleudefrance$!coefficients$(eq-&colore!bleudefrance$!product$ &colore!purple$!for$ i &colore!purple$!from$ 0 &colore!purple$!to$ n &colore!purple$!list$ S_i^(a_i)))_1),C);
&colore!black$!o15 :$ &colore!darkspringgreen$!Ideal$ &colore!purple$!of$ C
&colore!blue$!i16 :$ gL=&colore!bleudefrance$!gens gb$ L;
&colore!black$!o16 :$ &colore!darkspringgreen$!Matrix$
&colore!magenta$!-- compute the coefficients \lambda_i in the Waring decomposition \sum_i \lambda_i*L_i$
&colore!blue$!i17 :$ clist=&colore!bleudefrance$!reverse$ &colore!purple$!for$ i &colore!purple$!from$ 0 &colore!purple$!to$ r-1 &colore!purple$!list$ -&colore!bleudefrance$!coefficient$(c_0^0,gL_i_0)/&colore!bleudefrance$!leadCoefficient$(gL_i_0)
&colore!black$!o17 =$ {1/2880, -1/2880, -1/2880, 1/2880, 1/362880, -1/362880, -1/362880,
      1/362880, 1/362880, -1/362880, -1/362880, 1/362880, -1/725760, 1/725760,
      1/725760, -1/725760, 1/362880, -1/362880, -1/362880, 1/362880, -1/725760,
      1/725760, 1/725760, -1/725760, -1/725760, 1/725760, 1/725760, -1/725760}
&colore!black$!o17 :$ &colore!darkspringgreen$!List$
&colore!magenta$!-- verify the monomial by retrieving it from the Waring decomposition \sum_i \lambda_i*L_i$
&colore!blue$!i18 :$ &colore!bleudefrance$!sum$ &colore!purple$!for$ i &colore!purple$!from$ 0 &colore!purple$!to$ r-1 &colore!purple$!list$ clist_i*linlist_i^(&colore!bleudefrance$!sum$ a)
&colore!black$!o18 =$ X_0^3*X_1^3*X_2^3
&colore!black$!o18 :$ S
&colore!magenta$!-- the case of (4,4,4,4), take t=3 in this time--$	
&colore!blue$!i19 :$ a={4,4,4,4},t=3, n=#a-1;
&colore!blue$!i20 :$ R = &colore!darkspringgreen$!QQ$[x_0..x_n];
&colore!blue$!i21 :$ J=genJ(R,a,t);
&colore!blue$!i22 :$ ptlist=ptsgen(R,a,{},t);
&colore!blue$!i23 :$ r=#ptlist
&colore!black$!o23 =$ 272
&colore!blue$!i24 :$ C=&colore!darkspringgreen$!QQ$[c_0..c_(r-1)];
&colore!blue$!i25 :$ S=C[X_0..X_n];
&colore!blue$!i26 :$ linlist=&colore!purple$!for$ i &colore!purple$!from$ 0 &colore!purple$!to$ r-1 &colore!purple$!list$ &colore!bleudefrance$!sum$ &colore!purple$!for$ j &colore!purple$!from$ 0 &colore!purple$!to$ n &colore!purple$!list$ ptlist_i_j*S_j;
&colore!blue$!i27 :$ eq=&colore!bleudefrance$!sum$ &colore!purple$!for$ i &colore!purple$!from$ 0 &colore!purple$!to$ r-1 &colore!purple$!list$ c_i*linlist_i^(&colore!bleudefrance$!sum$ a);
&colore!blue$!i28 :$ L=&colore!bleudefrance$!sub$(&colore!bleudefrance$!ideal$((&colore!bleudefrance$!coefficients$(eq-&colore!bleudefrance$!product$ &colore!purple$!for$ i &colore!purple$!from$ 0 &colore!purple$!to$ n &colore!purple$!list$ S_i^(a_i)))_1),C);
&colore!black$!o28 :$ &colore!darkspringgreen$!Ideal$ &colore!purple$!of$ C
&colore!blue$!i29 :$ gL=&colore!bleudefrance$!gens gb$ L;
&colore!black$!o29 :$ &colore!darkspringgreen$!Matrix$
&colore!blue$!i30 :$ clist=&colore!bleudefrance$!reverse$ &colore!purple$!for$ i &colore!purple$!from$ 0 &colore!purple$!to$ r-1 &colore!purple$!list$ -&colore!bleudefrance$!coefficient$(c_0^0,gL_i_0)/&colore!bleudefrance$!leadCoefficient$(gL_i_0);
&colore!blue$!i31 :$ &colore!bleudefrance$!sum$ &colore!purple$!for$ i &colore!purple$!from$ 0 &colore!purple$!to$ r-1 &colore!purple$!list$ clist_i*linlist_i^(&colore!bleudefrance$!sum$ a)
&colore!black$!o32 =$ X_0^4*X_1^4*X_2^4*X_3^4
&colore!black$!o18 :$ S

\end{Verbatim}
} \noindent




\end{document}